\documentclass[12pt]{amsart}
\usepackage{amssymb}
\usepackage{tikz-cd}
\usepackage[toc,page]{appendix}

\newtheorem{theorem}{Theorem}[section]
\newtheorem{lemma}[theorem]{Lemma}

\newtheorem{cor}[theorem]{Corollary}
\newtheorem{prop}[theorem]{Proposition}
\theoremstyle{definition}  
\newtheorem{example}[theorem]{Example}
\newtheorem{definition}[theorem]{Definition}

\newtheorem{remark}[theorem]{Remark}
\numberwithin{equation}{section}

\usepackage{pdflscape}

\usepackage{pgf}
\usepackage{tikz}
\usetikzlibrary{arrows,automata}
\usepackage[latin1]{inputenc}
\usepackage{verbatim}

\newcommand{\Z}{\mathbb Z}
\newcommand{\N}{\mathbb N}

\DeclareMathOperator{\Id}{Id}

\usepackage{color}

\newcommand{\ii}{{\mathfrak i}}
\usepackage{color}

\newcommand{\Aa}{{\mathcal A}}

\newcommand{\Cc}{{\mathcal C}}

\newcommand{\Ff}{{\mathcal F}}
\newcommand{\Gg}{{\mathcal G}}
\newcommand{\Hh}{{\mathcal H}}

\newcommand{\Mm}{{\mathcal M}}

\newcommand{\Rr}{{\mathcal R}}

\newcommand{\id}{{\mathbf 1}}

\newcommand{\R}{\mathbb{R}}
\newcommand{\Xe}{X_{eq}}

\newcommand{\aut}{{\mathfrak a}}
\newcommand{\aute}{\aut_{eq}}
\newcommand{\Xmax}{\Xe}
\newcommand{\pe}{\pi_{eq}}

\newcommand{\fb}[1]{\pe^{-1}(#1)}
\newcommand{\tpe}{\widetilde{\pe}}

\newcommand{\M}{\mathcal M}

\newcommand{\rst}{right-topological}

\newcommand{\im}{\mbox{\rm im}}

\newcommand{\Ef}{E^{fib}}

\newcommand{\Sfib}{\M^{fib}_0(X_\theta)}

\newcommand{\ev}{\mathrm{ev}}
\newcommand{\res}{\mathrm{res}}

\newcommand{\al}{\alpha}

\renewcommand{\a}{{\color{blue}\boxdot}}
\renewcommand{\c}{{\color{red}\boxminus}}
\renewcommand{\b}{{\color{orange}\boxplus}}

\newcommand{\stab}{\mathrm{stab}}
\newcommand{\tPhe}{\widetilde{\Phi_{eq}}}

\newcommand{\peq}{\pi_{eq}}

\newcommand{\Gf}{\Gg^{fib}} 

\newcommand{\cut}{\kappa}

\begin{document}
\title{When is the Ellis semigroup a complete conjugacy invariant?}
 \markboth{\hfill J. Kellendonk and R. Yassawi}{The Ellis semigroup as a complete invariant  \hfill }
\author{Johannes Kellendonk}
\address{Universit{\'e} Lyon 1, EC Lyon, INSA Lyon, Universit{\'e} Jean Monnet, CNRS, ICJ, UMR 5208, Lyon, France}
\email{kellendonk@math.univ-lyon1.fr}

\author{Reem Yassawi}
\address{ School of Mathematical Sciences , Queen Mary University of London, U.K.  
}
\email{r.yassawi@qmul.ac.uk}

\thanks{This work was supported by  the
   EPSRC  grant numbers EP/V007459/2. Reem Yassawi also thanks the CNRS for financial support for a longer visit.}

\subjclass[2010]{37B02, 37B10, 37B52,  20M10, 20M35. }

\begin{abstract} 
The Ellis semigroup of a topological dynamical system contains algebraic, topological and dynamical information. 
It is invariant under conjugacy. 
Despite this wealth of structure, two non-conjugate dynamical systems can have the same Ellis semigroup.
We identify a class of minimal dynamical systems inside which this cannot happen, that is, for which the Ellis semigroup is a complete conjugacy invariant.
\end{abstract}

\maketitle

\markboth{\hfill Johannes Kellendonk and Reem Yassawi}{The Ellis semigroup as a complete invariant \hfill }

\section{Introduction} {Let $T$ be an abelian group acting on a compact Hausdorff space $X$  by a continuous action $\sigma$. The
   Ellis semigroup $E(X)=E(X, \sigma,T)$ is   the compactification of the action with the topology of pointwise convergence.}
It is a right-topological monoid with an action $\sigma_E$ of $T$ given by left multiplication by $\sigma$. An {\em Ellis morphism}  is a factor map 
$\Phi: (E(X),\sigma_E,T) \rightarrow (E(X'), \sigma'_E,T)$ 
which is a monoid morphism.
Any factor map $\phi:(X, \sigma,T) \rightarrow (X', \sigma',T)$ between the dynamical systems induces an Ellis morphism. This suggests the question:
When is an Ellis morphism induced by 
a factor map? 
We identify in Theorem~\ref{thm-main} a class of topological dynamical systems for which this is always the case. 

Related to this question is the following: {When is an algebraic isomorphism between two Ellis semigroups an Ellis morphism?} Indeed, this work was motivated by the {fact} that the Thue-Morse shift and the 
Rudin-Shapiro shift have algebraically isomorphic Ellis semigroups, but that the shifts are not conjugate. A direct check that the algebraic isomorphism is continuous seems out of reach, since the Ellis semigroups are not tame and have very complicated topologies. As the class of dynamical systems for which the Ellis semigroup is a complete invariant contains the two shifts, our results show that the algebraic isomorphism between the Ellis semigroups of the Thue-Morse and the Rudin-Shapiro shift cannot be continuous. {To our knowledge, these questions have not been addressed in the literature.}

We focus in this work on dynamical systems $(X,\sigma,T)$ for which $X$ is compact Hausdorff, $T$ is abelian, and the action $\sigma$ is minimal and not distal. 
We include a section with examples of systems which satisfy at least some conditions of the class of systems to which our results can be applied.
In the second half of the paper we describe how to compute the algebraic structure of the Ellis semigroup for quasi-bijective primitive substitutions of constant length. This is a generalisation of earlier work on bijective substitutions \cite{ESBS}, and is required in order to describe the Ellis semigroup of the Rudin-Shapiro shift.

Let us give a rough overview of the strategy we use to extract from an 
Ellis morphism $\Phi:E(X)\to E(X')$ a factor map $\phi:X\to X'$ such that $\Phi=\phi_*$ (see Theorem~\ref{thm-main}).  
To exploit minimality, we restrict $\Phi$ to a minimal left ideal $L$ of $E(X)$, as $(L,\sigma_L)$ is a minimal component of $(E(X),\sigma_E)$, and $L'=\Phi(L)$  then defines a minimal component  of $(E(X'), \sigma'_E)$.   
Any factor map $\phi: (X, \sigma,T)\rightarrow (X', \sigma',T)$ between minimal systems is uniquely determined by its value at one point. In other words, given $x\in X$ and $x'\in X'$ there is at most one factor map $\phi$ such that $\phi(x) = x'$. Our aim is thus to obtain two such points $x$ and $x'$ from $\Phi$.

Recall that $\phi$ induces a factor map $\phi_{eq}: (\Xe, \delta,T)\rightarrow (X'_{eq}, \delta',T)$ between the maximal equicontinuous factors of the systems.  
If the system  has a  proximal pair (a pair of points which become arbitrarily close under the action of $T$), then there is a point $\xi\in \Xe$ whose fibre $\pe^{-1}(\xi)$ under the factor map $\pe:X\to \Xe$ contains this pair. We call such a $\xi$ a singular point. As factor maps preserve proximality, we see that if $\phi$ is a conjugacy then 
$\phi_{eq}$ identifies the singular points of $\Xe$ with those of $X'_{eq}$. 
To take advantage of this singular structure we make two essential assumptions. The first, which we call \emph{no extra spectrum} (NES), implies that the maximal equicontinuous factor of $(L,\sigma_L)$ is not larger than $\Xe$ (which in terms of continuous dynamical spectra means that the spectrum of $(L,\sigma_L)$ {equals} that of $(X,\sigma)$), and the second is that $\Phi_{eq}$ maps non-singular points to non-singular points. This latter condition is difficult to characterise in general, but we provide a sufficient criterion for it, namely that the singular points of $\Xe$ form a single orbit under the automorphism group of $(X,T)$.  In this case  we say that the system is a \emph{unique singular fibre} system (USF). 

Finally we need to transfer the information related to the image of points $x$ in singular fibres under the action of minimal idempotents of $L$.  Namely, we need each such point to separate the idempotents of $L$. Again this is difficult to guarantee in general, but if the automorphism groups are sufficiently rich, this is satisfied.

We summarise the contents of this paper. In Section~\ref{sec:preliminaries}, we set up notation, define some notions and describe the equicontinuous relation structure  of minimal ideals of $E(X)$. In Section~\ref{sec:main} we discuss when an Ellis morphism is induced by a factor map, and prove our main result, Theorem~\ref{thm-main}. In Section~\ref{sec:USF-systems}, we give classes of systems which will satisfy the conditions of Theorem~\ref{thm-main}. In Section~\ref{sec:Examples}, we discuss several examples, making use of the results in Section~\ref{ESQB}, where we give a complete algebraic description of the Ellis semigroup of quasi-bijective substitution shifts, needed for some of the examples in Section~\ref{sec:Examples}. Finally for the benefit of the readers (and authors!) we give a non-exhaustive glossary of notation in Section~\ref{sec:notation glossary}.

\section{Preliminaries}\label{sec:preliminaries}
\subsection{Dynamical systems}
Throughout this work, $T$ is an abelian group. 
We look at $T$-actions $ (X,\sigma,T)$ on a compact Hausdorff space $X$. If $T$ is understood we write $(X,\sigma)$ and if both $\sigma$ and $T$ are understood then we write $X$. If $X$ is metrisable and $d$ a metric on $X$ which generates its topology,  a pair of points $x,x'\in X$ are  {\em proximal} if  $\inf_{t\in T} d(\sigma^t(x),\sigma^t(x')) = 0$. This notion can also be formulated for Hausdorff compact spaces by using uniformity structures \cite{Auslander1988}. 
An action is \emph{distal} if the proximality relation is trivial in the sense that distinct points are never proximal. We make it a standing assumption that  our actions are not distal.

A dynamical system $(X, \sigma)$ is called  {\em equicontinous} if the family of homeomorphisms 
$\{\sigma^t : t\in T\}$ is equicontinuous.  If the action is transitive then this is the case if and only if, for any choice of $x_0\in X$ there is an abelian group structure on $X$ (denoted additively) such that $x_0$ is the identity element and $\sigma^t(x) = x+ \sigma^t(x_0)-x_0$. This group structure is topological.

An equicontinuous factor of $(X, \sigma)$ is a factor $\pi:(X, \sigma)\to (Y,\delta)$ such that $(Y,\delta)$ is equicontinuous. 
An equicontinuous factor of $(X,T)$ is {\em maximal} if any other equicontinuous factor of $(X,T)$ factors through it. Maximal equicontinuous factors (MEFs) always exist, and in this paper we will denote the maximal equicontinuous space by  $\Xe$  and the factor map by $\pe$. Given $\xi\in \Xe$ we call $\pe^{-1}(\xi)$ the fibre of $\xi$. 
It turns out that two proximal points are mapped to the same point under 
$\pe$, or stated differently, they belong to the same fibre, see, eg, \cite{ABKL}.

A point  $\xi \in \Xe$ is called {\em singular} if the fibre $\pe^{-1}(\xi)$ contains at least two proximal points, otherwise it is called regular. The assumption we make, that $(X,\sigma)$ is not distal, is to say that there is at  least one singular point in $\Xe$, and then of course, its orbit under $\delta$ consists of singular points. 
We choose for the group structure on $\Xe$ a neutral element which is a singular point and denote it by $0$. 

Any factor map $\phi: (X, \sigma,T)\rightarrow (X', \sigma',T)$ between minimal systems maps $\peq$-fibres of $(X,\sigma,T)$ to $\peq$-fibres of $(X', \sigma',T)$; this follows e.g.\ from \cite[Theorem 8, Chapter 9]{Auslander1988}. It hence 
induces a factor map $\phi_{eq}: (\Xe, \delta_X,T)\rightarrow (X'_{eq}, \delta_{X'},T)$, namely  $\phi_{eq}(\xi) = \pe(\phi(x)) - \pe(x)$ where $x$ is any point of $\pe^{-1}(\xi)$. 

 The {\em automorphism group} $\aut(X,\sigma)$ of a dynamical system $(X,\sigma)$ is the group, under composition, of all homeomorphisms of $X$ which commute with $\sigma$. Assuming minimality of 
$(X,\sigma)$ we denote $\aute:=\{\alpha_{eq}|\alpha\in \aut\}$ the corresponding automorphisms of $\Xe$. It can be seen as a subgroup of $\Xe$ or as the quotient group 
$\aut/\aut^{fib}$ where   $\aut^{fib}\subset \aut$ is the subgroup of $\aut$ of elements which preserve the fibres.

Let $\pe:(X,\sigma)\to (\Xe,\delta)$ be a maximal  equicontinuous factor. 
The {\em coincidence rank} $cr(\xi)$ of the fibre at $\xi\in \Xe$ is the largest possible cardinality that a subset of $\pe^{-1}(\xi)$ can have which only contains pairwise non-proximal elements. By minimality, $cr(\xi)$ does not depend on $\xi$ and we denote it by  $cr=cr(X)$.

\subsection{Matrix semigroups}\label{sec:matrix-semigroups} No semigroup discussed in this work has a zero element. 
A semigroup is called \emph{completely simple} if it does not admit a proper bilateral ideal and contains an idempotent. This implies that the semigroup is a disjoint union of isomorphic groups. The way that multiplication of  elements in different groups occurs is  encoded in a matrix. This is the result of the 
Rees-Suskevitch structure theorem which characterises completely simple semigroups as those which are isomorphic to {\em matrix semigroups} \cite{howie1995fundamentals}.   

Matrix semigroups are defined as follows.
Let $G$ be a group, let  $I$ and $\Lambda$ be  non-empty sets, and let   
$A = (a_{\lambda i})_{\lambda\in \Lambda,i\in I}$ be a  $\Lambda\times I$ matrix whose entries lie in $G$. Then the  {\em matrix semigroup} $M[G;I,\Lambda;A]$ 
 is the set $I\times G \times  \Lambda$ together with the multiplication
\begin{equation*} (i ,g,\lambda)(j,h,\mu) = (i, g a_{\lambda  j} h,\mu).\end{equation*}
The matrix $A$ is called the {\em sandwich matrix} and the group $G$ is called the {\em structure group}. The {\em little structure group}  of $M[G;I,\Lambda;A]$ is the subgroup $\Gamma$ of $G$ generated by the entries of the sandwich matrix $A$. \begin{example}[$\mathcal M_2$] \label{ex:ex}
The simplest matrix semigroup which is not orthodox, meaning that its idempotents do not form a subsemigroup, is obtained when $I$ and $\Lambda$ contain two elements, $G=S_2$ is the permutation group of two elements, and $A=\begin{pmatrix} e & e \\ e & \tau \end{pmatrix}$. Here $e$ is the identity and $\tau$ the transposition. This semigroup has $8$ elements of which $4$ are idempotents. We denote it by $\mathcal M_2$. Note that the little structure group of $\mathcal M_2$ coincides with the structure group $S_2$.
\end{example}

\subsection{Ellis semigroup} 
A compact  topological semigroup is a semigroup endowed with a compact topology, and 
such that  multiplication  $(x,y)\rightarrow xy$ is jointly continuous.
A compact right-topological semigroup is a semigroup endowed with a compact topology, and such that right multiplication  is continuous.  
Given a set $X$ we let $\Ff(X)$ denote the set of functions from $X$ to itself with the topology of pointwise convergence. Composition gives it the structure of a right-topological semigroup. If $X$ is compact then $\Ff(X)$ is a compact right-topological semigroup. It is  a compact topological semigroup if and only if $X$ is finite. 
Given a dynamical system $(X,\sigma, T)$, the family of homeomorphisms
$\{\sigma^t | t\in T\}$ is a subsemigroup of $\mathcal F(X)$. 
The closure of $\{\sigma^t | t\in T\}$, denoted $E(X, \sigma, T)$,  or simply $E(X)$ if the rest is understood, is still a semigroup, called the   {\em Ellis semigroup}  of the dynamical system. 
Since $X$ is compact 
$E(X, \sigma,T)$ is 
a compact \rst\ semigroup, by construction.  It is also a dynamical system, with the action $\sigma_E:E(X) \rightarrow E(X)$ defined by  $\sigma_E^t(f):= \sigma^{t}\circ f$. The system $(E(X), \sigma,T)$ is generally not minimal which is one of the reasons why we will work with a minimal left ideal $L$ of $E(X)$.

The Ellis semigroup is closely related to the proximality relation \cite[Chapter 3, Proposition 8]{Auslander1988}. 
Two points $x$ and $y$ are proximal if and only if there exists $f\in E(X)$ such that $f(x)=f(y)$. 
In particular we see that, given any idempotent $p\in E(X)$ and $x\in X$, the points $p(x)$ and $x$ are proximal.

Since the elements of $E(X)$ are limits of generalised sequences of powers of $\sigma$, the automorphism group, viewed as a subgroup of $\Ff(X)$, lies in the commutant of $E(X)$. 
 
\subsection{Morphisms of Ellis semigroups}

As an Ellis semigroup has an algebraic, a topological and a dynamical structure, the natural notion of a morphism between Ellis semigroups respects all the three.

\begin{definition}
An {\em Ellis  morphism} $\Phi: (E(X), \sigma_E)\rightarrow( E(X'), \sigma'_E)$ between the Ellis semigroups of two $T$-actions $(X,\sigma,T)$ and $(X',\sigma',T)$ is a topological  factor map which is a monoid morphism. More precisely, it satisfies \begin{enumerate}
\item $\Phi: E(X)\rightarrow E(X')$ is continuous,
\item $\Phi(fg) = \Phi(f) \Phi(g)$ and $\Phi (\id_X)= \id_{X'}$, 
\item
$\Phi(\sigma_E^{t}(f))= {\sigma'_E}^{t}\Phi(f)$ for each $t\in T$.
\end{enumerate}
 \end{definition}
If  such a morphism exists, then it is unique and we simply say that $(X',\sigma')$ is  related to $(X,\sigma)$ by an Ellis morphism. 
If $\Phi$  only satisfies (2) and (3) we call it an {\em algebraic morphism}. 

Any factor map  $\phi:(X, \sigma)\to (X',\sigma')$ induces an Ellis  morphism  
$\phi_*: E(X) \to E(Y)$ via $\phi_*(f)(y) = \phi(f(x))$ where $x$ is any pre-image of $y$ under $\phi$. The question we analyse here is the converse, namely when  an Ellis morphism is of the form $\Phi=\phi_*$ where $\phi$ is a factor map.

Given an Ellis morphism $\Phi:E\to E'$, 
to exploit minimality we restrict it to a minimal left ideal $L$. For that we
choose a minimal idempotent $e\in E(X)$ ($e$ is minimal in the sense that if $ep=pe=p$ for any other idempotent then $p=e$) and define $L=Ee$. 
While this and the definitions below depend on the choice of $e$, they do so only up to isomorphism. 

The minimal left ideal $L$ is a  compact subsemigroup and the action of $\sigma_E$ restricted to $L$ (which we denote by $\sigma_L$) is minimal \cite{Auslander1988}. 
Furthermore $\Gg := eL$ is the Rees structure group of the minimal bilateral ideal (the {\em kernel}) $\ker E$ of $E$ and we call it here simply the structure group\footnote{In the literature, the maximal equicontinuous factor of $(X,\sigma)$ is sometimes called the structure group. For almost automorphic systems our structure group reduces to that, but otherwise not.} 
 of $E$ or of the system. In general, $\Gg$ is not closed in $L$. A further group of interest is the little structure group $\Gamma$. As defined in Section~\ref{sec:matrix-semigroups},  it is the group generated by the matrix elements of the sandwich matrix of the matrix semigroup corresponding to  $\ker E$. It is also the intersection of the closure of the semigroup  $\langle J_{min}(E)\rangle $ generated by the minimal idempotents of $E$ with $\Gg$ \cite{ESBS}.

We define the same object in $E'$ using $e'=\Phi(e)$ as the idempotent, that is, we set $L'=E'e'$, $\Gg' = e'L'$ and the little structure group $\Gamma'$ as above. The following is straightforward, and can be found, eg, in \cite[Lemma 24]{BKY}.  
\begin{lemma}
Suppose  that $\Phi:E(X',\sigma')\rightarrow E(X,\sigma)$ is an Ellis morphism. Then 
$\Phi$ maps  $\Gg$ onto $\Gg'$ and $\Gamma$ onto $\Gamma'$.
\end{lemma}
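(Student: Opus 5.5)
Note first that the direction of $\Phi$ in the statement is reversed relative to the surrounding text. I will write $\Phi:E=E(X)\to E'=E(X')$, with $e'=\Phi(e)$, $L'=E'e'$ and $\Gg'=e'L'$ as above. The plan has three steps.

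\textbf{Step 1: $\Phi$ is surjective and $e'$ is a minimal idempotent.} Since $\Phi$ is a factor map of dynamical systems, it is surjective onto $E'$. This can also be seen directly: its image is compact (by continuity) and contains $\sigma'^t=\Phi(\sigma^t)$ for every $t\in T$, because $\Phi(\id_X)=\id_{X'}$ and $\Phi$ is equivariant. Hence the image is all of $E'$. As a surjective morphism, $\Phi$ maps the minimal bilateral ideal $\ker E$ onto a bilateral ideal of $E'$. That ideal is simple, being the homomorphic image of a simple semigroup, so it equals $\ker E'$.

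\textbf{Step 1, continued: minimality of $e'$.} This is the step I expect to require the most care. Clearly $e'$ is an idempotent lying in $\ker E'$. In a completely simple semigroup every idempotent is primitive, which gives minimality. To see that $\ker E'$ is completely simple, I would use the standard fact that the kernel of a compact right-topological semigroup is completely simple (Ellis). Alternatively, one can argue directly. Since $L$ is a minimal left ideal, $\Phi(L)=\Phi(E)\Phi(e)=E'e'=L'$. Moreover $L'$ is a minimal left ideal: if $K\subseteq L'$ is a left ideal, then $\Phi^{-1}(K)\cap L$ is a nonempty left ideal of $E$ contained in $L$, hence equals $L$, so $K=L'$. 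An idempotent in a minimal left ideal is minimal.

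\textbf{Step 2: $\Phi(\Gg)=\Gg'$.} We have $\Phi(\Gg)=\Phi(e)\Phi(L)=e'L'=\Gg'$.

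\textbf{Step 3: $\Phi(\Gamma)=\Gamma'$.} I would use the characterisation recalled above: $\Gamma=\overline{\langle J_{min}(E)\rangle}\cap\Gg$, and similarly for $E'$. The key points are:
\begin{itemize}
\item $\Phi$ maps minimal idempotents to idempotents in $\ker E'$, and these are minimal. So $\Phi(J_{min}(E))\subseteq J_{min}(E')$.
\item Conversely, each $q\in J_{min}(E')$ lies in $\ker E'=\Phi(\ker E)$. Its preimage in $\ker E$ is a closed subsemigroup, since preimages under a continuous morphism of a closed subsemigroup $\{q\}$ are closed subsemigroups. By Ellis's lemma this closed subsemigroup contains an idempotent, which is minimal because it lies in the kernel. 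Hence $\Phi(J_{min}(E))=J_{min}(E')$.
\item Consequently $\Phi(\langle J_{min}(E)\rangle)=\langle J_{min}(E')\rangle$. By compactness and continuity, $\Phi$ maps the closure onto the closure.
\item Intersecting with $\Gg$, we get $\Phi(\Gamma)\subseteq\Gamma'$.
\end{itemize}
For the reverse inclusion, take $\gamma'\in\Gamma'$ and choose $x$ in the closure of $\langle J_{min}(E)\rangle$ with $\Phi(x)=\gamma'$. Then $exe$ still lies in that closure, because $e$ is in $J_{min}(E)$ and right multiplication is continuous (for left multiplication by $e$ I would instead use the Rees description: $\Gamma=e\langle J_{min}\rangle e$). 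Also $exe\in\Gg$, and $\Phi(exe)=e'\gamma'e'=\gamma'$.

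An alternative route for Step 3 is to work purely algebraically with the Rees matrix coordinates: $\Gamma$ is generated by elements of the form $epe$ with $p\in J_{min}(E)$, and $\Phi(epe)=e'\Phi(p)e'$. Together with the surjectivity of $\Phi$ on $J_{min}$ from Step 3, this gives equality immediately.
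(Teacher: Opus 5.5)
The paper gives no proof of this lemma; it calls it straightforward and defers to Lemma 24 of [BKY]. Your Steps 1 and 2 are the natural argument and are correct: surjectivity of $\Phi$, $\Phi(\ker E)=\ker E'$, $\Phi(L)=E'e'=L'$ with $L'$ minimal, and $\Phi(\Gg)=\Phi(e)\Phi(L)=e'L'=\Gg'$. Step 3, however, has two justifications that do not hold as written.

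\textbf{The kernel is not closed.} To show $J_{min}(E')\subseteq\Phi(J_{min}(E))$ you take the preimage of $q$ inside $\ker E$ and call it a closed subsemigroup. The kernel of a compact right-topological semigroup is in general not closed. For example, the smallest ideal of $\beta\Z$, which is the Ellis semigroup of the universal $\Z$-flow, is not closed. So $\Phi^{-1}(q)\cap\ker E$ need not be compact, and the Ellis--Numakura lemma does not apply. The full preimage $\Phi^{-1}(q)$ is closed, but an idempotent found there need not be minimal. The repair is easy. Pick $k\in\ker E$ with $\Phi(k)=q$. Then $Ek$ is a minimal left ideal containing $k$, and it is closed as the image of $E$ under the continuous map $\rho_k$. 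Hence $\Phi^{-1}(q)\cap Ek$ is a nonempty closed subsemigroup, so it contains an idempotent, which is minimal. Alternatively, argue purely algebraically. The element $k$ lies in a maximal subgroup $H_p$ of the completely simple semigroup $\ker E$. Then $\Phi(H_p)$ is a group with identity $\Phi(p)$ containing the idempotent $q$, so $q=\Phi(p)$.

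\textbf{The reverse inclusion via closures.} There you need $exe\in\overline{\langle J_{min}(E)\rangle}$ for $x$ merely in that closure. Right continuity handles $xe$, but $\lambda_e$ is not continuous, so $e\,\overline{\langle J_{min}(E)\rangle}\subseteq\overline{\langle J_{min}(E)\rangle}$ is unjustified. Invoking $\Gamma=e\langle J_{min}\rangle e$ does not rescue this, since $x$ is not known to lie in $\langle J_{min}(E)\rangle$; it simply switches to your algebraic route. Make that route the proof. Normalise the sandwich matrix at $e$, so that $\Gamma=e\langle J_{min}(E)\rangle e$; this is the subgroup of $\Gg$ generated by the elements $epe$ with $p\in J_{min}(E)$. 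The restriction $\Phi|_{\Gg}$ is a group homomorphism with $\Phi(e)=e'$. With the repaired equality $\Phi(J_{min}(E))=J_{min}(E')$ you get $\Phi(\Gamma)=e'\langle\Phi(J_{min}(E))\rangle e'=e'\langle J_{min}(E')\rangle e'=\Gamma'$. With these two fixes the argument is complete. The closure-based forward inclusion is then correct but unnecessary.
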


\subsection{The fibre structure of $L$ and $\Gg$}\label{sec-2.4}

Let $\ev_x$ be evaluation of a function $f$ at the point $x$. We will use  the evaluation  map $\ev_x$ to evaluate  functions $f$  belonging to a variety of spaces; if needed, we will state the domain of $\ev_x$.
For a minimal equicontinuous system  $(Y,\delta)$ and w.r.t.\ the group structure on $Y$, whose neutral element we denote by $0$,
$\ev_{0} : E(Y) \to Y$ is an isomorphism of topological groups  \cite[Chap.~3, Theorem~6]{Auslander1988} (this hinges on our assumption that $T$ is abelian).
Let $x\in\pe^{-1}(0)$. By definition of the induced map ${\pe}_*:E(X)\to E(\Xe)$ the diagram
\begin{equation}\label{eq-commuting1}
\begin{matrix}
L & \stackrel{\ev_x}\to & X \\
{\pe}_* \downarrow & & \downarrow \pe \\
E(\Xe)  &\stackrel{\ev_0} \to & \Xe 
\end{matrix}
\end{equation}
commutes. Define
$$\tpe : = \ev_0\circ {\pe}_*:L\to \Xe.$$
Then we obtain the exact sequence of compact semigroups
$$L^{fib} \hookrightarrow L \stackrel{\tpe}\twoheadrightarrow \Xe$$
where $L^{fib}\subset L$ is the sub-semigroup of elements  
which preserve a $\peq$-fibre, and hence all $\peq$-fibres.
We can view an element $f\in L^{fib}$ as a function on $\Xe$ 
whose value at $\xi\in \Xe$ is given by the restriction of $f$ to  $e\pe^{-1}(\xi)$. As $L^{fib}$ commutes with the automorphism group we find that $\tilde f$ must satisfy the following so-called covariance condition
\begin{equation}\label{eq-cov-0}
\alpha \tilde f(\xi) \alpha^{-1} = \tilde f(\alpha_{eq}(\xi)),\quad \forall \alpha\in \aut.
\end{equation}
Let us define
\begin{equation}\label{eq-cov} 
Cov(\prod_{\xi\in \Xe} \Ff(e\pe^{-1}(\xi))):=\{(\tilde f(\xi))_\xi :\tilde f(\xi)\in \Ff(e\pe^{-1}(\xi)), \tilde f\: \mbox{ covariant}\}
\end{equation} 
which we equip with pointwise multiplication and the topology of pointwise convergence. We have an injective continuous semigroup morphism
$\mu:L^{fib}\to Cov(\prod_{\xi\in \Xe} \Ff(e\pe^{-1}(\xi))),$ defined by
\begin{equation}\label{def:mu}
\mu(f) = \tilde f,\quad \tilde f(\xi)(x) = f(x).\end{equation}Note that  $Cov(\prod_{\xi\in \Xe} \Ff(e\pe^{-1}(\xi)))$ is a  compact right-topological group.
 
Define $\Gg^{fib}:=e L^{fib}=\Gg\cap L^{fib}$, a subgroup of the semigroup $L^{fib}$. 
\begin{lemma} 
$\Gg^{fib}$ is a compact right-topological group. If the coincidence rank is finite, it is a topological group. 
\end{lemma}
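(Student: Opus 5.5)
We show three things in turn: $\Gg^{fib}$ is a group, it is compact, and it is right-topological (topological when $cr<\infty$).

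\emph{Group structure and compactness.} Since $\Gg=eL$ is a group with identity $e$, and $L^{fib}$ is a subsemigroup containing $e$ (idempotents map fibres to fibres because $\tpe$ is a semigroup morphism into the group $\Xe$), the set $\Gg^{fib}=\Gg\cap L^{fib}=\ker(\tpe|_{\Gg})$ is a subgroup of $\Gg$. Here I use that $\tpe$ restricted to $\Gg$ is a group morphism.

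For compactness, the plan is to show $\Gg^{fib}=eL^{fib}$ is closed in $L$. Left multiplication by $e$ need not be continuous, so instead I use injectivity of $\mu$. For $f\in L^{fib}$, the element $ef$ is determined by the values $\tilde f(\xi)$ restricted to $e\pe^{-1}(\xi)$. The map $f\mapsto ef$ on $L^{fib}$ corresponds under $\mu$ to a pointwise operation in $\prod_\xi\Ff(e\pe^{-1}(\xi))$: compose with $e$ on each fibre, where $e$ acts as a fixed map of the fibre. Composition on the left by a fixed map is continuous in the pointwise topology on $\Ff$ only when that map is continuous, which we do not know. So the better route is to identify $\Gg^{fib}$ with its image $\mu(\Gg^{fib})$. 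I claim this image is exactly the set of covariant families $\tilde f$ in the closure of $\mu(L^{fib})$ whose components are bijections of $e\pe^{-1}(\xi)$. Elements of $\Gg^{fib}$ act bijectively on $eX$ fibrewise, since $g g^{-1}=e$ and $e$ is the identity on $eX$. Conversely, I argue that $\mu(eL^{fib})=\mu(L^{fib})$ restricted to the sets $e\pe^{-1}(\xi)$. Then $\Gg^{fib}\cong\mu(L^{fib})$ viewed as functions on $eX$, which is a continuous image of the compact $L^{fib}$ (closed in $L$, being a fibre of the continuous $\tpe$). This gives compactness. The topology matches because the pointwise topology on $eX$ is exactly what $\mu$ records.

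\emph{Right-topology and the finite-rank case.} Right multiplication is continuous because $\Gg^{fib}\subset L$ and $L$ is right-topological. If $cr<\infty$, each $e\pe^{-1}(\xi)$ consists of pairwise non-proximal points (points of $eX$ in one fibre that are proximal are equal, since $e$ fixes $eX$ and a minimal idempotent collapses proximal pairs in $eX$), so it has at most $cr$ elements. Then $\prod_\xi\Ff(e\pe^{-1}(\xi))$ is a product of finite discrete semigroups, hence a compact topological semigroup. Therefore $\mu(\Gg^{fib})$ has jointly continuous multiplication. A compact semitopological group with jointly continuous multiplication is a topological group: continuity of inversion follows, for example, from Ellis' theorem or directly from compactness and the closed graph of inversion.

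The main obstacle is the identification of $\Gg^{fib}$ with its image on $eX$, i.e.\ showing that the relevant topology on $\Gg^{fib}$ agrees with the pointwise topology on the finite sets $e\pe^{-1}(\xi)$. I would first try to use that each $g\in\Gg$ satisfies $g=ge$, so $g$ is determined by its values on $eX$, and that convergence in $L$ of such elements is equivalent to convergence on $eX$ via continuity of right multiplication by $e$.
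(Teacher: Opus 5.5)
The group-structure part and the finite-rank part (finite discrete fibres $e\pe^{-1}(\xi)$, joint continuity in the product, continuity of inversion) are fine, and your handling of inversion is more careful than the paper's. The gap is in the compactness step. The identification ``$\mu(eL^{fib})=\mu(L^{fib})$ restricted to the sets $e\pe^{-1}(\xi)$'' is false, and so is its consequence ``$\Gg^{fib}\cong\mu(L^{fib})$ viewed as functions on $eX$, a continuous image of $L^{fib}$''.

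Every $f\in L$ satisfies $f=fe$, so restriction to $eX$ is injective on $L$. Hence $\mu(L^{fib})$, viewed as functions on $eX$, is a faithful copy of all of $L^{fib}$. But $L^{fib}$ is strictly larger than $\Gg^{fib}$: an element of $L^{fib}$ maps $e\pe^{-1}(\xi)$ into $\pe^{-1}(\xi)$, not necessarily into $e\pe^{-1}(\xi)$. For instance, any idempotent $p\in J_L$ with $p\neq e$ lies in $L^{fib}$, since $\tpe(p)=0$. It does not lie in $eL$, since otherwise $p\in eLe=\Gg$ would be an idempotent of a group, forcing $p=e$. Such $p$ exist because the system is not distal.

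So your argument only re-proves compactness of $L^{fib}$. The surjection $L^{fib}\to\Gg^{fib}$ you actually need is $f\mapsto ef$, and its continuity is exactly what you correctly said is unavailable. Your closing remark about $g=ge$ shows only that the topology of $L$ on $\Gg^{fib}$ is pointwise convergence on $eX$. It does not show that $\Gg^{fib}$ is closed.

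The missing step is closedness of $\Gg^{fib}$ in $L^{fib}$. Your intermediate claim points the right way, and it is essentially the paper's proof. An element $f\in L^{fib}$ belongs to $\Gg^{fib}$ if and only if $f$ maps each $e\pe^{-1}(\xi)$ into itself: then $f=fe$ has image in $eX$, so $ef=f$. Equivalently, $\mu(\Gg^{fib})$ is the intersection of the compact set $\mu(L^{fib})$ with $Cov(\prod_{\xi}\Ff(e\pe^{-1}(\xi)))$.

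What must be checked is that this condition survives limits. When $cr<\infty$ this is immediate. Each $e\pe^{-1}(\xi)$ is finite, hence closed in $X$. So if $g_\nu\in\Gg^{fib}$ converge to $f$ in $L$, then $f(x)=\lim g_\nu(x)\in e\pe^{-1}(\xi)$ for every $x\in\pe^{-1}(\xi)$. Hence $ef=f$ and $f\in\Gg^{fib}$.

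Without finiteness the sets $e\pe^{-1}(\xi)$ need not be closed. You would then need an additional argument for the compactness claim in the general case.
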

\begin{proof} $\mu(\Gg^{fib})$ is the intersection of $\mu(L^{fib})$ with $Cov(\prod_{\xi\in \Xe} \Ff(e\pe^{-1}(\xi)))$, 
two compact right-topological groups. 
Moreover each $e\pe^{-1}(\xi)$ contains $cr$ elements. Hence $Cov(\prod_{\xi\in \Xe} \Ff(e\pe^{-1}(\xi)))$ is a compact topological semigroup if $cr$ is finite.
In that case $\mu(\Gg^{fib})$ and hence $\Gg^{fib}$ are also compact topological groups.
\end{proof}
As the restriction of $\tpe$ to $\Gg$ is still surjective onto $\Xe$, we obtain the exact sequence of groups
\begin{equation}\label{eq-SES}
\Gg^{fib} \hookrightarrow \Gg \twoheadrightarrow \Xe.
\end{equation} 
Note that, although  $\Gg^{fib}$, $\Xe$ are compact, $\Gg$ is typically not compact. 

Let $H$ be a subgroup of $G$. 
We denote the smallest normal subgroup of $G$ which contains $H$ by $\text{ncl}_G(H)$. If $G$ is a right-topological group we denote  its topological closure by 
$\overline{\text{ncl}_G(H)}$, and
we call $\overline{\text{ncl}_G(H)}$ the topological  normal closure of $H$ in $G$. Recall that $\Gamma$ is the little structure group of $\ker(E)$.

\begin{definition}\label{def:NES}
$(X,\sigma,T)$ {\em has no extra spectrum} (is NES) if the topological normal closure of $\Gamma$ in $\Gg$ is $\Gg^{fib}$, i.e.\ $\overline{\text{ncl}_{\Gg}(\Gamma)} = \Gg^{fib}$.
\end{definition}
 Corollary~\ref{cor:NES-equiv} makes this terminology clear.

\newcommand{\ol}{e_{eq}}

\subsection{The maximal equicontinuous factor of $(L,\sigma_L)$}
We investigate the maximal equicontinuous factor $\pe:L\to L_{eq}$ of $(L,\sigma_L)$. For better distinction we denote the neutral element of $L_{eq}$ by $\ol$ and fix the group structure on $L_{eq}$ such that $\pe(e)=\ol$. 
\begin{theorem}\label{thm:mef}  There is a closed normal subgroup $H$ of $\Gg$ which contains $\Gamma$ such that the semigroup morphism
$$L\ni f \mapsto ef\:mod\:H \in \Gg/H$$
is a maximal equicontinuous factor of $(L,\sigma_L)$.
\end{theorem}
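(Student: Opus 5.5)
The plan is to build the factor map $L\to\Gg/H$ directly and then prove that it is maximal. Set $\psi:L\to L_{eq}$ to be the maximal equicontinuous factor of $(L,\sigma_L)$, normalised so that $\psi(e)=\ol$. We read off $H$ from $\psi$, using the semigroup structure of $L$. Since $L_{eq}$ is a compact abelian group and $(L,\sigma_L)$ is minimal, Auslander's result gives the following. For $f\in L$, right multiplication $R_f:g\mapsto gf$ is continuous and commutes with $\sigma_L$, because $\sigma_L^t(g)f=\sigma^t gf$. Hence $R_f$ is an endomorphism of the minimal system $(L,\sigma_L)$. Every such endomorphism induces a map on the maximal equicontinuous factor, and that map is a translation: $\psi(gf)=\psi(g)+c(f)$ for some $c(f)\in L_{eq}$. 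Evaluating at $g=e$ gives $c(f)=\psi(ef)$. It follows that $\psi(gf)=\psi(g)+\psi(f)$ whenever $ef=f$, and in general $\psi(gf)=\psi(g)+\psi(ef)$.

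Next we take $g=e$, $f=e$ in $\psi(gf)=\psi(g)+\psi(ef)$, which gives $\psi(e)=\psi(e)+\psi(e)$. This is consistent with $\psi(e)=\ol=0$. The first main step is to check that $\psi(f)=\psi(ef)$ for all $f\in L$. Every $f\in L$ satisfies $f=fe$, so $\psi(f)=\psi(fe)=\psi(f)+\psi(ee)=\psi(f)$, which gives no information. Instead we use that $u f$ and $f$ are proximal in $L$ for any idempotent $u$. Indeed, $u$ acts on the left and $E$ acts on $L$ by left multiplication with $\sigma_L$-equivariant limits, so $uf$ and $f$ are proximal, since $u(uf)=u(f)$. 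Proximal points have the same image under $\psi$, so $\psi(f)=\psi(ef)$. Consequently $\psi$ factors through $f\mapsto ef\in\Gg$, and $\psi|_\Gg:\Gg\to L_{eq}$ is a group homomorphism, because $\psi(gh)=\psi(g)+\psi(eh)=\psi(g)+\psi(h)$. It is surjective, since $\psi(L)=\psi(eL)$.

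We set $H:=\ker\psi|_\Gg$. This is a normal subgroup of $\Gg$ because $L_{eq}$ is abelian. It is closed in the topology induced on $\Gg$ from $L$, since $\psi$ is continuous. The map $L\ni f\mapsto ef \bmod H$ is then identified with $\psi$ via $\Gg/H\cong L_{eq}$, and the map is a semigroup morphism because $e(fg)=(ef)(eg)$ modulo $H$. This uses $\psi(fg)=\psi(f)+\psi(g)$, which we already derived. To show $\Gamma\subset H$, recall that $\Gamma$ is generated by the sandwich entries. These entries are products $uv$ of minimal idempotents $u,v$, projected into $\Gg$, in the form $e\,uv\,e$. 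For an idempotent $u\in L$ we have $\psi(u)=\psi(u^2)=2\psi(u)$, so $\psi(u)=0$. Idempotents in other minimal left ideals are handled by right multiplication by $e$: $ue\in L$ is an idempotent up to the group structure. More directly, $\psi(e u e)$ is computed via proximality of $ue$ and $e\,ue$. Hence $\psi$ vanishes on every generator of $\Gamma$.

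I expect the main obstacle to be making this identification rigorous. The difficulty is that $\Gg$ and the quotient $\Gg/H$ must carry the topology induced from $L_{eq}$, rather than the topology of $\Gg$ as a subset of $L$. Compactness of $\Gg/H$ and equicontinuity should then be read off from $L_{eq}$.
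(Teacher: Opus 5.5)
Your proof is correct and rests on the same ingredients as the paper's. These are the proximality of $f$ and $uf$ in $(L,\sigma_L)$ for every idempotent $u\in E(X)$ (because $\lambda_u\in E(L)$), multiplicativity of $\psi$, and $H:=\ker\psi|_{\Gg}$.

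The genuine difference is how you get multiplicativity.
\begin{itemize}
\item You note that $R_f\colon g\mapsto gf$ is an endomorphism of the minimal system $(L,\sigma_L)$. By maximality, $\psi\circ R_f$ factors through $\psi$ via an equivariant self-map of $L_{eq}$, and such a map must be a translation. This gives $\psi(gf)=\psi(g)+\psi(ef)$ with no input from idempotents. Proximality enters only afterwards, to replace $\psi(ef)$ by $\psi(f)$.
\item The paper instead writes $f=\lim\sigma^{t_\nu}p$ with $p\in J_L$ and $pg=g$. It then computes $\pe(fg)=\lim\sigma^{t_\nu}\pe(pg)$, using proximality inside the computation to get $\pe(pg)=\pe(p)\pe(g)$.
\end{itemize}
Your route is more conceptual and isolates exactly where proximality is needed. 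The paper's is more hands-on: it uses only continuity of right multiplication, density of the orbit of $p$, and the proximality lemma, and never appeals to functoriality of the maximal equicontinuous factor. Its density argument is, in effect, the proof that equivariant self-maps of a minimal rotation are translations.

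A few small repairs:
\begin{itemize}
\item In the $\Gamma$ step, drop the sentence that $ue$ is ``an idempotent up to the group structure''. It need not be idempotent; for instance, in $\mathcal M_2$ a product of two idempotents can fail to be idempotent.
\item What you actually need is the chain $\psi(euve)=\psi(uve)=\psi(ve)=\psi(e)=0$ for $u,v\in J_{min}$. This is three applications of your proximality lemma, and it is exactly the paper's ``$\pe(g)=\pe(ge)=\pe(e)$ for $g\in\langle J_{min}\rangle e$''.
\item If you use the description $\Gamma=\overline{\langle J_{min}\rangle}\cap\Gg$, add the closure step via continuity of $\psi$ and of right multiplication by $e$, as the paper does.
\item $H$ is normal simply because it is a kernel; abelianness of $L_{eq}$ is not needed for that.
\item The identity $e(fg)=(ef)(eg)$ holds exactly, not just modulo $H$, since $fe=f$ for $f\in L=Ee$.
\end{itemize}

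Finally, the topological issue you flag is not a real obstacle. $\Gg/H$ is meant to carry the topology transported from $L_{eq}$. This coincides with the quotient topology from $L$ under $f\mapsto ef \bmod H$, because that map equals $\psi$, and a continuous surjection from compact $L$ onto Hausdorff $L_{eq}$ is a quotient map. The paper takes this for granted in the same way.
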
 
\begin{proof}
We first show that $\pe(g)=\ol$ for all $g\in \Gamma$. 
To show the claim, let $p\in J_{min}$ and let $(\sigma^{t_\nu})$ be a converging net such that $\lim {\sigma^{t_\nu}} = p$ in the topology of $E(X)$. 
Then $\lim {\sigma_L^{t_\nu}} = \lambda_p$, the left multiplication with $p$, in the topology of the Ellis semigroup $E(L)$ of $L$. As $p$ is an idempotent we have $\lambda_p f = \lambda_p pf$ for any $f\in L$. This shows that $f$ and $pf$ are proximal in $(L,\sigma_L)$ for all $f\in L$.
 As the equicontinuous structure relation of $(L,\sigma_L)$ contains the proximality relation \cite{Auslander1988} we find that $\pe(pf) = \pe(f)$ for  all $f\in L$. This implies that $\pe(g) = \pe(ge) = \pe(e) = \ol$ for all $g\in \langle J_{min} \rangle e$.
As $\pe$ is continuous this extends to the closure of $\langle J_{min} \rangle e$ in $L$ and hence $\pe(g) = \ol$ for all $g\in \Gamma$.

We claim that $\pe$ is multiplicative: Let $f,g\in L$. There is an idempotent $p\in L$ such that $pg=g$. As $L$ is closed and minimal there is a net $(t_\nu)_\nu$ such that $f=\lim \sigma^{t_\nu}p$. Thus
$$\pe(fg) =  \lim \pe(\sigma^{t_\nu} p g) =  \lim \sigma^{t_\nu} \pe(p g)$$
where we used continuity and $T$-equivariance of $\pe$.
Using $ \pe(p g) = \ol \pe(g) = \pe(p)\pe(g)$ we obtain
$$\pe(fg) =\lim \sigma^{t_\nu} \pe(p) \pe(g) = \pe(f)\pe(g). 
$$

The above shows that $\pe(f) = \pe(ef)$ for all $f\in L$ and that 
the restriction of $\pe$ to $\Gg$ is a continuous group epimorphism. 
 Thus there is a closed normal subgroup $H$ of $\Gg$, which contains $\Gamma$, so that $L_{eq} \cong\Gg/H$, and the isomorphism is given by $\pe(f) \mapsto ef$ mod $H$. 
\end{proof}

As $ \Xe$ is an equicontinuous factor of $L$, by universality of the maximal equicontinuous factor, there is a factor map $\eta_0$ making the following diagram commute, where to avoid ambiguity, we add an upper index to $\pe^L:L\rightarrow L_{eq}$.
\begin{equation}\label{eq-commuting2}
\begin{matrix}
L & \stackrel{\widetilde\pe}\to \,\, &\!\! \Xe \\
\!\!\!\pe^L  \downarrow & \:\:\nearrow \eta_0\!\! &  \\
L_{eq}  & & 
\end{matrix}
\end{equation}

\begin{cor}\label{cor:NES-equiv} If $(X,\sigma)$ has no extra spectrum, then $\eta_0$ is bijective, and $L^{fib} = {\pe^L}^{-1}(\ol)$.
\end{cor}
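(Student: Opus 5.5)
The plan is to identify both $L_{eq}$ and $\Xe$ as quotients of $\Gg$, and to show that under NES the two kernels agree. By Theorem~\ref{thm:mef}, $\pe^L(f)=ef \bmod H$, and $L_{eq}\cong \Gg/H$ for a closed normal subgroup $H\supseteq\Gamma$. Since $H$ is closed and normal in $\Gg$ and contains $\Gamma$, it contains $\overline{\text{ncl}_{\Gg}(\Gamma)}$. Under NES this closure equals $\Gf$, so $\Gf\subseteq H$.

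Next I will show the reverse inclusion $H\subseteq \Gf$, which holds without NES. The diagram (\ref{eq-commuting2}) gives $\tpe=\eta_0\circ\pe^L$. If $g\in H$, then $\pe^L(g)=\pe^L(e)=\ol$, and hence $\tpe(g)=\eta_0(\ol)=\tpe(e)$. Now $\tpe(e)=\ev_0({\pe}_*e)$, and ${\pe}_*e$ is an idempotent in the group $E(\Xe)$, so it is the identity. Therefore $\tpe(e)=0$ and $\tpe(g)=0$. By the exact sequence (\ref{eq-SES}), the kernel of $\tpe|_\Gg$ is $\Gf$, so $g\in\Gf$. This gives $H=\Gf$.

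For bijectivity of $\eta_0$, note that the restriction of $\eta_0$ to $L_{eq}\cong\Gg/H$ is the map $\Gg/H\to\Xe$ induced by $\tpe|_\Gg$. Here I need $\tpe$ to be multiplicative on $\Gg$, i.e.\ a group morphism. This holds because ${\pe}_*$ is a semigroup morphism and $\ev_0$ is a group isomorphism $E(\Xe)\to\Xe$. The map $\tpe|_\Gg$ is surjective with kernel $\Gf=H$, so the induced map is a bijection $\Gg/H\to\Xe$, and hence $\eta_0$ is bijective. One point to check is that $\eta_0$ agrees with this induced map. That follows because $\pe^L(f)=\pe^L(ef)$ and $\tpe(f)=\tpe(ef)$, the latter since $\tpe(e)=0$ and $\tpe$ is multiplicative on $L$. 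The main care needed is to confirm the multiplicativity of $\tpe$ on all of $L$, not just on $\Gg$. I expect this to follow directly from ${\pe}_*$ being a semigroup morphism.

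Finally, $L^{fib}={\pe^L}^{-1}(\ol)$. A point $f\in L$ lies in $L^{fib}$ exactly when $\tpe(f)=0$, since preserving fibres means ${\pe}_*f$ is the identity. Because $\eta_0$ is bijective and $\tpe=\eta_0\circ\pe^L$, we have $\tpe(f)=0=\eta_0(\ol)$ if and only if $\pe^L(f)=\ol$.
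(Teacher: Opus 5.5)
Your proof is correct and follows the same route as the paper: NES gives $H\supseteq\overline{\text{ncl}_{\Gg}(\Gamma)}=\Gf$, and $H\subseteq\Gf$ because $\Xe$ is a quotient of $L_{eq}$ (i.e.\ $\tpe=\eta_0\circ\pe^L$), so $\eta_0$ is injective, and the second claim then follows from the diagram (\ref{eq-commuting2}). You spell out details the paper leaves implicit, namely $\tpe(e)=0$, multiplicativity of $\tpe$ on $L$, and surjectivity of $\eta_0$.
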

\begin{proof}
Suppose that $(X,\sigma)$ has no extra spectrum. Theorem~\ref{thm:mef} tells us that the (topologically closed) normal subgroup $H$ contains $\Gamma$.
 The assumption of no extra spectrum tells us  that the topological normal closure of $\Gamma$ is $\Gg^{fib}$, 
 i.e.\ $\overline{\text{ncl}_{\Gg}(\Gamma)} = \Gg^{fib}$.
 So $\Gg^{fib}$ is contained in $H$. But $H \subset \Gg^{fib}$ because $\Xe$ is a quotient of  $L_{eq}$.
  So $\eta_0$ is injective.

The second statement follows by (\ref{eq-commuting2}). 
\end{proof}
The corollary explains the naming ``no extra spectrum" as the injectivity of $\eta_0$ means that $L_{eq}$ and $\Xe$ are conjugate and hence $(X,\sigma)$ and $(L,\sigma_L)$ have the same 
topological dynamical spectrum. 

The usefulness of the property NES is that any Ellis morphism $\Phi:E\to E'$ between systems which have no extra spectrum gives rise to a group epimorphism  $\tPhe : \Xe \rightarrow X'_{eq}$, by
$$\tPhe := \eta_{0'}\circ \Phi_{eq} \circ \eta_0^{-1}$$
where $\Phi_{eq}:L_{eq}\to L'_{eq}$ is the morphism induced on the maximal equicontinuous factor. Furthermore,
\begin{lemma}\label{lem-fibs}
Suppose that each of $(X',\sigma')$, $(X,\sigma)$   have no extra spectrum. If 
 $(X,\sigma)$ and  $(X',\sigma')$, are related by an Ellis morphism $\Phi$,
 then $\Phi (\Gg^{fib})={\Gg'}^{fib}$. 
 \end{lemma}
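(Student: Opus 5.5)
We prove the two inclusions $\Phi(\Gg^{fib})\subset {\Gg'}^{fib}$ and ${\Gg'}^{fib}\subset\Phi(\Gg^{fib})$ directly from the NES property. The key structural facts we use are the earlier lemma that $\Phi$ maps $\Gg$ onto $\Gg'$ and $\Gamma$ onto $\Gamma'$, and the fact that $\Phi$ is continuous.

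\emph{Inclusion $\subset$.} Since $\Phi|_{\Gg}:\Gg\to\Gg'$ is a group morphism, it maps $\text{ncl}_{\Gg}(\Gamma)$ into $\text{ncl}_{\Gg'}(\Phi(\Gamma))=\text{ncl}_{\Gg'}(\Gamma')$. To pass to closures, work inside $L$ and $L'$. The morphism $\Phi$ restricted to $L=Ee$ is continuous into $L'=E'e'$, because $\Phi(fe)=\Phi(f)e'$. Hence $\Phi(\overline{\text{ncl}_{\Gg}(\Gamma)})\subset\overline{\text{ncl}_{\Gg'}(\Gamma')}$, where the closures are taken in $L$ and $L'$.

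We must check that these closures are really the groups $\Gg^{fib}$ and ${\Gg'}^{fib}$. The set $\Gg^{fib}$ is compact by the earlier lemma, and so are $L^{fib}$ and ${L'}^{fib}$. Moreover, any limit in $L^{fib}$ of elements of $\Gg^{fib}=eL^{fib}$ remains in $eL^{fib}$, since $eL$ is closed under the relevant limits: the map $f\mapsto ef$ is not continuous, but $e\Gg^{fib}=\Gg^{fib}$ and we take closures within the compact set $\Gg^{fib}$. Granting this reading of Definition~\ref{def:NES}, NES for both systems gives $\Phi(\Gg^{fib})\subset{\Gg'}^{fib}$.

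An alternative route to the same inclusion avoids closures altogether. By Corollary~\ref{cor:NES-equiv}, $L^{fib}={\pe^L}^{-1}(\ol)$. Since $\Phi$ is an equivariant factor map $L\to L'$, it induces $\Phi_{eq}:L_{eq}\to L'_{eq}$, which is a group morphism by Theorem~\ref{thm:mef} and therefore sends $\ol$ to $e'_{eq}$. Hence $\Phi(L^{fib})\subset{L'}^{fib}$, and intersecting with $\Gg$ gives the inclusion. I will present this route as the main argument, because it is cleaner.

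\emph{Inclusion $\supset$.} Since $\Phi(\Gamma)=\Gamma'$ and $\Phi(\Gg)=\Gg'$, we get $\Phi(\text{ncl}_{\Gg}(\Gamma))=\text{ncl}_{\Gg'}(\Gamma')$: a conjugate $g'\gamma'g'^{-1}$ lifts to $g\gamma g^{-1}$, and products lift to products. It remains to pass to closures. The image $\Phi(\Gg^{fib})$ is compact because $\Gg^{fib}$ is compact and $\Phi$ is continuous. It contains $\text{ncl}_{\Gg'}(\Gamma')$, so it also contains that set's closure, which is ${\Gg'}^{fib}$ by NES for $X'$.

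The main obstacle is the topological bookkeeping: making sure the closure in Definition~\ref{def:NES} is taken in a space where $\Phi$ is continuous and where $\Gg^{fib}$ is compact. I would settle this by interpreting all closures inside $L$ and using compactness of $\Gg^{fib}$ from the earlier lemma. Compactness is what lets the image of a closure equal the closure of the image, which is exactly what the reverse inclusion needs.
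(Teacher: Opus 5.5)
Your proof is correct. For $\Phi(\Gg^{fib})\subset{\Gg'}^{fib}$, the route you chose as main is exactly the paper's. NES for $X$ gives $\Gg^{fib}=e{\pe^L}^{-1}(\ol)$. The induced map $\Phi_{eq}$ sends $\ol=\pe^L(e)$ to $\pe^{L'}(\Phi(e))=\pe^{L'}(e')$. Hence $\Phi$ maps this set into $e'{\pe^{L'}}^{-1}(\pe^{L'}(e'))\subset{\Gg'}^{fib}$.

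The paper's proof stops there and never argues the reverse inclusion. That inclusion does not follow from the same argument. The induced map on maximal equicontinuous factors ($\tPhe$) need not be injective, for instance for the covering examples. So a preimage in $\Gg$ of an element of ${\Gg'}^{fib}$ need not lie in $\Gg^{fib}$.

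Your argument for ${\Gg'}^{fib}\subset\Phi(\Gg^{fib})$ is sound and fills this gap:
\begin{itemize}
\item lifting $\text{ncl}_{\Gg'}(\Gamma')$ into $\text{ncl}_{\Gg}(\Gamma)\subset\Gg^{fib}$, using $\Phi(\Gg)=\Gg'$, $\Phi(\Gamma)=\Gamma'$ and that $\Phi|_{\Gg}$ is a group morphism;
\item then $\Phi(\Gg^{fib})$ is compact, hence closed;
\item then NES for $X'$.
\end{itemize}
State explicitly that closedness comes from $E(X')\subset X'^{X'}$ being Hausdorff. This works whether the closure in Definition~\ref{def:NES} is taken in $\Gg'$, $L'$ or $E(X')$.

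Each inclusion uses NES for only one of the systems: $\subset$ needs it for $X$, and $\supset$ needs it for $X'$ together with compactness of $\Gg^{fib}$.

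You should drop the first, closure-based route for $\subset$. Its justification that limits of elements of $\Gg^{fib}$ stay in $eL$ ``because $eL$ is closed under the relevant limits'' is not valid as stated, since $\Gg=eL$ is not closed in general. What actually does the work there is the compactness of $\Gg^{fib}$ from the paper's earlier lemma, which you also rely on for $\supset$.
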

\begin{proof}
As $X$ has no extra spectrum $\eta_0$ is injective so that $\Gg^{fib} = e\widetilde\pe^{-1}(0)=e{\pe^L}^{-1}(\ol)$. 
Clearly $\Phi(e{\pe^L}^{-1}(\ol)) \subset  e' {\pe^{L'}}^{-1}(\ol')$. 
\end{proof}

\section{When is an Ellis morphism induced by a factor map?}\label{sec:main}

Recall that we  fixed a minimal idempotent $e$ in $E(X)$ to define $L=E(X)e$, and moreover, if $(X',\sigma')$ is related to  $(X,\sigma)$  by an Ellis morphism $\Phi$ we set $e'=\Phi(e)$.
The evaluation map
$\ev_x:L\to X$,  given by $\ev_x(f) = f(x)$, is a factor map between $(L, \sigma_L)$ and $(X, \sigma)$. It defines a closed equivalence relation 
\begin{align}\label{eq:R-relation}R_{\ev_{x}}=\{(f_1,f_2)\in L\times L:f_1(x)=f_2(x)\}.\end{align} There is the analogous relation~\eqref{eq:R-relation} but defined for $L'=E'e'$.

\begin{lemma}\label{lem:morphisms-imply-factors}
 If
$(X',\sigma')$ is  related to $(X,\sigma)$ by an Ellis morphism $\Phi$, then
there is a factor map $\phi:(X,\sigma)\to (X',\sigma')$ such that $\phi_*=\Phi$  if and only if there are $x\in eX$, $x'\in e'X'$  
such that  $\Phi\times\Phi (R_{\ev_{x}})\subset R_{\ev_{x'}}$. 
\end{lemma}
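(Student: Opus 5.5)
Both directions reduce to the evaluation factor maps $\ev_x:L\to X$ and $\ev_{x'}:L'\to X'$. We use that $L=Ee$ with $e$ a minimal idempotent, that $\Phi(L)\subset L'$ since $\Phi(fe)=\Phi(f)e'$, and that for $x\in eX$ we have $ex=x$. Minimality of $X$ then gives $\ev_x(L)=Ex=X$, because $Lx\supset \{\sigma^t e x\}=\{\sigma^t x\}$ and $Lx$ is closed. Similarly $\ev_{x'}(L')=X'$.

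\emph{($\Rightarrow$)} Suppose $\phi$ is a factor map with $\phi_*=\Phi$. Take any $x\in eX$ and set $x'=\phi(x)$. By definition of $\phi_*$,
\[
\Phi(f)(\phi(y))=\phi(f(y)) \quad \text{for all } y\in X.
\]
In particular $e'x'=\Phi(e)(\phi(x))=\phi(ex)=\phi(x)=x'$, so $x'\in e'X'$. If $f_1(x)=f_2(x)$, then
\[
\Phi(f_1)(x')=\phi(f_1(x))=\phi(f_2(x))=\Phi(f_2)(x'),
\]
so $(\Phi\times\Phi)(R_{\ev_x})\subset R_{\ev_{x'}}$.

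\emph{($\Leftarrow$)} Given such $x$ and $x'$, define $\phi:X\to X'$ by $\phi(f(x)):=\Phi(f)(x')$ for $f\in L$. This is well defined precisely by the inclusion of relations, and $\phi$ is defined on all of $X$ because $\ev_x(L)=X$.

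Continuity of $\phi$ comes from compactness. $\ev_x$ is a continuous surjection from the compact space $L$ onto the Hausdorff space $X$, hence a quotient map. The map $\ev_{x'}\circ\Phi$ is continuous and constant on the fibres of $\ev_x$, so it descends to the continuous map $\phi$.

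Equivariance follows from
\[
\phi(\sigma^t f(x))=\phi\bigl((\sigma^t_E f)(x)\bigr)=\Phi(\sigma_E^t f)(x')={\sigma'}^t\Phi(f)(x')={\sigma'}^t\phi(f(x)).
\]
Surjectivity follows because $\phi(X)$ is a nonempty closed invariant subset of the minimal system $X'$.

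It remains to prove $\phi_*=\Phi$, and this is the step I expect to need the most care. Fix $g\in E(X)$ and a point $y=f(x)$ with $f\in L$. Then $gf\in L$, and multiplicativity of $\Phi$ gives
\[
\phi_*(g)(\phi(y))=\phi(g f(x))=\Phi(gf)(x')=\Phi(g)\Phi(f)(x')=\Phi(g)(\phi(y)).
\]
Since $\phi$ is surjective, $\phi_*(g)=\Phi(g)$ on all of $X'$. The subtlety is that this uses $gf\in L$, i.e. that $L$ is a left ideal, together with the description of every point of $X$ as $f(x)$ with $f\in L$. Both facts were established at the start.
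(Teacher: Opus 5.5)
Your proposal is correct and follows the paper's proof essentially step for step: you define $\phi$ through the commuting square $\phi\circ\ev_x=\ev_{x'}\circ\Phi$ on $L$, you check $\Phi=\phi_*$ by writing $y=f(x)$ with $f\in L$ and using $\Phi(g)\Phi(f)=\Phi(gf)$, and the converse is the same direct computation with $x'=\phi(x)$. Your quotient-map argument for continuity, and your explicit checks that $\ev_x(L)=X$ and $\Phi(L)\subset L'$, make precise what the paper leaves implicit.
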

\begin{proof}
Assume that there are $x\in eX$, $x'\in e'X'$  
such that $\Phi\times\Phi (R_{\ev_{x}})\subset R_{\ev_{x'}}$, i.e., if $f(x)=f'(x)$, then $\Phi (f(x)) = \Phi (f'(x))$. Given $y\in X$, all elements of  the preimage $\ev_x^{-1}(y)$ belong to the same $R_{\ev_x}$-class. Therefore 
$\phi(y):= \ev_{x'}(\Phi(\ev_x^{-1}(y))$ is well defined, and  
 the following diagram commutes:
$$
\begin{matrix}
L & \stackrel{\Phi}\to &L' \\
\ev_x \downarrow &  & \downarrow \ev_{x'} \\
X & \stackrel{\phi}\to & X'
\end{matrix}
$$
Applied to $e\in L$  and $x\in eX$, the diagram tells us that $\phi(x)=x'$. Since $\ev_x$, $\Phi$ and $\ev_{x'}$ are factor maps, $\phi$ is also a factor map, where its continuity follows from the continuity of $\Phi$. By minimality of the actions $\phi$ is the unique factor map satisfying the above.

We now show that $\Phi=\phi_*$, that is, $\Phi(f)(y') = \phi(f(y))$ for all $f\in E$, $y'\in X'$ and $y\in \phi^{-1}(y')$. The commuting diagram tells us that this equation is satisfied for all $f\in L$ if $y'=x'$ and $y=x$. Let $f\in E$, $y'\in X'$ and $y\in \phi^{-1}(y')$. By minimality of $\sigma$ there is $f_2\in L$ such that $f_2(x)=y$. Then
$\Phi(f)(y') = \Phi(f)(\phi(y)) = \Phi(f)(\phi(f_2(x))) = \Phi(f)\Phi(f_2)(x') = \Phi(f f_2)(x') = \phi(f f_2(x)) = \phi(f(y))$.

As for the converse, suppose that $\phi:X\to X'$ is a factor map and choose a minimal idempotent $e\in E$. Let $e'=\phi_*(e)$. Let $x\in eX$ and set  $x'=\phi(x)$. Then $e'(x') = \phi_*(e)(x') = \phi(ex) = x'$ so that $x'\in e'X'$. 
Let $(f_1,f_2)\in R_{\ev_x}$, i.e.\
$f_1(x)=f_2(x)$. As $\phi_*(f_1)(x')= \phi(f_1(x))$ we find 
$\phi_*(f_1)(x')=\phi_*(f_2)(x')$.
\end{proof}

\subsection{Conspiracy among the idempotents of $L$}

\begin{definition}
Let $J_L$ denote the idempotents of $L$.  We say that a point $x\in X$ {\em separates the idempotents} of $L$ if $\ev_x:J_L\to X$ is injective.
\end{definition}
Note that if the system is not equicontinuous, then a 
point in a regular fibre cannot separate the idempotents of $L$, as  any idempotent acts as the identity on that fibre. Therefore, 
the only other option would be that $L$ has a single idempotent. However in this case all fibres are regular and therefore the system is equicontinuous. 
 
 Recall that a semigroup is  completely simple  if it has no nontrivial bilateral ideal and contains an idempotent. A minimal left ideal of the Ellis semigroup is completely simple. In completely simple semigroups, every element $f$ has a unique normal inverse, i.e., an element $f^{-1}$ such that $ff^{-1}f = f$, $f^{-1}ff^{-1} = f^{-1}$ and $ff^{-1}=f^{-1}f$. 
Let $\Rr$ denote Green's right ideal relation, which for a completely simple semigroup means $(f_1,f_2)\in \Rr$ if $f_1^0 = f_2^0$, where 
$f^0 = f^{-1} f$. Note that the idempotent  $f^0$   satisfies $f^0f=f$. We can refine the relation $R_{\ev_x}$ defined in~\eqref{eq:R-relation}: for $\xi\in\Xe$, set
$$R^\xi_{\ev_x}(L)=\{(f_1,f_2)\in L\times L: (f_1,f_2)\in R_{\ev_x} \mbox{ and } {\pe}_* (f_i)= \xi, i=1,2\}.$$ 
Keep in mind that $\tpe (f_i)= \pe(f_i(x)) -\pe(x)$  for any $x$. 
When no confusion  arises we simply write $R^\xi_{\ev_x}$ for $R^\xi_{\ev_x}(L)$.
\begin{lemma}\label{lem-Rr} For $\xi\in\Xe$, the following are equivalent:
\begin{enumerate}
\item Each point in $\fb{\xi}$ separates the idempotents of $L$.
\item For each $x\in X$, $R^{\xi-\pe(x)}_{\ev_x}\subset \Rr$. 
\item For each $x\in \fb{\xi}$, $R^0_{\ev_x}\subset \Rr$. 
\end{enumerate}
\end{lemma}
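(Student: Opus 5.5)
The plan is to prove the cycle $(1)\Rightarrow(2)\Rightarrow(3)\Rightarrow(1)$. The only structural facts needed are those recalled just before the statement. First, $L$ is completely simple, so each $f\in L$ has an idempotent $f^0\in J_L$ with $f^0 f=f$. Second, $(f_1,f_2)\in\Rr$ means $f_1^0=f_2^0$. I also use that every idempotent $p\in J_L$ satisfies $\tpe(p)=0$: indeed $\tpe$ is a semigroup morphism into the group $\Xe$, and the only idempotent of a group is its neutral element. Throughout I read the condition ${\pe}_*(f_i)=\xi$ in the definition of $R^\xi_{\ev_x}$ as $\tpe(f_i)=\xi$, as the remark after that definition indicates.

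\emph{$(1)\Rightarrow(2)$.} Fix $x\in X$ and $(f_1,f_2)\in R^{\xi-\pe(x)}_{\ev_x}$, and set $y:=f_1(x)=f_2(x)$. Since $\tpe(f_i)=\pe(f_i(x))-\pe(x)=\xi-\pe(x)$, we get $\pe(y)=\xi$, so $y\in\fb{\xi}$. Put $p_i:=f_i^0$. Then $p_i(y)=p_if_i(x)=f_i(x)=y$ for $i=1,2$, hence $p_1(y)=p_2(y)$. Because $y$ separates the idempotents of $L$ by (1), it follows that $p_1=p_2$, that is $(f_1,f_2)\in\Rr$.

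\emph{$(2)\Rightarrow(3)$.} This is the specialisation of (2) to $x\in\fb{\xi}$, where $\xi-\pe(x)=0$.

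\emph{$(3)\Rightarrow(1)$.} Let $y\in\fb{\xi}$ and let $p,q\in J_L$ with $p(y)=q(y)$. Then $\tpe(p)=\tpe(q)=0$, so $(p,q)\in R^0_{\ev_y}$. By (3) this pair lies in $\Rr$, so $p^0=q^0$. For an idempotent we have $p^0=p$: the element $p$ is its own normal inverse, since $ppp=p$ and $pp=pp$. Therefore $p=q$, and $y$ separates the idempotents.

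Main obstacle: there is no real difficulty here. The only points requiring care are the bookkeeping that $\tpe(f)=\pe(f(x))-\pe(x)$ places $f(x)$ in the fibre over $\xi$, and the facts that $f^0$ fixes $f(x)$ and that $p^0=p$ for idempotents.
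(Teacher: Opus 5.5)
Your proof is correct and follows the paper's argument: you prove the same cycle $(1)\Rightarrow(2)\Rightarrow(3)\Rightarrow(1)$. The first implication uses that $f_i^0$ fixes $f_i(x)$, which lies in the fibre over $\xi$, and the last uses $p=p^0=q^0=q$. You also make explicit two points the paper leaves implicit: that $\tpe$ vanishes on idempotents, so that $(p,q)\in R^0_{\ev_y}$, and that $p^0=p$ for an idempotent $p$.
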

\begin{proof} To see that (1) implies (2),  
let $x\in X$ and $(f_1,f_2)\in R^{\xi-\pe(x)}_{\ev_x}$. This means that \[f_2^0 f_2(x)= f_2(x)=f_1(x)=f_1^0 f_1(x) = f_1^0 f_2(x) , \] and $\pe(f_i(x))=\xi$. Assume that all points of $\fb{\xi}$ separate the idempotents of $L$. Then, as $f_2^0 f_2(x)=  f_1^0 f_2(x)$, we have $f_1^0 = f_2^0$. Hence $(f_1,f_2)\in\Rr$.

That (2) implies (3) is  obvious. 
Finally we show that (3) implies (1). Let  $x\in\fb{\xi}$ and 
let $p,q\in J_L$ with $p(x)=q(x)$. Then $(p,q)\in R^0_{\ev_x}$. If $R^0_{\ev_x}\subset \Rr$, then $p=p^0=q^0=q$. Hence $x$ separates $J_L$.
\end{proof}
\begin{lemma} \label{lem-sing}
Let $x\in eX$, $\xi\in \Xe$, and $L=Ee$.
\begin{enumerate}
\item If  $\xi$ is regular then
$(f_1,f_2)\in R^{\xi-\pe(x)}_{\ev_{x}}$ if and only if $(ef_1)^{-1}f_2 \in \stab_{\Gg}(x)$ and $\tpe(f_i) = \xi-\pe(x)$     . 
\item  
If each point of $\fb{\xi}$ separates the idempotents of $L$ then
$(f_1,f_2)\in R^{\xi-\pe(x)}_{\ev_{x}}$ if and only if $(ef_1)^{-1}f_2 \in \stab_{\Gg}(x)$, $f_1^0=f_2^0$ and $\tpe(f_i) = \xi-\pe(x)$. 
\end{enumerate}
\end{lemma}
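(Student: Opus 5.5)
The plan is to reduce both statements to an algebraic decomposition of elements of $L$ and the fact that idempotents act injectively on $eX$. First I would record some elementary facts about the completely simple semigroup $L=Ee$.
\begin{enumerate}
\item Since $L=Ee$, every $l\in L$ satisfies $le=l$. More generally every idempotent $q\in L$ is a right identity on $L$, because $L=Eq$. In particular $pe=p$ and $ep=e$ for all $p\in J_L$.
\item For $f\in L$ we have $ef\in eL=\Gg$, a group with neutral element $e$.
\item The key identity is the factorisation
$$f = f^0(ef),$$
which holds because $f^0e=f^0$ and $f^0f=f$.
\item The points $y_i:=ef_i(x)$ lie in $eX$ (as $x\in eX$ and $\Gg$ maps $eX$ to itself). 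They also lie in the same $\pe$-fibre as $f_i(x)$, since $f_i(x)=f_i^0(y_i)$ is proximal to $y_i$. So the condition $\tpe(f_i)=\xi-\pe(x)$ says exactly that $y_i\in e\fb{\xi}$.
\item For $g\in\Gg$ with inverse $g^{-1}\in\Gg$ we have $g^{-1}g=e$, which acts as the identity on $eX$. Also $g^{-1}f_2=g^{-1}ef_2$, because $g^{-1}\in L$ gives $g^{-1}e=g^{-1}$.
\end{enumerate}
Together these give, for $g=ef_1$,
$$ef_1(x)=ef_2(x)\iff (ef_1)^{-1}f_2\in\stab_{\Gg}(x).$$

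For (1), suppose $\xi$ is regular. Then the points of $\fb{\xi}$ are pairwise non-proximal. Since $p(y)$ and $y$ are proximal for every idempotent $p$, each idempotent acts as the identity on $\fb{\xi}$. Hence $f_i(x)=f_i^0(y_i)=y_i=ef_i(x)$. The condition $f_1(x)=f_2(x)$ is therefore equivalent to $ef_1(x)=ef_2(x)$, and hence to $(ef_1)^{-1}f_2\in\stab_{\Gg}(x)$, with the fibre condition carried along unchanged.

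For (2), suppose every point of $\fb{\xi}$ separates the idempotents of $L$.
\begin{itemize}
\item Forward direction: if $(f_1,f_2)\in R^{\xi-\pe(x)}_{\ev_x}$, then Lemma~\ref{lem-Rr} ((1)$\Rightarrow$(2)) gives $f_1^0=f_2^0=:p$. It then remains to deduce $y_1=y_2$ from $p(y_1)=p(y_2)$.
\item The main point is that each $p\in J_L$ is injective on $eX$. Since $ep=e$, for $y\in eX$ we get $e(p(y))=e(y)=y$. So $e$ is a left inverse of $p$ on $eX$, and $y_1=y_2$ follows. By the identity in item 5 of the list, this gives $(ef_1)^{-1}f_2\in\stab_{\Gg}(x)$.
\item Converse: if $f_1^0=f_2^0=p$ and $(ef_1)^{-1}f_2$ stabilises $x$, then $y_1=y_2$. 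Hence $f_1(x)=p(y_1)=p(y_2)=f_2(x)$.
\end{itemize}

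I expect the main obstacle to be the bookkeeping behind the factorisation $f=f^0(ef)$ and the manipulation of inverses in $\Gg$ versus the normal inverse in $L$. Once the right/left identity relations among the idempotents of $L$ are in place, the rest is direct.
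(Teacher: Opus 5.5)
Your proof is correct and follows essentially the same route as the paper: the preliminary equivalence $ef_1(x)=ef_2(x)\iff (ef_1)^{-1}f_2\in\stab_{\Gg}(x)$, idempotents acting trivially on regular fibres for (1), and Lemma~\ref{lem-Rr} together with the factorisation $f_i=f_i^0ef_i$ for (2). The only difference is cosmetic: in the forward direction of (2), your argument that $p$ is injective on $eX$ can be replaced by simply applying $e$ to $f_1(x)=f_2(x)$, which is what the paper does.
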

\begin{proof} 
Note that $(f_1,f_2)\in R_{\ev_{x}}$ implies $ef_1(x) = e f_2(x)$, so
\[x=ex  =(ef_1)^{-1}ef_1(x) = (ef_1)^{-1}e f_2(x) = (ef_1)^{-1} f_2(x), \]
 with the last equality because $(ef_1)^{-1}\in Ee$. Conversely,
 if $x = (ef_1)^{-1} f_2(x)$ then $ef_1(x) = (ef_1)^0 f_2(x) = ef_2(x)$.
 
(1) The above shows the direction ``$\Rightarrow$". For the converse, assume that $\xi$ is regular, that  $ef_1(x)  = ef_2(x)$ and  that $
   \tpe(f_1) = \tpe(f_2) = \xi-\pe(x)$. Then $\pe(f_i(x)) =  \tpe(f_i)      +\pe(x) = \xi$ and, since $e$ acts as the identity on the fibre of $\xi$, we find  $f_1(x) =ef_1(x) = e f_2(x)=f_2(x)$.

(2) Assume that each point in  $\fb{\xi}$ separates the idempotents of $L$. By Lemma~\ref{lem-Rr} 
$R^{\xi-\pe(x)}_{\ev_{x}}\subset \Rr$ and so $(f_1,f_2)\in R^{\xi-\pe(x)}_{\ev_{x}}$ implies that   $f_1^0=f_2^0$. By the preliminary comment, $(ef_1)^{-1}f_2(x) = x$.

Conversely, if $f_1^0=f_2^0$ then 
$ef_1(x) = e f_2(x)$ implies \[f_1(x) = f_1^0  e f_1(x) = f_1^0 e f_2(x)= f_2^0 e f_2(x)=f_2(x),\]
so that $(f_1,f_2)\in R_{\ev_{x}}$.
\end{proof}
The assumption that all points in  $\fb{\xi}$ separate the idempotents of $L$, which can only be satisfied for singular $\xi$, is a pretty strong form of conspiracy. It implies that 
$\im p \cap \im q \cap \fb{\xi} =\emptyset$, i.e., $p \fb{\xi}\cap q \fb{\xi}=\emptyset$
whenever $p\neq q$ and $\xi$ is singular. If the system has finite coincidence rank, then 
$ p \fb{\xi}$ has $cr$ points; this means that the size of a singular fibre must be divisible by $cr$. This is not always the case: see for example the  bijective substitution\footnote{see Section~\ref{bijective} for the definition of a substitution shift.}
\[ a \mapsto abcca,\quad b\mapsto babab,\quad c\mapsto ccabc,\]
 for which $\pe^{-1}(0)=\{a.b,b.c,c.c,c.a,b.a\}$ has 5 elements, the 5 bi-infinite fixed points. The techniques in~\cite{ESBS} can be used to show that it has 2 minimal left ideals, each of which has 2 idempotents. The two idempotents both contain the fixed point $a.b$ in their image. 

The following theorem singles out three properties which guarantee that $\Phi$ comes from a factor map.  
We denote by $\Xe^{sing}$ to be the set of singular fibres and by $\Xe^{reg}$ the set of regular fibres in $\Xe$.
\begin{theorem}\label{thm-main} Consider two systems
$(X,\sigma)$ and $(X',\sigma')$ which have no extra spectrum and which are related by an 
Ellis morphism $\Phi:E(X)\rightarrow E(X')$. Let $L=E(X)e$ be a minimal left ideal and let $e'=\Phi(e)$. Let  $x\in e\pe^{-1}(0)$ and $x'\in e'\pe^{-1}(0)$. If
\begin{enumerate}
\item\label{as-1} all points in singular fibres of $\Xe$ separate elements of  $J_L$,
\item\label{as-3} $\tPhe ( \Xe^{\,reg}  ) \subset \Xe'^{\, reg}$, and
\item\label{as-2} $\Phi(\stab_{\Gg}(x))\subset \stab_{\Gg'}(x')$.
\end{enumerate}
then $x \mapsto x'$ extends to a factor map $\phi:X\to X'$ such that \ $\Phi=\phi_*$.  
\end{theorem}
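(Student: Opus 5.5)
By Lemma~\ref{lem:morphisms-imply-factors}, it suffices to show $\Phi\times\Phi(R_{\ev_x})\subset R_{\ev_{x'}}$. I would split $R_{\ev_x}$ according to the value of $\tpe$. If $(f_1,f_2)\in R_{\ev_x}$, then $f_1(x)=f_2(x)$ lies in a single fibre $\fb{\xi}$, and since $\pe(x)=0$ we get $\tpe(f_1)=\tpe(f_2)=\xi$. Hence $R_{\ev_x}=\bigcup_\xi R^{\xi}_{\ev_x}$, and the task reduces to showing $\Phi\times\Phi(R^\xi_{\ev_x})\subset R^{\xi'}_{\ev_{x'}}$, where $\xi'=\tPhe(\xi)$.

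The first step is to check that $\tPhe$ is compatible with $\tpe$ on $L$, namely $\widetilde{\pe'}(\Phi(f))=\tPhe(\tpe(f))$. This follows from the definition $\tPhe=\eta_{0'}\circ\Phi_{eq}\circ\eta_0^{-1}$ together with diagram~\eqref{eq-commuting2}, where NES makes $\eta_0$ invertible. So the second coordinate condition is automatically preserved, and the remaining point is preservation of $f_1(x)=f_2(x)$. For this I would use the characterisations of Lemma~\ref{lem-sing} on both sides, which requires knowing whether $\xi$ and $\xi'$ are regular or singular.

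\emph{Case $\xi$ regular.} Lemma~\ref{lem-sing}(1) gives $(ef_1)^{-1}f_2\in\stab_\Gg(x)$. Since $\Phi$ is a monoid morphism mapping $\Gg$ onto $\Gg'$, it preserves normal inverses within groups, so $\Phi((ef_1)^{-1}f_2)=(e'\Phi(f_1))^{-1}\Phi(f_2)$. Assumption (3) then places this element in $\stab_{\Gg'}(x')$. By assumption (2), $\xi'$ is regular, and Lemma~\ref{lem-sing}(1) applied in $X'$ yields $(\Phi f_1,\Phi f_2)\in R^{\xi'}_{\ev_{x'}}$. One detail to verify: $(ef_1)^{-1}f_2$ is only in $\Gg$ once multiplied by $e$ on the left, so I would use the version $(ef_1)^{-1}ef_2$, which agrees on $x$.

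\emph{Case $\xi$ singular.} Assumption (1) and Lemma~\ref{lem-sing}(2) give $(ef_1)^{-1}f_2\in\stab_\Gg(x)$ and $f_1^0=f_2^0$. The morphism $\Phi$ preserves $f^0$, because it maps normal inverses to normal inverses, so $\Phi(f_1)^0=\Phi(f_2)^0$. The stabiliser condition transfers by (3). The converse direction of the proof of Lemma~\ref{lem-sing}(2), namely ``$ef_1(x)=ef_2(x)$ and $f_1^0=f_2^0$ imply $f_1(x)=f_2(x)$,'' uses no hypothesis on the fibre. So it applies in $X'$ whether $\xi'$ is regular or singular, and gives $\Phi(f_1)(x')=\Phi(f_2)(x')$.

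I expect the main obstacle to be the bookkeeping in the regular case. There the equality $e'g'(x')=e'h'(x')$ must be upgraded to $g'(x')=h'(x')$, and this needs $e'$ to act as the identity on the fibre containing $\Phi(f_i)(x')$. That is exactly where assumption (2) enters, in combination with the compatibility $\widetilde{\pe'}\circ\Phi=\tPhe\circ\tpe$. I would also make sure that $x'\in e'X'$ is used to get $x'=e'x'$ throughout.
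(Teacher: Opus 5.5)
Your proposal is correct and follows the paper's proof essentially step for step: the reduction to Lemma~\ref{lem:morphisms-imply-factors}, the transfer of $ef_1(x)=ef_2(x)$ via assumption (3), assumption (2) plus Lemma~\ref{lem-sing}(1) in the regular case, and in the singular case preservation of $f\mapsto f^0$ (the paper phrases this as preservation of $\Rr$) together with the hypothesis-free converse half of Lemma~\ref{lem-sing}(2). Your observation that this half needs no separation property in $X'$ is exactly what justifies the paper's final appeal to that lemma, and your one worry is unfounded but harmless: since $(ef_1)^{-1}=(ef_1)^{-1}e$, the element $(ef_1)^{-1}f_2$ already lies in $eEe=\Gg$.
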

\begin{proof}
We aim to show that  
$$\Phi\times\Phi (R_{\ev_{x}})\subset R_{\ev_{x'}}$$
for then by Lemma~\ref{lem:morphisms-imply-factors} we are done.
Let $(f_1,f_2)\in R_{\ev_{x}}$. 
As in the proof of  Lemma~\ref{lem-sing} we see that this implies $(ef_1)^{-1}f_2 \in \stab_{\Gg}(x)$. 
Under assumption (\ref{as-2}) 
we get $e'\Phi(f_1)^{-1}\Phi(f_2)\in {\stab_{\Gg'}(x')}$ 
hence
\begin{equation}\label{eq-1}
e'\Phi(f_1)(x')=e'\Phi(f_2)(x').
\end{equation}
It remains to show that this equation implies 
$\Phi(f_1)(x')=\Phi(f_2)(x')$.

We consider first the case that $ \tpe(f_i)$ is regular which is the same as saying that  $\pe(f_i(x))$ is regular, as $ \tpe(f_i)= \pe(f_i(x))-\pe(x)$ and $\pe(x)=0$.
Note that $ \tpe(f_1)= \tpe(f_2)$. As 
$\tpe(\Phi(f_1))=\tPhe(\tpe(f_1))$, $\tpe(\Phi(f_1))$ is regular by assumption (\ref{as-3})
and the first part of Lemma~\ref{lem-sing} allows to conclude  
that $\Phi(f_1)(x')=\Phi(f_2)(x')$.

Now suppose that $\xi:= \pe(f_i(x))$ is singular. Then, 
by Lemma~\ref{lem-Rr}, $(f_1,f_2)\in \Rr$. Since the $\Rr$-relation is preserved under morphisms we must have $(\Phi(f_1),\Phi(f_2))\in\Rr$.
Hence, by the second part of Lemma~\ref{lem-sing}, we get $\Phi(f_1)(x')=\Phi(f_2)(x')$. 
\end{proof}


\section{USF systems}\label{sec:USF-systems} 
The assumptions of Theorem~\ref{thm-main} are a bit elusive.    
The purpose of this section is to present a class of systems which satisfies them. The key properties of these systems which make this possible are, that the singular points of the system can be controlled by the automorphism group and that the automorphism group coincides with the center of the structure group of the Ellis semigroup. The latter property is a stronger form of {\em semi-regularity}, a notion that was recently proposed by Auslander and Glasner \cite{AG}.

As above, we chose a minimal idempotent $e$ of $E$ and identify the structure group with $\Gg=eEe$.
 The following does either not depend at all on the choice of $e$, or only up to isomorphism. 
\begin{definition} The system
 $(X,T)$ is called a \emph{unique singular fibre} (USF) system if  $\Xe^{sing}=\aute(0)$, i.e., $\Xe^{sing}$ is a single orbit under $\aute$. We call it \emph{strongly} USF if 
$X_0:=e\pe^{-1}(\Xe^{sing})$ is a single orbit under $\aut$. 
\end{definition}
If $cr=1$ then any USF system is strongly USF. 
We denote by $\aut_0$ the restriction of $\aut$ to $X_0$. Note that the restriction map is an isomorphism as by minimality an automorphism acts fixed point freely on its orbits. We also mention that $\Gg^{fib}$ is trivial if $cr=1$. So the following discussion is interesting only for systems with $cr>1$.

\subsection{The structure of $\Gg^{fib}$ for USF systems}
Recall that $\Ef$ are the elements of the Ellis semigroup which preserve the MEF fibres and that $\Gf = \Ef\cap \Gg $. We denote by 
 $\res_{e\pe^{-1}(\xi) }(f)$ the restriction of $f$ to the set $e\pe^{-1}(\xi)$,  for $f\in \Ef$, and we note that it is continuous.
We denote by  $\Gf_\xi$ the restriction of $\Gf$ to $e\pe^{-1}(\xi)$.
Likewise, we denote by $\Gamma_\xi$ the restriction of the little structure group $\Gamma$ to $e\pe^{-1}(\xi)$. The next lemma is \cite[Lemma 3.3]{ESBS}, using our notation here.
 \begin{lemma}
Each $\Gg^{fib}_\xi$ is isomorphic to $\Gf_0$.
\end{lemma}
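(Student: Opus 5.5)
We prove that each $\Gf_\xi$ is isomorphic to $\Gf_0$ by transporting the fibre over $0$ to the fibre over $\xi$ with an element of the structure group $\Gg$. The transport goes through conjugation in $\Gg$, using that $\Gf$ is normal in $\Gg$ by the exact sequence \eqref{eq-SES}.

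Fix $\xi\in\Xe$. Since $\tpe:\Gg\to\Xe$ is surjective, we choose $g\in\Gg$ with $\tpe(g)=\xi$. We can then say three things about $g$.

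First, $g$ maps $e\fb{0}$ into $e\fb{\xi}$. This holds because $g=eg$ maps the fibre of $\eta$ into the fibre of $\eta+\xi$, and its image lies in $eX$.

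Second, $g$ is injective on $e\fb{0}$, with inverse given by $g^{-1}\in\Gg$. For $y\in eX$ we have $g^{-1}g(y)=e(y)=y$, and likewise $gg^{-1}=e$ acts as the identity on $eX$.

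Third, $g$ is a bijection $b_g:e\fb{0}\to e\fb{\xi}$. Surjectivity follows from the same computation applied to $g^{-1}$, which maps $e\fb{\xi}$ back into $e\fb{0}$.

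Now define $\Psi:\Gf_0\to\Gf_\xi$ by
$$\res_{e\fb{0}}(f)\mapsto b_g\circ \res_{e\fb{0}}(f)\circ b_g^{-1}=\res_{e\fb{\xi}}(gfg^{-1}).$$
The equality holds because $gfg^{-1}\in\Gf$ and all three maps preserve the relevant fibres of $eX$. The main obstacle is showing that $\Psi$ is well defined. Concretely, if $f_1,f_2\in\Gf$ agree on $e\fb{0}$, we need $gf_1g^{-1}$ and $gf_2g^{-1}$ to agree on $e\fb{\xi}$. This does hold: for $z\in e\fb{\xi}$ the point $g^{-1}z$ lies in $e\fb{0}$, where $f_1$ and $f_2$ agree.

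With well-definedness in hand, the remaining properties are direct.

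Multiplicativity is immediate from the formula $\Psi(\res f)=\res(gfg^{-1})$.

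Injectivity follows by conjugating back with $g^{-1}$.

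Surjectivity holds because every $h\in\Gf$ can be written as $h=g(g^{-1}hg)g^{-1}$ with $g^{-1}hg\in\Gf$, by normality of $\Gf$ in $\Gg$.

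Hence $\Psi$ is an isomorphism of groups, given explicitly as conjugation by the bijection $b_g$.

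For a topological isomorphism we use that conjugation by the fixed bijection $b_g$ is a homeomorphism for the topology of pointwise convergence. This suffices because the topologies on $\Gf_0$ and $\Gf_\xi$ are the pointwise topologies on $\Ff(e\fb{0})$ and $\Ff(e\fb{\xi})$ respectively.
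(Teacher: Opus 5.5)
Your proof is correct and follows the paper's own argument: pick $g\in\Gg$ with $\tpe(g)=\xi$, note that it restricts to a bijection $e\fb{0}\to e\fb{\xi}$, and conjugate $\Gf_0$ by this bijection. You additionally supply details the paper leaves implicit, namely bijectivity via $g^{-1}$ and the fact that the conjugate is all of $\Gf_\xi$ by normality of $\Gf$ in $\Gg$.
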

\begin{proof}
Let $g\in \Gg$ such that $ \tpe(g) =\xi$. Then $g$ maps $e\pe^{-1}(0)$ bijectively onto $e\pe^{-1}(\xi)$. It follows that conjugation of $\Gg^{fib}_0$ by this bijection is $\Gg^{fib}_\xi$.
\end{proof}

Recall the definition of $Cov(\prod_{\xi\in \Xe} \Ff(e\pe^{-1}(\xi)))$ from Section~\ref{sec-2.4}. 
It has the following subgroup
$$Cov(\prod_{\xi\in \Xe} \Gg^{fib}_\xi):=\{(\tilde g(\xi))_\xi:\tilde g(\xi)\in \Gg^{fib}_\xi, g\:\:\mbox{is covariant}\}.$$
Restricting $\mu$ as defined in \eqref{def:mu}, which we denote by the same letter, we obtain a continuous injective group morphism
$\mu:\Gg^{fib} \to Cov(\prod_{\xi\in \Xe} \Gg^{fib}_\xi)$, 
$\mu(g) = \tilde g$ with $\tilde g(\xi)(x) = \tilde g(x)$ for all $x\in \pe^{-1}(\xi)$. 
\begin{lemma}\label{lem-little}
If $(X,\sigma,T)$ is a USF system, then the  little structure group $\Gamma$ acts trivially on $X\backslash X_0$. Moreover, the restriction $\Gamma_0$ of $\Gamma$ to $e\pe^{-1}(0)$ 
is isomorphic to $\Gamma$.
\end{lemma}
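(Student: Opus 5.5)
The plan is to go through the generators of $\Gamma$. By the remark from \cite{ESBS} recalled earlier, $\Gamma$ is the intersection of the closure of $\langle J_{min}(E)\rangle$ with $\Gg$. Hence every $g\in\Gamma$ is a limit of products $p_1p_2\cdots p_n$ of minimal idempotents, and we may also compose with $e$ on either side.

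\emph{Step 1: idempotents act trivially on regular fibres.} Let $p$ be a minimal idempotent. For any $y$, the points $p(y)$ and $y$ are proximal. So they lie in the same $\pe$-fibre, and if that fibre is regular, then $p(y)=y$. Every finite product of idempotents therefore fixes every point of $\pe^{-1}(\Xe^{reg})$. Pointwise convergence preserves this, so every element of $\overline{\langle J_{min}\rangle}$, and in particular every $g\in\Gamma$, is the identity on regular fibres.

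\emph{Step 2: the USF hypothesis puts all singular fibres into $X_0$.} The USF assumption says $\Xe^{sing}=\aute(0)$. The statement should be read on $eX$, since $\Gamma\subset\Gg=eEe$ acts on $eX$; I take "$X\backslash X_0$" to mean points of $eX$ outside $X_0$, equivalently the points of $e\pe^{-1}(\Xe^{reg})$. A point of $eX$ not in $X_0=e\pe^{-1}(\Xe^{sing})$ lies in a regular fibre, so Step 1 gives $g(y)=y$. This proves the first claim.

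\emph{Step 3: $\Gamma\to\Gamma_0$ is injective.} Suppose $g\in\Gamma$ restricts to the identity on $e\pe^{-1}(0)$. Let $\xi\in\Xe^{sing}$. By USF, $\xi=\alpha_{eq}(0)$ for some $\alpha\in\aut$, and $\alpha$ maps $e\pe^{-1}(0)$ bijectively onto $e\pe^{-1}(\xi)$; this uses that automorphisms commute with $e$ and map fibres to fibres. Since $g\in E$ commutes with $\alpha$, we get $g\alpha x=\alpha g x=\alpha x$ for $x\in e\pe^{-1}(0)$. So $g$ is the identity on all of $X_0$, and by Step 2 on all of $eX$. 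As $g\in\Gg$ and $e$ is the identity element of $\Gg$, we have $g=ge$, and $g$ agrees with $e$ on $eX$, hence $g(x)=g(e x)=e x$ for all $x\in X$. Thus $g=e$. Restriction is a group morphism, because $\Gamma$ preserves each fibre; it is onto by the definition of $\Gamma_0$, and we have just shown it is injective, so $\Gamma\cong\Gamma_0$.

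The main obstacle is the set-level bookkeeping rather than any hard estimate. One must confirm that the restriction map really lands in $\Ff(e\pe^{-1}(0))$, that $\Gamma\subset\Gg^{fib}$, and that Step 1 applies to all of $\Gamma$. For the last point, I would first use the limit description of $\Gamma$ directly; if the closure causes trouble, I would instead argue via $\pe(p f)=\pe(f)$ as in the proof of Theorem~\ref{thm:mef}.
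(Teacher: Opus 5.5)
Your proof is correct and follows the same route as the paper's. Idempotents, and hence products and pointwise limits of them, so all of $\Gamma$, fix every point of a regular fibre; and commutation with $\aut$ carries the action on $e\pe^{-1}(0)$ to every fibre over $\aute(0)=\Xe^{sing}$. You spell out this second step, which the paper leaves implicit, as injectivity of restriction via $g=ge$. Your reading of $X\backslash X_0$ as $eX\backslash X_0$ (equivalently, the regular fibres) is the right one: elements of $\Gamma\subset eEe$ map into $eX$, so they cannot fix singular-fibre points outside $eX$.
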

\begin{proof}
As idempotents act trivially on regular points, so  $J_{min}$ acts trivially on the complement of $X_0$. By covariance the action of an idempotent on $e\pe^{-1}(0)$ determines uniquely its action on $e\pe^{-1}(\xi)$ for all $\xi \in \aute(0)$.
\end{proof}
The following theorem generalises some statements in 
\cite[Section 3]{ESBS}. In particular, we relax the requirement that the only singular fibres are a  $\mathbb Z$-orbit in $\Xe$,  demanding instead that it is a USF system, and obtain results analogous to \cite[Corollary 3.7]{ESBS}. The requirement that a system has no extra spectrum allows us to obtain results similar to \cite[Thm 3.14]{ESBS}. 

\begin{theorem} \label{thm-prod}
If $(X,\sigma,T)$ is a USF system, then the following are equivalent:
\begin{enumerate}
\item $(X,\sigma,T)$ has no extra spectrum,
\item 
$  \overline{\text{ncl}_{\Gg^{fib}_0}(\Gamma_0)} =\Gg^{fib}_0$.
\end{enumerate}
Under these conditions
\begin{enumerate}
\item[(3)] $\mu$ is surjective and
$$\Gg^{fib} \cong    (\Gg^{fib}_0)^{\Xe/\aute} .  $$
\end{enumerate}
\end{theorem}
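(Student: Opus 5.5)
The plan is to work through the injective continuous morphism $\mu:\Gg^{fib}\to Cov(\prod_{\xi\in\Xe}\Gg^{fib}_\xi)$ and to use the USF hypothesis to say what the little structure group looks like inside this product.

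\emph{Step 1: $\Gamma$ inside the product.} By Lemma~\ref{lem-little}, $\Gamma$ acts trivially on every fibre $e\pe^{-1}(\xi)$ with $\xi\notin\aute(0)$. On the orbit $\aute(0)$ an element of $\Gamma$ is determined, via covariance \eqref{eq-cov-0}, by its restriction to $e\pe^{-1}(0)$. Hence $\mu(\Gamma)$ consists of covariant tuples that are supported on $\aute(0)$ and whose $0$-component lies in $\Gamma_0$.

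\emph{Step 2: normal closure by conjugation.} Conjugating by $g\in\Gg$ with $\tpe(g)=\eta$ moves the support from $\aute(0)$ to $\aute(0)+\eta$. It also conjugates the components by the bijection $e\pe^{-1}(0)\to e\pe^{-1}(\eta)$ from the proof that $\Gg^{fib}_\xi\cong\Gf_0$.

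For (2)$\Rightarrow$(1), I would combine conjugation by elements of $\Gg^{fib}$ (which acts on the $0$-component through $\Gg^{fib}_0$) with conjugation by elements $g$ of varying $\tpe(g)$. Taking products, $\text{ncl}_{\Gg}(\Gamma)$ is then seen to contain, for each $\aute$-orbit $\Xe$-coset, covariant tuples supported on that single orbit with arbitrary component in $\text{ncl}_{\Gg^{fib}_0}(\Gamma_0)$. Finite products over finitely many orbits followed by a pointwise limit give density in $Cov(\prod_\xi\Gg^{fib}_\xi)$, where the topology is pointwise convergence. Since $\mu(\Gg^{fib})$ is compact, hence closed, this shows simultaneously that $\mu$ is surjective and that $\overline{\text{ncl}_\Gg(\Gamma)}=\Gg^{fib}$.

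For (1)$\Rightarrow$(2), I would restrict to the $0$-component. This restriction is a continuous surjective homomorphism $\Gg^{fib}\to\Gg^{fib}_0$. Conjugation of a tuple supported on $\aute(0)$ either preserves that orbit or moves it off the fibre over $0$. So the $0$-components of elements of $\text{ncl}_\Gg(\Gamma)$ lie in $\text{ncl}_{\Gg^{fib}_0}(\Gamma_0)$ (conjugators coming from $\Gg$ with $\tpe(g)\in\aute$ act via an automorphism composed with an element of $\Gg^{fib}_0$). Continuity then gives (2).

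\emph{Step 3: the product formula.} A covariant tuple is uniquely determined by freely choosing one component in each $\aute$-orbit, identified with $\Gg^{fib}_0$ via a chosen transversal and the bijections of Lemma above. This gives $\Gg^{fib}\cong(\Gg^{fib}_0)^{\Xe/\aute}$.

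\emph{Main obstacle.} The delicate point is that covariance constrains components within an $\aute$-orbit. One must check that conjugating by an $\alpha\in\aut$ acting on the $0$-fibre produces no tuple whose $0$-component lies outside $\Gg^{fib}_0$. Conversely, in Step 2 it must be verified that the choice of conjugating elements $g$ respects the covariance identification, i.e.\ that the identification $\Gg^{fib}_\xi\cong\Gg^{fib}_0$ is compatible with the $\aute$-action modulo inner elements. I would handle this by using that $\aut$ commutes with $E$, so conjugation by $g$ intertwines covariance.
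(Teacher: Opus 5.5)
Your proposal is essentially the paper's proof. You get (1)$\Rightarrow$(2) by restricting to $e\pe^{-1}(0)$, treating conjugators $g$ with $\tpe(g)\in\aute(0)$ exactly as the paper does (writing $g\gamma g^{-1}=gf^{-1}\gamma fg^{-1}$ with $f\in\aut$, $\tpe(f)=\tpe(g)$, since automorphisms commute with $E$), and (2)$\Rightarrow$(3) via conjugates of $\Gamma$ supported on single $\aute$-orbits, finite products and closure. The only difference is organisational: you read off (2)$\Rightarrow$(1) directly from that density argument plus compactness of $\Gg^{fib}$ and injectivity of $\mu$, which is a little more direct than the paper's detour through the product decomposition.
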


\begin{proof}
``$1 \Rightarrow 2$":
Suppose that $(X,\sigma)$ has no  extra spectrum so that  $\overline{\text{ncl}_{\Gg}(\Gamma)}=\Gg^{fib}$.  We transfer the information in $\overline{\text{ncl}_{\Gg}(\Gamma)}$ to information about  $\overline{\text{ncl}_{\Gg^{fib}_0}(\Gamma_0)} $ as follows.

    The elements of $\text{ncl}_{\Gg}(\Gamma)$ are products of elements of the form $g \gamma g^{-1}$ where $\gamma\in \Gamma$ and $g\in\Gg$. We look at one such element. If $\gamma$ is nontrivial, then, by Lemma~\ref{lem-little},
    $\res_{e\pe^{-1}(0) }(g \gamma g^{-1})$ acts non-trivially only if $\tpe(g)\in \aute(0)$.  If $g \in \aut(X)$ then it commutes with $\gamma$ so $  \res_{e\pe^{-1}(0) }(g \gamma g^{-1}) = \res_{e\pe^{-1}(0) }( \gamma )$. If $\tpe(g)\in \aute(0)$ but 
 $g\notin\aut$ we choose $f\in\aut(X)$ such that $\tpe(g)=\tpe(f)$. Then  $g \gamma g^{-1} = g  f ^{-1}\gamma f g^{-1}$ and $\tpe(fg^{-1})=0$. This gives the first equality in
    \begin{equation}\label{eq:big-to-small}\res_{e\pe^{-1}(0) }(\text{ncl}_{\Gg}(\Gamma))=\res_{e\pe^{-1}(0) }(      \text{ncl}_{\Gg^{fib}}(\Gamma)                     )=             \text{ncl}_{\Gg^{fib}_0}(\Gamma_0)       ,\end{equation} with the second equality coming from the fact that the restriction of $\Gamma$ to $\pe^{-1}(0)$  is $\Gamma_0$.
     As the map $\res_{e\pe^{-1}(0) }$ is continuous, we have $\res_{e\pe^{-1}(0) }(\overline{\text{ncl}_{\Gg}(\Gamma)})= \overline{\res_{e\pe^{-1}(0) }(\text{ncl}_{\Gg}(\Gamma))}$, so that (1) 
 and ~\eqref{eq:big-to-small} gives
    $$ \Gg^{fib}_0 = \res_{e\pe^{-1}(0) }(   \Gg^{fib}      )= \res_{e\pe^{-1}(0) }(\overline{\text{ncl}_{\Gg}(\Gamma)})=    \overline{ \text{ncl}_{\Gg^{fib}_0}(\Gamma_0)}.$$

``$2 \Rightarrow 3$":      
Consider an element  of the form $f:=g\gamma g^{-1}$, $\gamma\in\Gamma$, $g\in \Gg$. As before we see that $f$ only acts nontrivially on $e\pe^{-1}(\xi)$ if $\xi$ belongs to  $\aute$-orbit of ${\pe}_*(g)$. In other words, $f$ acts trivially on the complement of $X_{{\pe}_*(g)}$. Moreover, since it commutes with the automorphism group, the action of $f$ on $X_{{\pe}_*(g)}$ is uniquely determined by its action on $e\pe^{-1}({\pe}_*(g))$. We have seen that our assumption that the system has no extra spectrum is equivalent to  
$   \overline{   \text{ncl}_{ \Gg_0^{fib} }(\Gamma_0) }    = \Gg^{fib}_0$. 
Thus
$\res_{e\pe^{-1}(\xi) }(g   \,   \overline{\text{ncl}_{\Gg^{fib}_0}(\Gamma_0)}  \, g^{-1}) = \Gg^{fib}_\xi$. Therefore, given any $h\in \Gg^{fib}_\xi$ there exists $f\in\Gg^{fib}$ such that $\mu(f)(\xi) = h$ and   $\mu(f)(\xi') = \mathrm{id}$
provided $\xi'\notin\aute(\xi)$. Given any finite subset $\{\xi_1,\cdots,\xi_k\}$ of $\Xe$ which contains from each $\aute$-orbit at most one point and for each $\xi\in F$ a choice of element $h_i\in \Gg^{fib}_{\xi_i}$, taking the product $f=\prod_i f_i$ of the corresponding $f_i$ we obtain an element which, restricted to $X_{\xi_i}$ act as the chosen element $h_i$, that is, $\mu(f)(\xi_i) = h_i$. Taking the topological closure of such products we obtain all possible covariant functions, that is, $\mu$ is surjective. Continuity of $\mu$ is clear.
Since all groups $\Gg^{fib}_\xi$ are isomorphic to  $\Gg^{fib}_0$
we find  $\Gg^{fib} \cong  (\Gg^{fib}_0)^{\Xe/\aute}$.

``$2  \Rightarrow 1$":
Suppose that 
$  \overline{\text{ncl}_{\Gg^{fib}_0}(\Gamma_0)} =\Gg^{fib}_0$.
   We just showed that this implies $\Gg^{fib} \cong     \overline{   \text{ncl}_{ \Gg_0^{fib} }(\Gamma_0) }       ^{\Xe/\aute}$. With the USF condition, this shows that 
 $         \text{ncl}_{ \Gg^{fib} }(\Gamma) =  \text{ncl}_{ \Gg_0^{fib} }(\Gamma_0)        \times \id $                 where $\id$ is the identity on $\prod_{[\xi]\neq [0]\in \Xmax/\aute} \Gg^{fib}_{[\xi]}$. The direct product decomposition also shows that 
$\overline{ncl_\Gg(\Gg^{fib}_0\times\id)}=\Gg^{fib}$. Finally, 
      $$ncl_\Gg  (\Gamma)=ncl_\Gg (ncl_{   \Gg^{fib}  }(\Gamma)) = ncl_{\Gg } (\Gg^{fib}_0\times\id)$$
allows us to conclude that there is no extra spectrum.
\end{proof}
\subsection{The algebraic structure of $\Gg$ for USF systems}  
Recall that the structure group $\Gg$ fits into the exact sequence
\begin{equation}\label{eq-SESg}
\Gg^{fib} \hookrightarrow  \Gg \stackrel{\widetilde\pe} \twoheadrightarrow \Xe \, .
\end{equation}
We now use the information about $\Gg^{fib}$ to determine the algebraic structure of $\Gg$ in the case that $\Gg_0^{fib}$ is maximal, that is, contains all permutations of $e\pe^{-1}(0)$. Unlike in \cite{ESBS} we do not use a splitting section for \eqref{eq-SESg} here, as the existence of such a section depends on  the axiom of choice and we do not have an explicit construction of it. 
Let $\Gg^\xi = \{f\in \Gg: \widetilde\pe (f) = \xi\}$ so that $\Gg = \bigsqcup_{\xi\in \Xe} \Gg^\xi$. In particular, $\Gg^0=\Gg^{fib}$.
Let $B(\eta,\xi)$ denote the set of bijective maps from $e\pe^{-1}(\eta)$ to $e\pe^{-1}(\xi)$. We can identify an element $g\in \Gg^\xi$ with the function $\tilde g$ on $\Xe$ whose value at $\eta$ is the reduction of  $g$ to  
a bijection from $e\pe^{-1}(\eta)$ to $e\pe^{-1}(\eta+\xi)$,
$$\tilde g(\eta) = g\left|_{ e\pe^{-1}(\eta)\to e\pe^{-1}(\eta+\xi)}\right.  \in B(\eta,\eta+\xi).$$
Furthermore, $\tilde g$ is covariant in the sense that it satisfies \eqref{eq-cov-0}. 
\begin{lemma}\label{lem-alg-iso}
Let $(X,\sigma)$ be a USF system with no extra spectrum. If  $\Gg_0^{fib}$ is maximal, then
 $\Gg$ is algebraically isomorphic to the set of all covariant functions $\tilde g$ on $\Xe$ such that $\tilde g(\eta) \in B(\eta,\eta+\xi)$, $\xi= \widetilde\pe (g)$,
with product structure 
$$(\tilde g\tilde g')(\eta) = \tilde g(\eta+\zeta) \tilde g'(\eta).$$
\end{lemma}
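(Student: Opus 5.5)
We prove the lemma by showing that the map $g\mapsto \tilde g$ is an injective group morphism and that its image is exactly the set of covariant functions described in the statement. Here $\tilde g(\eta)$ is the restriction of $g\in\Gg^\xi$ to a bijection $e\pe^{-1}(\eta)\to e\pe^{-1}(\eta+\xi)$.

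\emph{Well-definedness, injectivity and multiplicativity.} Elements of $\Gg=eEe$ are determined by their action on $eX=\bigcup_\eta e\pe^{-1}(\eta)$. Indeed, for $g\in\Gg$ we have $g=ge$, so $g(y)=g(ey)$ for every $y\in X$. Hence $g\mapsto\tilde g$ is injective. The group $\Gg$ acts by bijections on $eX$ (its inverse in $\Gg$ restricts to the inverse bijection), and it maps fibres to fibres with shift $\widetilde\pe(g)$. So $\tilde g(\eta)\in B(\eta,\eta+\xi)$. Covariance follows because $\Gg$ commutes with $\aut$. For $g\in\Gg^\zeta$ and $g'\in\Gg^{\xi'}$, the product $gg'$ lies in $\Gg^{\zeta+\xi'}$, since $\widetilde\pe$ is a morphism by \eqref{eq-SESg}. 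Restricting $gg'$ to $e\pe^{-1}(\eta)$ first applies $g'$, landing in $e\pe^{-1}(\eta+\xi')$, and then applies $g$ there. This gives the stated product formula, where the shift in the first argument is the $\widetilde\pe$-value of the right factor. (I read the $\zeta$ in the statement as that value.)

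\emph{Surjectivity.} Let $\tilde h$ be a covariant function with $\tilde h(\eta)\in B(\eta,\eta+\xi)$. Since $\widetilde\pe:\Gg\to\Xe$ is onto, pick $g\in\Gg^\xi$. The function $\eta\mapsto \tilde g(\eta)^{-1}\tilde h(\eta)$ is a bijection of $e\pe^{-1}(\eta)$. It is covariant, because $\tilde g$ and $\tilde h$ both are and $\alpha_{eq}$ commutes with translation in $\Xe$. It therefore lies in $Cov(\prod_{\eta}\Ff(e\pe^{-1}(\eta)))$ and consists of permutations of each fibre. By Theorem~\ref{thm-prod}(3), $\mu$ is surjective onto $Cov(\prod_\eta\Gg^{fib}_\eta)$. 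We must also check that each fibre permutation lies in $\Gg^{fib}_\eta$. Maximality of $\Gg^{fib}_0$ means it is the full permutation group of $e\pe^{-1}(0)$. The lemma preceding Lemma~\ref{lem-little} conjugates $\Gg^{fib}_0$ onto $\Gg^{fib}_\eta$ via a bijection, so every $\Gg^{fib}_\eta$ is the full symmetric group of $e\pe^{-1}(\eta)$. Hence there is $f\in\Gg^{fib}$ with $\tilde f(\eta)=\tilde g(\eta)^{-1}\tilde h(\eta)$ for all $\eta$. Then $gf\in\Gg^\xi$ and $\widetilde{gf}=\tilde h$ by the product formula.

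\emph{Main obstacle.} The delicate point is the covariance bookkeeping. We need covariance of $\tilde g^{-1}\tilde h$ to hold in exactly the form \eqref{eq-cov-0} required for membership in $Cov(\prod\Gg^{fib}_\eta)$. This means checking that $\alpha\,\tilde g(\eta)\,\alpha^{-1}=\tilde g(\alpha_{eq}(\eta))$ is compatible with the shifted target fibres. That check uses that $\aute$ acts on $\Xe$ by translations, which commute with $+\xi$. Everything else is a direct consequence of Theorem~\ref{thm-prod}.
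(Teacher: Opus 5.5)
Your proposal is correct and follows essentially the same route as the paper. You identify $g\in\Gg^\xi$ with its fibrewise restrictions and read off the product rule, with $\zeta=\widetilde\pe(g')$ as you note. You then get surjectivity by correcting an arbitrary $g\in\Gg^\xi$ by an element of $\Gg^{fib}$, using Theorem~\ref{thm-prod}(3) together with maximality of $\Gg_0^{fib}$ transported to every fibre. Beyond that, you only spell out the injectivity and covariance checks that the paper leaves as ``easily seen''.
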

\begin{remark}We emphasise that this does not say anything about the topology of $\Gg$. \end{remark}
\begin{proof} 
It is easily seen that,
when identifying the elements of $\Gg = \bigsqcup_{\xi\in \Xe} \Gg^\xi$ 
with covariant functions on $\Xe$ as described above, the product of such functions becomes the one stated above. We claim that the identification is surjective. 
 By Theorem~\ref{thm-prod},  $\Gg^{fib}$ consists of all covariant functions with values in $\Gg_0^{fib}$. 
As $\Gg$ is invariant under left (and right) multiplication by elements of $\Gg^{fib}$, and $\Gg_0^{fib}$ contains all  permutations of $e\pe^{-1}(0)$, we see that all possible bijections of $B(\eta,\eta+\xi)$ can arise. 
\end{proof}

\subsection{The algebraic structure of the kernel $\M(X)$}
Recall that the kernel $\M(X)$ of $E(X)$ is its smallest bilateral ideal. It is a completely simple semigroup. By the Rees-Suskevitch theorem, it is isomorphic to a matrix semigroup $\M(I,\Gg,\Lambda,\tilde A)$ whose structure group is the group $\Gg$ discussed above, and which we also call  the structure group of the system. Furthermore,  $\M(X)$ contains the sub semigroup $\M^{fib}(X):=\M(X)\cap \Ef(X)$, which itself is completely simple and thus is isomorphic to a matrix semigroup. The two matrix semigroups are related. We claim that $\M^{fib}(X)$ is isomorphic to 
$\M(I,\Gg^{fib},\Lambda,\tilde A)$.\footnote{We mention that this result needs neither minimality nor USF.}  This follows from the fact that the 
sets $I$ and $\Lambda$, which label the right and the left ideals respectively, and the sandwich matrix $\tilde A$, are uniquely determined by the idempotents of $\M(X)$ and their products, and these lie in $\M^{fib}(X)$. In particular, the sandwich matrix $\tilde A$ takes values in the subgroup $\Gg^{fib}$ of $\Gg$. Related to that is the result that the little structure group $\Gamma$ is generated (as a group) by the entries of $\tilde A$ \cite{ESBS}.

For USF systems 
the situation is even simpler. The restriction of $\M^{fib}(X)$ to the fibre $\pe^{-1}(0)$, which we denote by $\M_0^{fib}(X)$, is also a completely simple semigroup and therefore isomorphic to a matrix semigroup. The idempotents act non-trivially only on the fibres $\pe^{-1}(\xi)$ with $\xi$ in the $\aute$-orbit of $0$. The sets $I$ and $\Lambda$ and the sandwich matrix $\tilde A$ can be determined through their action on $\pe^{-1}(0)$. $\M_0^{fib}(X)$ is therefore isomorphic to 
$\M(I,\Gg^{fib}_0,\Lambda,A)$, where $I$ and $\Lambda$ are the same as above and $\tilde A=(\tilde a_{\lambda,i})_{\lambda,i}$ is determined by $A=(a_{\lambda,i})_{\lambda,i}$ in the following way:
The restriction of  $\tilde a_{\lambda,i}$ to the fibres $\pe^{-1}(\xi)$ with $\xi$ in the $\aute$-orbit of $0$ is equal to $a_{\lambda,i}$ whereas otherwise 
$\tilde a_{\lambda,i}=e$.

 The above discussion,  together with Theorem~\ref{thm-prod}, gives
\begin{prop}\label{prop-alg-kernel}
Let $(X,\sigma)$ be a  USF system for which the topological  normal closure of $\Gamma_0$ contains all elements of the permutation group $S_{\pe^{-1}(0)}$ of $\pe^{-1}(0)$. Then the algebraic structure of the kernel $\M(X)$ of the Ellis semigroup is entirely determined by $\M_0^{fib}(X)$ and $\Xe$. More precisely, if $I$, $\Lambda$ and $A$ are such that 
$\M_0^{fib}(X)$ is isomorphic to the matrix semigroup $M(I,S_{\pe^{-1}(0)},\Lambda;A)$ then  $\M(X)$ is isomorphic to the matrix semigroup $M(I,\Gg,\Lambda;\tilde A)$ where 
$$\tilde a_{\lambda,i}(\xi) = \left\{\begin{array}{ll}
a_{\lambda,i} & \mbox{if } \xi \in \aute(0)\\
e & \mbox{otherwise}
\end{array}\right.$$
and $\Gg$ is as in Lemma~\ref{lem-alg-iso}.
\end{prop}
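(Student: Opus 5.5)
The proof assembles the preceding discussion with Theorem~\ref{thm-prod} and Lemma~\ref{lem-alg-iso}; no new construction is needed.

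\textbf{Step 1: the hypotheses of those results hold.} The hypothesis says that $\overline{\text{ncl}(\Gamma_0)}$ contains every permutation of $\pe^{-1}(0)$. This closure lies inside $\Gg^{fib}_0$, which in turn lies inside $S_{e\pe^{-1}(0)}$. Here I am tacitly using the paper's identification of $\pe^{-1}(0)$ with $e\pe^{-1}(0)$ in the statement. Hence
\[
\overline{\text{ncl}_{\Gg^{fib}_0}(\Gamma_0)}=\Gg^{fib}_0=S_{\pe^{-1}(0)},
\]
so $\Gg^{fib}_0$ is maximal. By Theorem~\ref{thm-prod} (2)$\Rightarrow$(1) the system has no extra spectrum. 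Theorem~\ref{thm-prod}(3) then gives $\Gg^{fib}\cong (S_{\pe^{-1}(0)})^{\Xe/\aute}$ through covariant functions. Lemma~\ref{lem-alg-iso} then describes $\Gg$ algebraically using only $\Xe$, $\aute$ and the finite set $\pe^{-1}(0)$.

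\textbf{Step 2: the Rees data.} By Rees--Suschkewitsch, $\M(X)\cong M(I,\Gg,\Lambda;\tilde A)$. Here $I$ and $\Lambda$ index the minimal right and left ideals, and $\tilde a_{\lambda,i}$ is obtained from products of idempotents. All idempotents of $\M(X)$ lie in $\M^{fib}(X)$, because they preserve fibres. So $\tilde A$ takes values in $\Gg^{fib}$, and $\M^{fib}(X)\cong M(I,\Gg^{fib},\Lambda;\tilde A)$. Restriction to the fibre over $0$ is a semigroup morphism $\M^{fib}(X)\to\M_0^{fib}(X)$. Using Lemma~\ref{lem-little}, I would show that it induces bijections between idempotents and between the $\Rr$- and $\mathcal L$-classes. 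The reason is that minimal idempotents act trivially off $X_0$, and by covariance their action on $X_0$ is determined by their action on $e\pe^{-1}(0)$. Thus the same index sets $I,\Lambda$ serve for $\M_0^{fib}(X)\cong M(I,S_{\pe^{-1}(0)},\Lambda;A)$, and $a_{\lambda,i}$ is the restriction of $\tilde a_{\lambda,i}$.

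\textbf{Step 3: reading off $\tilde a_{\lambda,i}$.} Viewed as a covariant function on $\Xe$, the element $\tilde a_{\lambda,i}$ lies in $\Gamma$. By Lemma~\ref{lem-little} it is the identity on every fibre outside $\aute(0)$. On $\pe^{-1}(0)$ it equals $a_{\lambda,i}$. On $\pe^{-1}(\alpha_{eq}(0))$ it is $\alpha a_{\lambda,i}\alpha^{-1}$, by covariance. This is exactly the displayed formula once fibres over the orbit are identified with $\pe^{-1}(0)$ via the automorphisms, which is the identification implicit in Lemma~\ref{lem-alg-iso}. Consequently the triple $(I,\Lambda,A)$ together with $\Xe$ determines $M(I,\Gg,\Lambda;\tilde A)$ up to isomorphism.

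\textbf{Main obstacle.} The delicate point is the identification of the Rees coordinates of $\M(X)$ and $\M_0^{fib}(X)$ in Step 2, i.e.\ showing that restriction to $\pe^{-1}(0)$ is injective on idempotents and preserves the normalisation of the sandwich matrix. To handle it, I would choose a base idempotent $e$ and normalise $\tilde A$ with respect to $e$, then transport that normalisation through the restriction map. Injectivity would follow from Lemma~\ref{lem-little} together with covariance.
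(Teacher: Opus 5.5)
Your proposal is correct and follows essentially the same route as the paper. The paper derives the proposition from the preceding discussion: the Rees data $I$, $\Lambda$, $\tilde A$ of $\M(X)$ are fixed by the minimal idempotents, which lie in $\M^{fib}(X)$ and, by the USF property and covariance, are determined by their action on the fibre over $0$; Theorem~\ref{thm-prod} and Lemma~\ref{lem-alg-iso} then supply $\Gg$. You also make explicit two points the paper leaves implicit: the hypothesis forces $\Gg^{fib}_0$ to be the full permutation group, hence no extra spectrum; and the formula for $\tilde a_{\lambda,i}$ on $\aute(0)$ only makes sense once the fibres in that orbit are identified via automorphisms.
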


If $\Gamma_0$ is smaller then the situation is more complicated. We refer the reader to \cite{ESBS} for a discussion of this case which is based on the choice of a splitting section for \eqref{eq-SESg}.

\subsection{Separation of idempotents and semi-regularity}
Let $\Cc_{S_{eX}}(\Gg)$ be the subgroup of permutations of $eX$ which commute with the structure group $\Gg=eEe$. Any automorphism $\alpha\in\aut(X)$ preserves $eX$, since it commutes with the elements of $E(X)$, and its restriction to $eX$ is simply $\alpha e$. Furthermore, an automorphism is uniquely determined by its restriction to $eX$, for if $x\in pX$ with $p$ some other idempotent from $L$, then $\alpha(x) = \alpha p e(x) = p\alpha(ex)$. The restriction map 
$$\aut(X)\ni \alpha\mapsto\alpha e\in C_{S_{eX}}(\Gg)$$ is thus injective. 
The system  $(X,\sigma,T)$ is called {\em semi-regular}  if this restriction map is an isomorphism, namely $\aut e =\{\alpha e:\alpha\in\aut\}= C_{S_{eX}}(\Gg)$ \cite{Auslander-Glasner,ESBS}. 
Note that $\Gg^{fib}$ is trivial if the system is almost automorphic. This implies that $\Gg = \Xe$ and hence $C_{S_{eX}}(\Gg)=\Xe$, showing that almost automorphic systems can only be semi-regular if they are equicontinuous. 
On the other hand, if $cr(X)>1$ then the little structure group cannot be trivial \cite{K25} so that  $C_{S_{eX}}(\Gg)$ looks quite different. 

\begin{lemma}\label{lem-ls}
Let $H,G$ be two commuting subgroups of $S_X$, the permutation group of some set $X$. Suppose that both act transitively and that one is abelian. Then $H=G$ and the groups act fixed point freely.
\end{lemma}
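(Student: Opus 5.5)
Assume without loss of generality that $G$ is abelian; $H$ only needs to be transitive. The plan has three steps: first show that $G$ acts freely, then embed $H$ into $G$, and finally get the reverse inclusion from transitivity of $H$. Free action of $H$ will then follow because $H=G$.

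\emph{Step 1: $G$ acts freely.} Fix $g\in G$ and suppose $g(x)=x$ for some $x\in X$. Take any $y\in X$. Since $H$ is transitive, there is $h\in H$ with $h(x)=y$. As $h$ commutes with $g$,
\[
g(y)=gh(x)=hg(x)=h(x)=y .
\]
So $g$ is the identity. Hence every non-trivial element of $G$ is fixed-point free. The same argument with the roles swapped shows that $H$ also acts freely as soon as $G$ is transitive.

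\emph{Step 2: $H\subset G$.} Fix a base point $x_0\in X$. By Step 1 and transitivity of $G$, the map $g\mapsto g(x_0)$ is a bijection $G\to X$. Given $h\in H$, let $g\in G$ be the unique element with $g(x_0)=h(x_0)$. We check that $h=g$ pointwise. Any $y\in X$ can be written $y=k(x_0)$ with $k\in G$. Then
\[
h(y)=hk(x_0)=kh(x_0)=kg(x_0)=gk(x_0)=g(y).
\]
Here the second equality uses that $H$ and $G$ commute, and the fourth uses that $G$ is abelian. Hence $h=g\in G$.

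\emph{Step 3: $G\subset H$.} Let $g\in G$. Since $H$ is transitive, there is $h\in H$ with $h(x_0)=g(x_0)$. By Step 2, $h\in G$. Both $g$ and $h$ lie in $G$ and agree at $x_0$, and $G$ acts freely by Step 1, so $g=h\in H$.

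Therefore $H=G$, and both groups act fixed point freely.

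There is no serious obstacle here. The only point that needs care is Step 2: it uses both that $H$ commutes with $G$ and that $G$ is abelian, and it needs freeness of $G$ so that $g$ is uniquely determined by $g(x_0)$.
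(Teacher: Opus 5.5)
Your proof is correct and is essentially the paper's argument: Step 2 is exactly the paper's proof of $H\subset G$, and Step 1 is its fixed-point argument, only placed first instead of last. The one difference is how you get $G\subset H$: you use freeness of $G$, whereas the paper observes that $H\subset G$ makes $H$ abelian and reruns the same argument with the roles of $H$ and $G$ swapped. Your remark that Step 2 needs freeness is also not quite right: the computation shows $h=g$ for \emph{any} $g\in G$ with $g(x_0)=h(x_0)$, so uniqueness is not needed there.
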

\begin{proof}
Suppose $G$ is abelian so that $H$ and $G$ commute with any $g\in G$. Let $h\in H$ and $x\in X$. As $G$ acts transitively, there is $g\in G$ such that $h(x) = g(x)$. It follows that, for all $f\in G$,
$hf(x)=fh(x) = fg(x) =gf(x)$. Thus $h=g\in G$, again by transitivity of $G$. 
Hence $H\subset G$, hence $H$ is abelian. Repeating the argument with the roles of $H$ and $G$ interchanged we find that $G\subset H$. 

Suppose now that $g(x)=x$. Then $g(h(x))=h(x)$ for all $h\in H$. Hence $g=e$ as $H$ acts transitively. 
\end{proof}


Recall that $Z(G)$ denotes the centre of the group $G$.
\begin{lemma} \label{lem-trans} If $(X,\sigma)$ is a  strongly USF  system with no extra spectrum,
then any point in a  singular fibre separates the idempotents of $L=Ee$. 
If, moreover, $\aut$ is abelian 
and $cr(X)>1$ 
then $\aut e = Z(\Gg)$ and $\Gg$ acts fixed point freely. 
\end{lemma}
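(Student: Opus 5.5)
The first claim reduces to the fibre over $0$, and I will prove it by transporting idempotents along automorphisms and then using the structure of $\Gg^{fib}$. By the strongly USF hypothesis, every point of $X_0=e\pe^{-1}(\Xe^{sing})$ is of the form $\alpha(x_0)$ with $x_0\in e\pe^{-1}(0)$ and $\alpha\in\aut$. Automorphisms commute with all of $E$, so for $p,q\in J_L$ we have $p(\alpha y)=q(\alpha y)$ if and only if $p(y)=q(y)$. It therefore suffices to show that points of $\pe^{-1}(0)$ separate $J_L$.

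Let $y\in\pe^{-1}(0)$ and $p,q\in J_L$ with $p(y)=q(y)$. Every point $z$ of a singular fibre satisfies $z=r(z)$ for some $r\in J_L$, since the images $rX$ cover $X$ as $L=\bigcup_r \Gg r$-type decomposition gives $X=\bigcup_{r\in J_L} rX$. So I may replace $y$ by a point of the form $r y'$ and consider the restrictions to $\pe^{-1}(0)$. There, $p$ and $q$ restrict to elements of $\M^{fib}_0(X)$, whose structure group is $\Gg^{fib}_0$.

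By Theorem~\ref{thm-prod}, NES gives $\overline{\text{ncl}_{\Gg^{fib}_0}(\Gamma_0)}=\Gg^{fib}_0$. Moreover, $\Gg^{fib}_0$ acts transitively on $e\pe^{-1}(0)$; I will argue this from minimality of $L$ together with covariance, and I expect it to be the delicate point. The plan is then as follows. Write $q=pq\cdot$(normal inverse data). Since $pq$ and $p$ lie in the same $\Rr$-class, $pq=p\,g$ for some $g$ in the $\Gg^{fib}$-part. From $p(y)=q(y)$ one obtains $pq(y)=q(y)=p(y)$. Using that $L$ is a left ideal, $q\,p=q$ and $p\,q=p$ hold as $\mathcal L$-equivalence in $L$. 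I would then show that $p(y)=q(y)$ forces $p$ and $q$ to agree on the whole $\Gg^{fib}_0$-orbit of $y$, hence on all of $\pe^{-1}(0)$. This uses the fact that $g p g^{-1}$-conjugates again lie in $J_L$ after multiplication by $e$, and the transitivity. By Lemma~\ref{lem-little} the idempotents act trivially off $X_0$ and are covariant, so $p=q$. This step is the main obstacle. If the direct argument fails, I would instead use Lemma~\ref{lem-Rr}(3) and show $R^0_{\ev_x}\subset\Rr$ via the matrix description of Proposition~\ref{prop-alg-kernel}.

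For the second claim, assume $\aut$ is abelian and $cr>1$. I will apply Lemma~\ref{lem-ls} to a set on which both $\aut e$ and a suitable group act transitively. Take the set $X_0$, which by the strongly USF hypothesis is a single $\aut$-orbit. The group $\aut e$ commutes with $\Gg$, so $\aut e\subset C_{S_{eX}}(\Gg)$. In the other direction, any element $c$ of $Z(\Gg)$ maps $e\pe^{-1}(0)$ into $eX$ and commutes with $\Gf$. Because $\Gamma$, and hence $\Gf$, acts nontrivially exactly on $X_0$ (Lemma~\ref{lem-little}, with $\Gamma$ nontrivial since $cr>1$), $c$ must preserve $X_0$. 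Applying Lemma~\ref{lem-ls} to $H=Z(\Gg)|_{X_0}$ and $G=\aut_0$, which commute, with $G$ abelian and transitive and $H$ transitive because $\tpe(Z(\Gg))\supset\aute(0)$, gives $Z(\Gg)|_{X_0}=\aut_0$. Since elements of $\Gg$ are determined by their restriction together with $\tpe$, this yields $Z(\Gg)=\aut e$. Finally, $\Gg$ acts fixed point freely: if $g(x)=x$, then $g$ commutes with the transitive group $\aut_0$ on $X_0$, so $g$ is the identity on $X_0$, and by the previous identification $g=e$.
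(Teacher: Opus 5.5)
The proposal has genuine gaps in both parts. The first claim is never actually proved, and the second rests on a circular transitivity claim together with a false determination claim.

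For the first claim, the step you call the main obstacle is left open, and the route through $\Gg^{fib}_0$ cannot close it. Elements of $\Gg^{fib}$ do not commute with idempotents: moving $y$ by $g\in\Gg^{fib}$ only shows that the conjugates $gpg^{-1},gqg^{-1}$ agree at $gy$, not that $p,q$ do. The fallback via Proposition~\ref{prop-alg-kernel} needs a maximality hypothesis on $\Gamma_0$ that the lemma does not assume. The missing idea is the commutation with $\aut$ that you already wrote down, used to spread agreement rather than to move points. Strongly USF says that all of $X_0=e\pe^{-1}(\Xe^{sing})$, including $e\pe^{-1}(0)$, is a single $\aut$-orbit. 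So if $p(x)=q(x)$ for one $x\in X_0$, then $p(\alpha x)=\alpha p(x)=\alpha q(x)=q(\alpha x)$ for all $\alpha\in\aut$, i.e.\ $p=q$ on $X_0$. On $e\pe^{-1}(\Xe^{reg})$ both are the identity. Hence $p|_{eX}=q|_{eX}$ and $p=pe=qe=q$. A point $z$ of a singular fibre outside $eX$ reduces to $ez\in X_0$ via $p(z)=p(ez)$ and $q(z)=q(ez)$. This is the paper's argument, and it uses neither NES nor Theorem~\ref{thm-prod}.

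For the second claim, applying Lemma~\ref{lem-ls} to $H=Z(\Gg)|_{X_0}$ is circular. Transitivity of $Z(\Gg)$ on $X_0$ presupposes that for each $\alpha\in\aut$ some central element agrees with $\alpha$ on $X_0$, which is the hard half of $\aut e=Z(\Gg)$. You only establish $\aut e\subset C_{S_{eX}}(\Gg)$, never $\aut e\subset\Gg$. In any case, ``$\tpe(Z(\Gg))\supset\aute(0)$'' would not give transitivity inside $e\pe^{-1}(0)$.

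The paper instead applies Lemma~\ref{lem-ls} on each $X_\xi=e\pe^{-1}(\aute(\xi))$ to $\Gg^{sing}_\xi$ and $\aut_\xi$. Here $\Gg^{sing}_\xi$ is transitive for free by minimality, so this yields $\Gg^{sing}_\xi=\aut_\xi$ and freeness on each $X_\xi$. It then uses NES through Theorem~\ref{thm-prod}(3) --- $\Gf\cong(\Gf_0)^{\Xe/\aute}$, with independent values on distinct $\aute$-orbits --- together with $cr>1$. This forces central elements into $\Gg^{sing}$ and to coincide with a single automorphism on all of $eX$.

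That product structure also refutes your claim that elements of $\Gg$ are determined by their restriction to $X_0$ and $\tpe$: some $g\in\Gf$, $g\neq e$, is trivial on $X_0$ but not on a regular $X_\xi$. Two consequences follow.
\begin{itemize}
\item Your passage from $Z(\Gg)|_{X_0}=\aut_0$ to $Z(\Gg)=\aut e$ is unjustified. For a central $c$ it can be repaired by transport: if $c=\alpha$ on $X_0$ and $\tpe(h)=\xi$, then $c(hx)=h(cx)=h(\alpha x)=\alpha(hx)$, so $c=\alpha$ on $X_\xi=h(X_0)$.
\item Your last step, ``$g$ is the identity on $X_0$, hence $g=e$'', fails outright. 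Freeness has to come from Lemma~\ref{lem-ls} on each $X_\xi$ separately.
\end{itemize}

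Finally, $\Gf$ does not act nontrivially only on $X_0$: the conjugates $g\gamma g^{-1}$, $\gamma\in\Gamma$, lie in $\Gf$ and act on $X_{\tpe(g)}$. To see that a central $c$ preserves $X_0$, use $\Gamma$ itself, which is trivial off $X_0$ by Lemma~\ref{lem-little}, together with injectivity of $c$ on $eX$.
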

\begin{proof}
Let $X_\xi=  e\pe^{-1}( \aute (\xi))$. 
Let $x,y\in X_0$ and $p,q\in J_L$. As $\aut(X)$ acts transitively on $X_0$ there is an automorphism $\varphi$ such that $y=\varphi(x)$. Since automorphisms commute with elements of $L$, we have $p(x)=q(x)$ iff $p(y)=q(y)$. This shows that $p$ and $q$ are equal if they agree on $eX$, as  $p$ and $q$ must act as the identity on regular fibres. But then they also agree on $X$, as $p = pe$ and $q=qe$.

Let $\Gg^{sing}$ be the pre-image of $\aute(0)$ under the map 
$\Gg\stackrel{\tpe}\to \Xe$.   
As the system is strongly USF,  ${X_0}$ is an orbit under $\aut$. Therefore $\Gg^{sing}$ is the subgroup of $\Gg$ which preserves 
$X_\xi$ for any $\xi\in \Xe$. 
Restricting its elements to $X_\xi$, we obtain a subgroup of $S_{X_\xi}$ which we denote by $\Gg^{sing}_\xi$. The restriction of the elements of $\aut$ to $X_\xi$, denoted $\aut_\xi$,  also yields  a subgroup of 
$S_{X_\xi}$. Both groups act transitively, $\Gg^{sing}_\xi$ by minimality, and $\aut_\xi$ by assumption. By assumption, $\aut_\xi$ is abelian and therefore equal to $\Gg^{sing}_\xi$ by Lemma~\ref{lem-ls}. This lemma also implies that $\Gg^{fib}$ acts fixed point freely and therefore all of $\Gg$ acts fixed point freely.

By Theorem~\ref{thm-prod}, $\mu$ maps $\Gf$ to the set of  
all functions $\tilde g$ which satisfy covariance. 
Let $h\in \Gg$ and $g\in \Gf$. Then $h g = g h$ iff 
$\forall \xi\in \Xe \forall x\in e\pe^{-1}(\xi)$, 
\begin{equation}\label{eq-h} 
h \tilde g(\xi)(x) =\tilde g(\xi+{\pe}_*(h)) h(x)
\end{equation} 
(here $\tilde g = \mu(g)$).
This equation means that $\tilde g(\xi+(\pe)_*(h))$ is uniquely determined by $\tilde g(\xi)$ through $h$. But since  $\tilde g(\xi+(\pe)_*(h))$ and $\tilde g(\xi)$ are only constrained through covariance we can chose arbitrary elements from $\Gf_\xi$ and $\Gf_{\xi+(\pe)_*(h)}$ for   $\tilde g(\xi)$ and $\tilde g(\xi+(\pe)_*(h))$ if ${\pe}_*(h)\notin \aute (0)$. Since $\Gf_\xi$ is non-trivial if $cr>1$ this is a contradiction and hence 
${\pe}_*(h)\in \aute (0)$. A necessary condition that $h$ belongs to the centre of $\Gg$ is then that it belongs to $\Gg^{sing}$.  
On the other hand, if ${\pe}_*(h)\notin \aute (0)$ then  \eqref{eq-h} says that a necessary condition for $g$ to belong to the centre of $\Gg$ is that 
$\tilde g(\xi+{\pe}_*(h))=h\tilde g(\xi)h^{-1}$. Composed with the isomorphisms $\Gg^{sing}_\xi=\aut_\xi\cong\aut$ we can see $\tilde g$ as a covariant function from $\Xe$ to $\aut$ and then the last equation holding for any $h$ implies that $\tilde g$ must be constant. Hence $Z(\Gg)=\aut e$.
\end{proof}
Note that under the assumptions of Lemma~\ref{lem-trans},  the system $(X,\sigma)$ is semi-regular.

\begin{cor}\label{cor:fcr}
Let $(X,\sigma,T)$, $(X',\sigma',T)$ be  strongly USF systems with no extra spectrum.
Assume that their coincidence ranks are  strictly larger than $1$, and that they have an abelian automorphism group. If $(X',\sigma')$ is related to  $(X,\sigma)$ by an Ellis morphism then 
$(X',\sigma')$ is a factor of $(X,\sigma)$.
\end{cor}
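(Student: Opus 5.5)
The plan is to verify the three hypotheses of Theorem~\ref{thm-main} for suitable choices of $x\in e\pe^{-1}(0)$ and $x'\in e'\pe^{-1}(0)$. Theorem~\ref{thm-main} then produces a factor map $\phi$ with $\phi_*=\Phi$, so $X'$ is a factor of $X$. Both systems have no extra spectrum, so $\tPhe$ is defined and Lemma~\ref{lem-fibs} gives $\Phi(\Gg^{fib})={\Gg'}^{fib}$.

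Hypothesis (\ref{as-1}) follows directly from the first statement of Lemma~\ref{lem-trans}: in a strongly USF system with no extra spectrum, every point of a singular fibre separates $J_L$.

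For hypothesis (\ref{as-2}), the second part of Lemma~\ref{lem-trans} (which uses $\aut$ abelian and $cr>1$) says that $\Gg$ acts fixed point freely on $eX$. Hence $\stab_{\Gg}(x)=\{e\}$, and $\Phi(e)=e'$ lies in $\stab_{\Gg'}(x')$. So (\ref{as-2}) holds for any choice of $x$ and $x'$, and I take arbitrary $x\in e\pe^{-1}(0)$, $x'\in e'\pe^{-1}(0)$.

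The main obstacle is hypothesis (\ref{as-3}), that $\tPhe$ maps regular points to regular points. I would use the centre. Since $\Phi:\Gg\to\Gg'$ is surjective, it maps $Z(\Gg)$ into $Z(\Gg')$. By Lemma~\ref{lem-trans}, $Z(\Gg)=\aut e$ and $Z(\Gg')=\aut' e'$. The singular set $\Xe^{sing}=\aute(0)$ is exactly $\tpe(Z(\Gg))$, and likewise for $X'$.

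Suppose $\xi$ is regular but $\tPhe(\xi)=\alpha'_{eq}(0)$ is singular. Pick $h\in\Gg$ with $\tpe(h)=\xi$. Then $\Phi(h)$ and $\alpha'e'$ have the same image in $X'_{eq}$. Hence $\Phi(h)=\alpha'e'\,g'$ with $g'\in{\gf}'$, and by Lemma~\ref{lem-fibs} we may write $g'=\Phi(g)$ with $g\in\Gg^{fib}$. Thus, after adjusting $h$ by an element of $\Gg^{fib}$, we may assume $\Phi(h)=\alpha'e'$ is central in $\Gg'$.

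So $h$ commutes with every element of $\Gg$ modulo $\ker\Phi$. From the commutation analysis \eqref{eq-h} in the proof of Lemma~\ref{lem-trans}, when $\tpe(h)\notin\aute(0)$ one can choose $g\in\Gg^{fib}$, using Theorem~\ref{thm-prod}, such that the commutator $[h,g]$ equals an arbitrary element of $\Gg^{fib}_\xi$ at one fibre and the identity elsewhere. The aim is to show these commutators are not all killed by $\Phi$, which would contradict the centrality of $\Phi(h)$.

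This last step is the delicate point. It needs some control of $\ker\Phi\cap\Gg^{fib}$, and I would try to get it from the product structure $\Gg^{fib}\cong(\Gg^{fib}_0)^{\Xe/\aute}$ together with $\Phi(\Gamma)=\Gamma'$. The idea is to compare the $\aute$-orbits that the kernel can cover with the nontrivial action of $\Gamma'$, which occurs only on $\aute'(0)$, and conclude that $\Phi$ cannot send non-central elements onto central ones in the way required.

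Once (\ref{as-3}) is established, Theorem~\ref{thm-main} completes the proof.
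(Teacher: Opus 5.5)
Your handling of hypotheses (1) and (3) of Theorem~\ref{thm-main} is exactly the paper's: (1) comes from the first part of Lemma~\ref{lem-trans}, and (3) from $\Gg$ acting fixed point freely.

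\textbf{The gap.} Hypothesis (2), $\tPhe(\Xe^{\,reg})\subset (X'_{eq})^{reg}$, is never proved. Your last two paragraphs are a programme, not an argument. The key step is also misstated: for $g\in\Gg^{fib}$ supported on a single orbit $\eta_0+\aute(0)$, the commutator $[h,g]$ is supported on \emph{two} orbits, $\eta_0+\aute(0)$ and $\eta_0+\xi+\aute(0)$, not on one.

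The paper handles (2) with the centre argument you started from. It uses $Z(\Gg)=\aut e$, asserts that ``an epimorphism maps centres onto centres'', and concludes $\tPhe(\aute(0))=\aute'(0)$. You were right not to stop there. A surjection only maps $Z(\Gg)$ \emph{into} $Z(\Gg')$, which gives singular $\mapsto$ singular, the wrong direction. Even equality of the images would give (2) only if $\ker\tPhe\subset\aute(0)$.

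\textbf{What your idea does give.} It can be completed when $\xi=\tpe(h)$ has infinite order in $\Xe/\aute(0)$.
\begin{enumerate}
\item $\Phi(h^n)$ is also central, so $h^ngh^{-n}g^{-1}\in\ker\Phi$ for all $n$.
\item The orbits $\eta_0+n\xi+\aute(0)$ are pairwise distinct, so $h^ngh^{-n}\to e$ pointwise. Since $\ker\Phi\cap\Gg^{fib}$ is closed, $g\in\ker\Phi$.
\item The finitely supported elements are dense (Theorem~\ref{thm-prod}), so $\Gg^{fib}\subset\ker\Phi$.
\item Then ${\Gg'}^{fib}=\Phi(\Gg^{fib})$ is trivial. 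This contradicts $cr(X')>1$, since ${\Gg'}^{fib}$ acts transitively on $e'\pe^{-1}(0)$.
\end{enumerate}
If instead $m\xi\in\aute(0)$ for some $m\ge 2$, you only obtain kernel elements spread over $m$ orbits. When $cr=2$, $\Gg^{fib}_0=S_2$ is abelian, and nothing isolates a single orbit.

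\textbf{The torsion case is a genuine obstruction.} Take $X=X_{TM}$, so $\Xe=\Z_2$, $\aute(0)=\Z$ and $\tfrac13\in\Z_2$. Let $M$ be a minimal subset, under the diagonal shift, of $Z=\{(y_1,y_2,y_3)\in X^3:\pe(y_2)=\pe(y_1)+\tfrac13,\ \pe(y_3)=\pe(y_1)+\tfrac23\}$.
\begin{itemize}
\item[(a)] $f\mapsto (f\times f\times f)|_M$ is an Ellis morphism $E(X)\to E(M)$. It induces the identity on $\Z_2$, which is again the maximal equicontinuous factor of $M$.
\item[(b)] Lifting proximal pairs along the coordinate projections shows that every point of $\tfrac13\Z$ is singular for $M$. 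So the regular point $\tfrac13$ of $X$ is sent to a singular point, and (2) fails.
\item[(c)] $M$ is not a factor of $X$. A factor map $X\to M$ followed by the first projection is an endomorphism of the coalescent Thue--Morse shift, so it would be a conjugacy. But $M$ has three $\Z$-orbits of singular points and $X$ has one.
\item[(d)] $(y_1,y_2,y_3)\mapsto (y_2,y_3,\sigma y_1)$ is an automorphism of $Z$ whose induced rotation is $+\tfrac13$. The number of minimal subsets of $Z$ divides $8$, so this automorphism fixes some $M$. Such an $M$ is USF with $cr(M)>1$.
\end{itemize}

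So (2) does not follow from the existence of $\Phi$ together with USF and $cr>1$. A proof would have to use NES and commutativity of $\aut(X')$ in an essential way. Neither your sketch nor the paper's one-line argument does this, and it is not clear that these hypotheses exclude every such $M$ and its factors.
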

\begin{proof}
Lemma~\ref{lem-trans} tells us that  any point in a  singular fibre separates the idempotents of $L=Ee$, so 
Assumption (1) of Theorem~\ref{thm-main} is assured.
  
The second part of  Lemma~\ref{lem-trans} implies that $\Phi$ maps $\aut$ to $\aut'$, as a group epimorphism maps centers onto centers. It follows that $\tPhe(\aute)=\aute'$ which implies assumption (2) of Theorem~\ref{thm-main}. 
Finally, Lemma~\ref{lem-trans} says that $\Gg$ acts fixed point freely. Therefore all its point stabilizers are trivial. Hence  assumption (3) of Theorem~\ref{thm-main} is trivially satisfied.
\end{proof}
Almost automorphic systems are not covered by the last corollary, as in this case we cannot argue with the center of the structure group to insure Assumption (2) of Theorem~\ref{thm-main}. Nevertheless we obtain a similar result under a stronger assumption.
\begin{cor}\label{cor:cr1}
Let $(X,\sigma,T)$, $(X',\sigma',T)$ be  strongly USF systems with coincidence rank $1$. 
 Suppose that  $\aute'$ is trivial, i.e. the singular points of $X'_{eq}$ are a single $T$-orbit. 
If $(X',\sigma')$ is related to $(X,\sigma)$ by an Ellis morphism, then $(X',\sigma')$ is a factor of $(X,\sigma)$.
\end{cor}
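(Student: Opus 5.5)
The plan is to verify the three hypotheses of Theorem~\ref{thm-main} and then apply it. Before that, we need the no extra spectrum hypothesis for both systems. With $cr=1$ every fibre restricted to $eX$ is a single point: $e\pe^{-1}(\xi)$ consists of one element. Hence $\Gg^{fib}$ is trivial, and so is $\Gamma\subset\Gg^{fib}$. Then $\overline{\text{ncl}_{\Gg}(\Gamma)}=\{e\}=\Gg^{fib}$, so both systems are NES, and in fact $\Gg\cong\Xe$ via $\tpe$. In particular $\tPhe$ is defined and is a continuous group epimorphism $\Xe\to X'_{eq}$. Under the identifications $\Gg\cong\Xe$ and $\Gg'\cong X'_{eq}$, it coincides with the restriction of $\Phi$ to $\Gg$.

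Assumption (\ref{as-1}) follows from the first part of Lemma~\ref{lem-trans}, which only needs strong USF and NES: every point in a singular fibre separates $J_L$. Assumption (\ref{as-2}) is immediate. Since $e\pe^{-1}(0)$ is the single point $x$ and $\Gg\cong\Xe$ acts on $eX$ by translation of fibres, an element of $\Gg$ fixing $x$ must satisfy $\tpe(g)=0$. It therefore lies in $\Gg^{fib}=\{e\}$, so $\stab_{\Gg}(x)$ is trivial. Its image under $\Phi$ is $\{e'\}\subset\stab_{\Gg'}(x')$.

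The main work is assumption (\ref{as-3}), namely $\tPhe(\Xe^{reg})\subset X'^{reg}_{eq}$, and this is where the hypothesis on $\aute'$ enters. I argue by contraposition. Suppose $\xi$ is regular but $\tPhe(\xi)$ is singular. Since $\aute'$ is trivial, $X'^{sing}_{eq}$ is the $T$-orbit of $0'$, so $\tPhe(\xi)=\delta'^t(0')$ for some $t\in T$. Because $\tPhe$ is $T$-equivariant, $\tPhe(\xi-\delta^t(0))=0'$, and $\xi-\delta^t(0)$ is still regular because regularity is $T$-invariant. So it suffices to show that the kernel of $\tPhe$ contains only singular points, and more precisely lies in $\aute(0)$.

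For this I use idempotents. Since $0'$ is singular, $L'$ has at least two idempotents $p'\neq q'$, which differ on the singular fibre over $0'$. Since $\Phi$ maps $J_L$ onto $J_{L'}$ (every minimal idempotent of $L'$ is the image of one in $L$), there are $p,q\in J_L$ with $\Phi(p)=p'$ and $\Phi(q)=q'$. Take $\eta\in\ker\tPhe$ and $g\in\Gg$ with $\tpe(g)=\eta$. Then $\Phi(g)=e'$, so $\Phi(g^{-1}pg)=p'$ in the appropriate sense of conjugation within $L$. Conjugating by $g$ transports the set of fibres on which the idempotents act nontrivially, namely $\aute(0)$, to $\aute(0)-\eta$. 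Comparing the action on the singular fibre over $0'$, I aim to conclude that $\eta\in\aute(0)$, which is the expected main obstacle.

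If this direct comparison is awkward, I would instead use continuity. The set $\ker\tPhe$ is a closed subgroup, and the singular set $\aute(0)$ is countable-ish. A nontrivial closed subgroup meeting regular points would force the image of a regular sequence $\sigma^{t_\nu}\to p$ to be nontrivial in a way incompatible with $\Phi(p)=p'\neq q'$. With (\ref{as-3}) established, Theorem~\ref{thm-main} yields the factor map $\phi$ with $\Phi=\phi_*$.
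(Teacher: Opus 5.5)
\textbf{The gap is Assumption~(2), and it cannot be closed: the statement is false as written.} Your treatment of NES and of Assumptions~(1) and~(3) is fine and matches the paper.

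Your reduction of (2) is correct, and sharper than the paper's. Under the other hypotheses, (2) holds if and only if $\ker\tPhe$ contains no regular point. But you never prove this. With $cr=1$ the conjugation idea is vacuous: $g^{-1}pg\in\Gg^{fib}=\{e\}$ for every $p\in J_L$, so in fact $\Phi(g^{-1}pg)=e'$, not $p'$. The continuity paragraph is not an argument.

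Here is a counterexample. Let $\alpha$ be irrational, and take $X=X_{\alpha,\{\alpha\}}$ and $X'=X_{2\alpha,\{2\alpha\}}$ from Section~\ref{ex:Sturmian}: rotations by $\alpha$ and by $2\alpha$, each with a cut of type~1.
\begin{itemize}
\item Both systems are strongly USF with $cr=1$. Their singular sets are the single $\Z$-orbits $\Z\alpha$ and $2\Z\alpha$.
\item Set $\Phi(\sigma^n)={\sigma'}^{n}$ and $\Phi(p^\pm,z)=({p'}^{\pm},2z)$. This is a surjective, equivariant monoid morphism.
\item $\Phi$ is continuous. Each $\sigma^n$ is isolated, and convergence to $(p^\pm,z)$ means one-sided convergence of the angle, which doubling preserves locally.
\item So $\Phi$ is an Ellis morphism with $\tPhe(z)=2z$, and the regular point $1/2$ lies in $\ker\tPhe$.
\end{itemize}
Yet $X'$ is not a factor of $X$. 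A factor map would induce $z\mapsto 2z+c$ on the MEFs. On the co-countable set of $\xi$ for which both $\xi$ and $2\xi+c$ are regular, its sliding block code evaluated at $\omega_\xi$ must equal the $X'$-coding $\omega'_{2\xi+c}(0)$. The left side, as a function of $\xi$, jumps only at points of $\Z\alpha$. The right side jumps at $-c/2$ and at $-c/2+1/2$. Hence both points lie in $\Z\alpha$, so $1/2\in\Z\alpha+\Z$, which is impossible.

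The paper's own proof of (2) has the same hole. It argues that equivariance maps the $T$-orbit of $\pe(x)$ to that of $\pe(x')$, and that this ``forces'' regular points to go to regular points. That tacitly assumes $\tPhe$ is injective.

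The statement becomes correct once you add that hypothesis, for instance when $\Phi$ is an Ellis isomorphism, which is the case relevant to conjugacy. Then $\Phi$ restricts to a conjugacy from $(L,\sigma_L)$ onto $(L',\sigma'_{L'})$, so $\tPhe$ is bijective. Your reduction then finishes at once, because $\ker\tPhe=\{0\}$ and $0$ is singular.
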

\begin{proof}
As the coincidence rank equals 1, both systems have no extra spectrum.  As in the proof of Corollary~\ref{cor:fcr} we see that Assumptions (1) and (3) are satisfied. 
 We claim that Assumption (2) follows from the requirement that  
${X'}_{eq}^{sing}$ is a single orbit under the $T$-action. By $T$-equivariance $\tPhe $ then maps the $T$-orbit of $\pe(x)$ to the $T$-orbit of $\pe(x')$. This forces regular points in $\Xe$ to be mapped to regular points. \end{proof}

\section{Examples}\label{sec:Examples}
We provide examples to which our theory can be applied. This is to say that inside the class of these examples we can say that two systems are conjugate if and only if their Ellis semigroups are {related by an Ellis  isomorphism}. As Ellis semigroups are usually more complicated than dynamical systems, the strength of our result does not so much lie in the fact that we can prove that two systems are conjugate or not by comparing their Ellis semigroups. But it is interesting to see that there are non-conjugate dynamical systems which have \emph{algebraically} isomorphic Ellis semigroups. Our theory then tells us that the algebraic isomorphism cannot be continuous. 

Many of our examples are substitutional dynamical systems. For a brief recap of these systems, see Section~\ref{sec:substitutions}.

\subsection{Rudi and Morse}
The Thue-Morse substitution $a\mapsto ab$, $b\mapsto ba$ and the 
Rudin-Shapiro substitution $a\mapsto ac$, $b\mapsto dc$,  $c\mapsto ab$, $d\mapsto db$ define shift dynamical systems $(X_{RS},\sigma)$ and $(X_{TM},\sigma)$ which satisfy all 
conditions of Corollary~\ref{cor:fcr}. For, it can be verified that they have concidence rank $2$. By Lemma~\ref{lem:unique-singular-fibre}, they are a USF system.  Furthermore, each of their automorphism groups is a semidirect product of $\Z/2\Z$ with  $\Z$, so they are strongly USF. That they have no extra spectrum can be seen by Examples~\ref{ex:RS} and ~\ref{ex:TM},  and Corollary~\ref{cor-main-qbij}.

 As we will see in Section ~\ref{ESQB}, their Ellis semigroups are algebraically isomorphic. However, as is well known, these two systems are not conjugate. Indeed, while the continuous component of the spectrum of the Thue-Morse system is purely singular, that of the Rudin-Shapiro system is purely absolute \cite{Baake-Grimm}. Alternatively, one can compute explicitly that there is no conjugacy between them, as in~\cite{CQY}. It therefore follows from Corollary~\ref{cor:fcr} that the 
algebraic isomorphism cannot be continuous. 

\subsection{$(\al, \cut)$-Sturmian systems (2-cut Sturmian systems)}
\label{ex:Sturmian} 
Consider three real numbers $\al$, $\cut$, $\phi$. Taken modulo $\Z$ we interpret $\al$ and $\phi$ as angles in $\mathbb S^1=\R/\Z$ and call 
$\al$ the {\em rotation} angle and $\phi$ the {\em parameter} angle. $\cut$ is supposed to lie in $(0,1)$ and is called the cut value. These systems have been studied in \cite{PS}.

An $(\al,\cut)$-Sturmian sequence with parameter $\phi$ is a two-sided sequence in $\{a,b\}^\Z$ of the form $\omega_\phi^+$ or $\omega_\phi^-$ where 
$$\omega_\phi^+(n) = \left\{\begin{array}{ll}
\!\!	a & \!\! \mbox{if } 0 \leq \{n\al +\phi\} < \cut \\
\!\!	b &\!\! \mbox{otherwise} \end{array}
\right.\!\!\!\!\!\! \mbox{ , and }\:\:\:\:
\omega_\phi^-(x) = \left\{\begin{array}{ll}
\!\!	a & \!\!\mbox{if } 0 < \{n\al +\phi\} \leq \cut \\
\!\!	b &\!\! \mbox{otherwise.} \end{array}
\right.$$
We look here only at the cases in which $\alpha$ is irrational so that the sequences are aperiodic. Note that if $\{n\al+\phi\}\notin \{0,\kappa\}$ then 
$\omega_\phi^+(n) = \omega_\phi^-(n)$. The set of these sequences form a closed shift invariant subspace $X_{\al,\cut}$ of $\{a,b\}^\Z$ (with product topology) on which the shift acts minimally. The associated minimal dynamical system $(X_{\al,\cut},\sigma)$ has the following well-known properties:
\begin{enumerate}
\item Its MEF is $(S^1,+\al)$ with the Euclidean topology and factor map $\pe:X_{\al,\cut}\to S^1$ is given by $\pe(\omega_\phi^\pm) = \phi$. 
A point $\phi$ is singular if $\omega_\phi^+(n) \neq \omega_\phi^-(n)$ for some $n$. The latter happens if $n\al+\phi = 0$ or $n\al+\phi =\kappa$. 
Hence a singular fibre contains two points whereas a regular fibre contains a single point.
\item If $\cut \in \al\Z+\Z$, we say that $\kappa$ is of type $1$, and then there is only one orbit of singular points and the system is strongly USF. Otherwise there are two orbits of singular points.
\item If $\cut \in \frac12(\al\Z+\Z)$, we say that $\kappa$ is of type $2$, then the automorphism group $\aut$ is generated by $\omega_\phi^\pm \mapsto \omega_{\phi+\cut}^\pm$. In particular, 
$\aute(0) = \frac12(\al\Z+\Z)$ mod $\Z$ and the system is strongly USF.  
\item In all other cases we say that $\kappa$ is of type $3$. There are then two orbits of singular points but $\aut = \Z$ and the system is not USF. 
\item If $\omega_\phi^+$ and  $\omega_\phi^-$ are not equal they form a proximal pair. 
Moreover, they differ only at one or two positions, i.e., they are {\em bi-asymptotic}. These are the only proximal pairs.
\end{enumerate}
We now describe the Ellis semigroup. Given that the system is almost automorphic and all proximal pairs are asymptotic, the Ellis semigroup is {\em nearly simple} \cite{Barge-Kellendonk} and must have the algebraic form
$$E(X_{\al,\cut}) \cong \Z\sqcup ( LZ_n\times S^1) $$
where $LZ_n$ is the left zero semigroup of $n$ elements, with $n$ being the number of minimal idempotents, and the product is defined by
$$ \sigma^n (p,z) = (p,z+\al),\quad (p,z)(q,w) = (p,z+w). $$
As there are at most 2 singular fibre orbits and a singular fibre has 2 points, there can be at most $n=4$ idempotents. In particular, the system is tame, i.e., its Ellis semigroup has cardinality $|\mathbb R|$.
 
The calculation of the Ellis semigroup for Sturmian systems was discussed in \cite[Example 14.10]{GlasnerMegrelishvili2006} and generalised in \cite{Aujogue}.  
A sequence $(\sigma^{n_\nu})_\nu$ converges in the pointwise topology in the following three situations:
\begin{enumerate}
\item The sequence of integers $(n_\nu)_\nu$ is bounded and hence converges to an integer $n$. Then $\lim \sigma^{n_\nu} = \sigma^n \in \Z$.
\item The sequence of integers $(n_\nu)_\nu$ is unbounded but the sequence of angles $\al n_\nu\in S^1$ converges to a point $z\in S^1$ from above, that is, almost all  $\al n_\nu\geq z$ (w.r.t.\ the order in a small neighbourhood of $z$ which is induced by $\R$). In this case  $\lim \sigma^{n_\nu} = (p^+,z)$ where $p^+$ is the idempotent given by $p^+(\omega_\phi^\pm) = \omega^+_\phi$.
\item The sequence of integers $(n_\nu)_\nu$ is unbounded but $\al n_\nu\in S^1$ converges to a point $z\in S^1$ from below. 
In this case  $\lim \sigma^{n_\nu} = (p^-,z)$ where $p^-$ is the idempotent given by $p^-(\omega_\phi^\pm) = \omega^-_\phi$.
\end{enumerate}
The topology of $E(X_{\al,\cut})$ can be described by saying which sequences in $E(X)$ converge. A sequence $(p_\nu,z_\nu)\subset LZ_2\times S^1$ converges to a point $(p,z)\in LZ_2\times S^1$ if either $z_\nu$ converges to $z$ from above, in which case we have $\lim (p_\nu,z_\nu) = (p^+,z)$, or $z_\nu$ converges to $z$ from below, in which case we have $\lim (p_\nu,z_\nu) = (p^-,z)$. Furthermore, a sequence $(\sigma^{n_\nu})_\nu$ converges to $\sigma^n$ or 
$(p^+,z)$ or $(p^-,z)$ under the same conditions on $(n_\nu)_\nu$ as above. 

We thus see that $E(X_{\al,\cut})$ depends only on $\al$. 

We see from the above analysis that there are always only 2 idempotents and that points in singular fibres separate idempotents. As the system is almost automorphic, Assumption (3) of Theorem~\ref{thm-main} is  alsosatisfied. However, Assumption (2) holds only if $\cut'$ is of type 1, or $\cut$ and $\cut'$ are of type 2. In these cases we can apply Theorem~\ref{thm-main} to deduce the existence of a factor map from $(X_{\al,\cut},\sigma)$ to $(X_{\al,\cut'},\sigma)$.

\subsection{Coverings of substitutions}\label{sec:coverings}
We thank Lorenzo Sadun for sharing with us preliminary results about the following construction, which works in fact in much greater generality \cite{Sadun}.
Let $\theta$ be an aperiodic primitive substitution on the alphabet $\Aa$, let $G$ be a finite group and 
$q:\Aa^*\to G$ a morphism. We suppose that, for all $a\in \Aa$, 
\begin{equation}\label{eq-cond-q}
   q(\theta(a)) = q(a).    
\end{equation}
We define a new substitution $\tilde \theta$ on $\tilde\Aa:=\Aa\times G$ as follows. If $\theta(a) = a_1\cdots a_n$ then
$$\tilde\theta(a,g) = (a_1,g_1)\cdots (a_n,g_n),\quad g_{k+1} = g_k q(a_k),\quad g_1 = g.$$
Using \eqref{eq-cond-q},
 one verifies that if 
$\theta(ab) = a_1\cdots a_{n+m}$ then  
$\tilde\theta((a,g)(b,gq(a)) = (a_1,g_1)\cdots (a_{n+m},g_{n+m})$ with $ g_1 = g$ and $g_{k+1} = g_k q(a_k)$ for  $1\leq k \leq n+m-1$.  Let $X$ be the shift space defined by $\theta$ and $\tilde X$ be the shift space defined by $\tilde \theta$. This implies that the substitution shift $\tilde X$ is a $|G|$-to-1 cover of $ X$, notably  $\tilde X\cong  X\times G$, with a conjugacy being given by $(a_n,g_n)_n\mapsto ((a_n)_n,g_0)$. Here we have the shift action on the left whereas on the right the action is $((a_n)_n,g_0)\mapsto ((a_{n+1})_n,g_0q(a_0))$. The examples in  \cite{BARGE-BRUIN-Jones-Sadun:2012} are all of this nature.

If $\aut$ is the automorphism group  of $ (X,\sigma)$, then the automorphism group $\tilde\aut$ of $(\tilde X,\tilde\sigma)$ contains $\aut\times G$, where the second factor acts by left multiplication on the second factor of $X\times G$. Note that this means that there are $h$-to-1 covers of $ X$ which are not obtained in this way. Indeed, there are constant height $h>1$ suspensions $\tilde X$ over a constant length substitution system $ X$ where $\tilde\aut=\aut=\mathbb Z$ \cite{CQY}. 

Different choices for $G$ and $q$ lead to different systems, indeed, even the coincidence rank may change. Of course, $q(a) = e$, with $e$ the neutral element of $G$, is always a possibility, but this one is not very interesting, as $\tilde X$ then decomposes into a disjoint union of copies of the original system. 
We are interested in the cases in which the new substitution is primitive, so that the cover is a minimal system. This requires that $q(\Aa)$ generates $G$, but the latter does not guarantee minimality. For example, take the substitution $a\mapsto abb, b \mapsto acb , 
c \mapsto abc$. Take $G= \mathbb Z/3 \mathbb Z$, and take $q(a)=a(b)=q(c)=1$. Then for any element of $\tilde X$, one only ever sees one fixed $(a,i)$ with $i$ depending on $x$.
We call $(G,q)$ non-trivial if $\tilde\theta$ is primitive. 
It may be that one has to go to a higher power of the substitution to obtain a non-trivial $(G,q)$. 
Recall that one of our assumptions is that $X$ is strongly USF. 
The following lemma shows that this property is inherited by  coverings of substitutions. Note that we do not need  $(G,q)$ to be non-trivial. We use $cr(\theta)$ to denote the coincidence rank of $X$.

\begin{lemma} Suppose that $\tilde X$ is a covering of $X$ defined by $(G,q)$.  There a natural number $h$ (the created height) such that 
$ \tilde X_{eq}$ is an $h$-fold covering of $\Xe$ and $ cr(\tilde X) = cr (X)\frac{|G|}{h} $.
Furthemore, if $(X,\sigma)$ is strongly USF then $(\tilde X,\sigma)$ is also strongly USF.
\end{lemma}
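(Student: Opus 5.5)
The strategy is to read everything off the skew-product description $\tilde X\cong X\times G$, with action $\tilde\sigma(x,g)=(\sigma x, g\,q(x_0))$. This is a group extension of $(X,\sigma)$ by the cocycle $c(x)=q(x_0)$, and $G$ acts on it by left multiplication. Write $\tilde\pi:\tilde X\to X$ for the projection.

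\textbf{Step 1: the equicontinuous factor.} Since $\tilde X$ is minimal over $X$ and $\tilde\pi$ is a finite group extension, the maximal equicontinuous factor $\tilde X_{eq}$ maps onto $\Xe$. I claim it is a finite cover of $\Xe$, because a group extension by a finite group can only add finitely many eigenvalues, namely roots of those of $X$. More concretely, let $K\subset G$ be the subgroup of elements $k$ for which $(x,g)\mapsto(x,gk)$ induces the identity on $\tilde X_{eq}$. Equivalently, take $K$ to be the stabiliser in $G$ of the induced action of $G$ on $\tilde X_{eq}$: this action is by translations commuting with $T$, so it factors through an abelian quotient. Then $\tilde X_{eq}\to\Xe$ has degree $h=[G:K]$, and $h$ is the created height. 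For constant-length substitutions this agrees with the usual height computation via coincidences of $\tilde\theta$.

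\textbf{Step 2: the coincidence rank.} A $\tilde\pi_{eq}$-fibre over $\tilde\xi$ is the preimage of the $\pe$-fibre over its image $\xi$, intersected with a $K$-coset worth of $G$-sheets. Proximality in $\tilde X$ of $(x,g)$ and $(y,g')$ forces $x$ and $y$ to be proximal in $X$. Conversely, along a net realising that proximality, the cocycle values agree eventually (by local constancy of $q$ on the cylinders where $x$ and $y$ coincide), so $g^{-1}g'$ stays constant. Hence a maximal family of pairwise non-proximal points in a $\tilde X$-fibre is obtained from a maximal non-proximal family in $X$ (of size $cr(X)$) times the $|G|/h$ sheets lying over a fixed point of $\tilde X_{eq}$. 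This gives $cr(\tilde X)=cr(X)\,|G|/h$. The delicate point is showing that distinct sheets in the same $\tilde X_{eq}$-fibre are never proximal. I would get this from the fact that left multiplication by $G$ consists of automorphisms, and a nontrivial automorphism fixing an MEF fibre is distal on it because it acts freely.

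\textbf{Step 3: strong USF.} A singular point of $\tilde X_{eq}$ projects to a singular point of $\Xe$, since proximal pairs project to proximal pairs, or to equal points, which by the sheet argument above would not be proximal. Conversely, every preimage of a singular point is singular: lift a proximal pair of $X$ and use the $G$-action. Now let $\tilde x,\tilde y$ lie in $\tilde e\,\tilde\pe^{-1}(\tilde X_{eq}^{sing})$. Their images lie in $e\pe^{-1}(\Xe^{sing})$, so by the strong USF property of $X$ there is $\alpha\in\aut$ with $\alpha(\tilde\pi\tilde x)=\tilde\pi\tilde y$. This $\alpha$ should lift to an automorphism $\tilde\alpha$ of $\tilde X$ after composing with a suitable $g\in G$. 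Lifting automorphisms is the main obstacle: it requires $q$ to be compatible with $\alpha$, i.e.\ $q\circ\alpha$ cohomologous to $q$ up to conjugation. I would try to obtain this from the substitution structure, using that automorphisms of substitution shifts are induced by letter-to-word sliding block codes commuting with a power of $\theta$ together with condition~\eqref{eq-cond-q}. Failing that, I would weaken to showing that $\aut\times G$ acts transitively on the relevant set using only $G$ and $\sigma$-translates. Once a lift exists, the $G$-action adjusts the sheet, and transitivity on $\tilde X_0$ follows.
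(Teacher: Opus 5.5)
\textbf{Steps 1 and 2.} Your Step~1 is a legitimate alternative to the paper's appeal to Sacker--Sell. Note that $G$ must act by \emph{left} multiplication; your first definition of $K$ via $(x,g)\mapsto(x,gk)$ should use $kg$. Then $\tilde X_{eq}/G$ is an equicontinuous factor of $\tilde X/G=X$ through which $\pe$ factors. Hence $\tilde X_{eq}/G\cong\Xe$, and since nontrivial translations act freely, $h=[G:K]$.

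Step~2, however, only proves $cr(\tilde X)\ge cr(X)|K|$. Maximality is asserted, not shown, and the claim that $g^{-1}g'$ stays constant along the net is false for non-abelian $G$: it gets conjugated along stretches where $x$ and $y$ agree. A short fix: a minimal idempotent $\tilde u\in E(\tilde X)$ commutes with $G$, so $\tilde u\tilde X=uX\times G$ with $u=pr_*\tilde u$. Since $\tilde u$ is injective on pairwise non-proximal sets, $cr(\tilde X)=|\tilde u\,\tilde\pi_{eq}^{-1}(\tilde\xi)|=cr(X)|K|$. The paper instead compares fibre cardinalities.

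\textbf{Step 3: the genuine gap.} This step cannot be closed as the statement stands. Your fallback fails whenever $cr(X)>1$: $\langle\tilde\sigma\rangle\times G$ meets each $\tilde e\,\tilde\pi_{eq}^{-1}(\tilde\xi)$ in at most $|K|$ of its $cr(X)|K|$ points. The paper gets transitivity from the inclusion $\aut\times G\subset\tilde\aut$, which it asserts without proof when coverings are introduced. Your cohomological worry is justified: that inclusion, and with it the strong USF conclusion, fail in general.

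Take $\theta\colon a\mapsto abbbb$, $b\mapsto baaaa$, $G=\Z/4\Z$, $q(a)=1$, $q(b)=2$. Then $q(\theta(a))=9\equiv 1$ and $q(\theta(b))=6\equiv 2$, and $\tilde\theta$ is primitive. Here $\Xe=\Z_5$, $cr(X)=2$, and the letter swap $\alpha$ exchanges the two points of $e\pe^{-1}(0)$, so $X$ is strongly USF.

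\emph{$\alpha$ does not lift.} By minimality the automorphisms of $\tilde X$ over $\mathrm{id}_X$ are exactly $G$, so any lift of $\alpha$ normalises $G$. It therefore has the form $(x,g)\mapsto(\alpha x,\varphi(g)+b(x))$ with $b(\sigma x)-b(x)=q(\alpha(x)_0)-\varphi(q(x_0))\equiv 1 \pmod 2$. That would be an eigenvalue $-1$ of $X$, which does not exist.

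\emph{No automorphism of $\tilde X$ does the job either.} No sliding block code can send $(x,g)$ to $(\alpha x,g')$. Along the two orbits the second coordinates differ at time $n$ by $n+\mathrm{const}\pmod 2$. On the other hand, $\tilde\theta$ has height $1$: the letter $(a,0)$ sits at positions $0,7,17$ of $\tilde\theta^\infty(a,0)$. So $\tilde X$ has no eigenvalue $-1$, and every word occurs at positions of both parities.

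Consequently $h=1$, $cr(\tilde X)=8$, $\tilde\aut=\langle\tilde\sigma\rangle\times G$, and $\tilde X_0$ consists of two $\tilde\aut$-orbits. The last assertion therefore needs an extra hypothesis, e.g.\ that every automorphism of $X$ lifts to $\tilde X$. This is automatic when $\aut(X)$ is generated by the shift, as in the period-doubling and Fibonacci examples. Under that hypothesis your Step~3 goes through as outlined.
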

\begin{proof} 
Fix a minimal idempotent $e\in E(X)$ and set $\tilde e = e\times \tt{e}$  where $\tt{e}$ is the neutral element in $G$.   
Let $pr$ denote the projection map, in the first coordinate, sending $\tilde X$ to $X$.
By maximality of the MEF there is a factor map $pr_{eq}$ making 
the  diagram 
\[
\begin{tikzcd}
\tilde X \arrow{r}{pr} \arrow[swap]{d}{\tilde \pi_{eq}} & X \arrow{d}{\pe} \\
\tilde X_{eq}\arrow{r}{pr_{eq}}  & \Xe
\end{tikzcd}
\]
commute.
As $\tilde X_e$ is equicontinuous, and $\tilde X$, $X$ have finite coincidence rank, ${pr_{eq}} $ is $h$-to-$1$ for some $h$  \cite{Sacker-Sell:1974}. Furthermore $pr$ is $|G|$-to-$1$. It follows that the size of the fibre $\tilde\pi_{eq}^{-1}(\tilde \xi)$ is $|G|/h$ times the size of the fibre $\pe^{-1}(\xi)$ where $\xi = pr_{eq}(\tilde \xi)$. This implies the first statement. Since singular fibres are characterised by having strictly more points than the coincidence  rank, this also shows that the pre-image under $pr$ of $eX^{sing}$ is precisely $\tilde e \tilde X^{sing}$. Since $\aut\times G\subset \tilde\aut$, any lift under $pr$ of an orbit of $\aut$ lies in an orbit of $\tilde\aut$.  This implies that $\tilde e \tilde X^{sing}$ is contained in an orbit of $\tilde\aut$. But this orbit cannot be larger, as automorphisms preserve the singular points. 
\end{proof}
\begin{lemma}\label{lem-K}
Suppose that $(G,q)$ is non-trivial and $\tilde\aut = \aut\times G$. Then there is a normal subgroup $K$ of $G$ such that $\tilde\aut^{fib} = \aut^{fib}\times K$ and $|G/K| = h$, the created height.   
\end{lemma}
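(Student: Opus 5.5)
The plan is to compute the fibre-preserving automorphisms of $\tilde X\cong X\times G$ directly, and to read off $K$ from the way $\tilde\aut\to\aute$ factors through $\tilde X_{eq}$. Since $\tilde\aut=\aut\times G$, every automorphism of $\tilde X$ has the form $(\alpha,g)$ with $\alpha\in\aut$ and $g\in G$. In the coordinates $\tilde X\cong X\times G$ it acts by $(x,g_0)\mapsto(\alpha(x),gg_0)$. We have to decide when $(\alpha,g)$ preserves $\tilde\pi_{eq}$-fibres, i.e.\ when $(\alpha,g)_{eq}$ is trivial on $\tilde X_{eq}$.

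First I show that $\alpha\in\aut^{fib}$ is necessary. The factor $pr$ intertwines $(\alpha,g)$ with $\alpha$, so $pr_{eq}\circ(\alpha,g)_{eq}=\alpha_{eq}\circ pr_{eq}$. If $(\alpha,g)_{eq}=\mathrm{id}$, then $\alpha_{eq}$ is trivial on the image of $pr_{eq}$. This image is all of $\Xe$, so $\alpha\in\aut^{fib}$. Next, the map $\{e\}\times G\to\tilde\aut\to\tilde\aute\subset\tilde X_{eq}$ is a group homomorphism. Let $K$ be its kernel. Then $K$ is normal in $G$, and $\{e\}\times K\subset\tilde\aut^{fib}$. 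Its image lies in $\ker pr_{eq}$, because $G$ acts trivially after projecting to $X$. This kernel is a subgroup of $\tilde X_{eq}$ of order $h$, since $pr_{eq}$ is an $h$-to-$1$ group homomorphism once base points are chosen compatibly.

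To get $|G/K|=h$, I need the image of $G$ to fill all of $\ker pr_{eq}$. For this I use non-triviality. Because $\tilde\theta$ is primitive, $\tilde X$ is minimal. Take a fibre $\tilde\pi_{eq}^{-1}(\tilde\xi)$ and the $h$ points over $\xi=pr_{eq}(\tilde\xi)$. The set $pr^{-1}(\pe^{-1}(\xi))=\pe^{-1}(\xi)\times G$ is the union of the $h$ fibres over these points. $G$ acts on this set by left multiplication. Since each $(e,g)$ commutes with the shift, $G$ permutes these $h$ fibres, and the action is through translations by elements of $\ker pr_{eq}$. The action is transitive because every $\tilde\pi_{eq}$-fibre over $\xi$ meets $\{x\}\times G$ for a fixed $x$, and $G$ is transitive on $\{x\}\times G$. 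Hence the image of $G$ is all of $\ker pr_{eq}$, and $|G/K|=h$.

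It remains to show $\aut^{fib}\times\{e\}\subset\tilde\aut^{fib}$; together with the above this gives $\tilde\aut^{fib}=\aut^{fib}\times K$. This is the step I expect to be the main obstacle. For $\alpha\in\aut^{fib}$, the automorphism $(\alpha,e)$ maps each $\tilde\pi_{eq}$-fibre into the union of the fibres lying over the same point of $\Xe$. So $(\alpha,e)_{eq}=:c\in\ker pr_{eq}$. From the previous paragraph there is $k\in G$ with $(e,k)_{eq}=c$, so $(\alpha,k^{-1})\in\tilde\aut^{fib}$. To conclude, I still need to adjust the splitting so that the $G$-component lies in $K$. My first attempt is to show that $\alpha\mapsto c$ is a homomorphism $\aut^{fib}\to\ker pr_{eq}\cong G/K$. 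This gives $\tilde\aut^{fib}$ as the graph-twisted subgroup $\{(\alpha,k):kK=c(\alpha)^{-1}\}$, which has the required cardinality $|\aut^{fib}|\,|K|$. I would then argue that $c$ is trivial, using that $\aut^{fib}$ acts on $e\pe^{-1}(0)$ and commutes with the fibre structure coming from the fixed points of $\tilde\theta$. If that fails, I would interpret the statement as an isomorphism $\tilde\aut^{fib}\cong\aut^{fib}\times K$ via this twisted splitting.
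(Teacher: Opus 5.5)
\textbf{Verdict: there is a genuine gap, at the step you flag yourself.} Your first step matches the paper's argument exactly: projecting a $\tilde\pi_{eq}$-fibre to a $\pe$-fibre gives $\tilde\aut^{fib}\subseteq\aut^{fib}\times G$. Taking $K$ to be the kernel of $g\mapsto(e,g)_{eq}$ is also fine.

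The gap is that you never show $(\alpha,e)$ preserves $\tilde\pi_{eq}$-fibres for $\alpha\in\aut^{fib}$, i.e.\ that $c(\alpha):=(\alpha,e)_{eq}$ is trivial. Without this you only get that $\tilde\aut^{fib}$ is the twisted subgroup of pairs $(\alpha,k)$ with $kK$ corresponding to $c(\alpha)^{-1}$ under $G/K\cong\ker pr_{eq}$. This subgroup has the form $\aut^{fib}\times K'$ for some $K'$ exactly when $c\equiv 1$.

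Your remark about fixed points of $\tilde\theta$ does not address this. Nor can it be settled by properties of $\alpha$ on $X$ alone. If $z:\aut\to Z(G)$ is a homomorphism, then $\alpha\mapsto(\alpha,z(\alpha))$ gives another decomposition $\tilde\aut=\aut\times G$, and it composes $c(\alpha)$ with the rotation $(e,z(\alpha))_{eq}$. So unless every such $z$ maps $\aut^{fib}$ into $K$, whether $c\equiv1$ depends on which copy of $\aut$ you use, and an argument must use (or choose) that embedding.

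Your fallback fails as well. The twisted subgroup is an extension of $\aut^{fib}$ by $K$ that need not be a direct product. For example, with $G=\Z/4\Z$, $K=2\Z/4\Z$, $\aut^{fib}\cong\Z/2\Z$ and $c$ onto $G/K$, it is cyclic of order $4$.

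The paper does not supply the missing idea either. After the same projection argument, it simply asserts that $\tilde\aut^{fib}\subseteq\aut^{fib}\times G$ forces the form $\aut^{fib}\times K$, with $K$ normal because $\tilde\aut^{fib}$ is. It gives no argument for $|G/K|=h$ at all.

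\textbf{A smaller, repairable gap in $|G/K|=h$.} Your argument is circular. Write $H'\subseteq\ker pr_{eq}$ for the image of $G$. Then $\tilde\pi_{eq}(\{x\}\times G)$ is the $H'$-orbit of $\tilde\pi_{eq}(x,e)$. So the claim that every $\tilde\pi_{eq}$-fibre over $\xi$ meets $\{x\}\times G$ is equivalent to $H'=\ker pr_{eq}$, which is what you want to prove.

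You can repair this with maximality of $\Xe$. The composite $\tilde X\to\tilde X_{eq}\to\tilde X_{eq}/H'$ is constant on the $pr$-fibres $\{x\}\times G$, so it factors through $pr$. This exhibits $\tilde X_{eq}/H'$ as an equicontinuous factor of $X$ that maps onto $\Xe$ compatibly with $\pe$. Maximality then forces $\tilde X_{eq}/H'\to\Xe$ to be injective, so $H'=\ker pr_{eq}$ and $|G/K|=h$.
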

\begin{proof}
By minimality, $\Ef(\tilde X)$ acts transitively on the MEF fibres. As $\tilde\aut^{fib}$ commutes with $\Ef(\tilde X)$ its restriction to $\tilde e\tpe^{-1}(\tilde \xi)$ must belong to the centraliser of $\tilde e\Ef_{\tilde \xi}(\tilde X)\tilde e$ in the group of permutations of this fibre. 
We claim that it is also a subgroup of $\aut^{fib}\times G$. This then implies that there must be  a subgroup $K$ of $G$ such that $\tilde\aut^{fib} = \aut^{fib}\times K$. As $\tilde\aut^{fib}$ is normal, $K$ is normal.  

It remains to prove the claim that $\tilde\aut^{fib}$ is a subgroup of $\aut^{fib}\times G$.  By assumption, $\tilde\aut=  \aut\times G$.
Consider an element $(\alpha,g)$ of 
$ \tilde\aut^{fib}$. An MEF-fibre of $\tilde X$ projects to an 
MEF-fibre of $X$. Hence $\alpha_{eq}=0$, i.e., $\alpha\in\aut^{fib}$.
\end{proof}
To study how the coincidence  rank changes we consider proximal pairs of $X$. This is particularly simple if $X$ is almost automorphic so that proximal pairs are bi-proximal, and furthermore if restrict to the case where any bi-proximal pair is  bi-asymptotic, i.e. we restrict to almost distal systems.
In a bi-asymptotic pair, the two sequences disagree on two words of the same length, say $w^+$ and $w^-$. We can lift these pairs to $|G|$ pairs in $\tilde X$. Now a lifted pair is bi-asymptotic if and only if $q(w^+) = q(w^-)$. If this is the case for all bi-asymptotic pairs, then $q(w^+)q(w^-)^{-1}=e$ and $\tilde cr = cr$. Otherwise we look at the group generated by $\omega:=q(w^+)q(w^-)^{-1}$. If its order is $k$ then $\tilde X$ has a $2k$-cycle in the sense of Barge-Diamond \cite{Barge-Diamond:2007}. It follows that $cr(\tilde\theta) \geq k$. 
We provide some examples.
\begin{example}
The second power of the period doubling substitution $a\mapsto abaa$, $b\mapsto abab$ has column number $1$ and a single proximal pair. The pair differs on one letter,  $w^+=a$, $w^-=b$, so $\omega = q(b)^{-1} q(a)$. Now \eqref{eq-cond-q} is equivalent to $q(aba) = e$ thus $q(b) = q(a)^{-2}$ and $\omega=q(a)^3$. If we require $G$ to be generated by $q(a)$ and $q(b)$ then it must be a cyclic group, say of order $n$. It follows that the coincidence rank $\tilde c$ is the number of orbits of $(+3)$ in $\Z/n\Z$.
\begin{itemize}
 \item If $n=3m$ then created height is $h=3$ and $\tilde c = m$.
 \item For all other n we have $h=1$ and $\tilde c = n$. Specifically  if $n=2$ the resulting $\tilde \theta$ is the Thue-Morse substitution. 
\end{itemize}
\end{example}
\begin{example}
The third power of the Fibonacci substitution is $a\mapsto abaab$, $b\mapsto aba$. Conjugating with $ab$ this substitution defines the same dynamical system as $a\mapsto aabab$, $b\mapsto aab$; this will make it easier for us to constrain $q$.
For any $G$, \eqref{eq-cond-q} implies that $q(a)^2=q(ab)^2 = 1$ (there is no non-trivial solution for the first two powers). 
Requiring that $q(\Aa)$ generates $G$ we have the following options, where $\tau$ denotes a transposition. They  all yield a primitive substitution $\tilde\theta$
\begin{itemize}
\item[1i] $G=S_2$, $q(a) = \tau$, $q(b)=e$,
\item[1ii] $G=S_2$, $q(b) = \tau$, $q(a)=e$,
\item[1iii] $G=S_2$, $q(a)=q(b) = \tau$,
\item[2] $G=S_2\times S_2$,  $q(a) = 1\times \tau$, $q(b)=\tau\times 1$,
\item[3] $G=D_6\cong S_3$, $q(a) = \tau_{12}$, $q(ab)=\tau_{23}$, i.e., $q(b)=\tau_{123}$.
\end{itemize}
The original substitution $\theta$ has a single proximal pair. This pair  is bi-asymptotic. Since here $w^+=ab$ and $w^-=ba$ we conclude immediately that $\omega=e$ if $G$ is abelian. In that case $h=|G|$.  In case 3, note that $\aut = \mathbb Z$ and it can be verified that $\tilde\aut = \aut\times G$. Since $\omega$ generates $A_3\subset S_3$, this reduces the height to  $h=2$.

The Fibonacci substitution shift  is an  example of a $(\al,\cut)$-Sturmian systems, notably with $\al=\cut= {\frac{1-\sqrt{5}}2}$. Hence its Ellis semigroup is 
$\Z\sqcup LZ_2\times S^1$ with $\Z$-action on $S^1$ given by rotation by $\al$.
Cases 1i,1ii,1iii can be compared using Theorem~\ref{thm-main}: they are isomorphic if and only if their Ellis semigroups are isomorphic. It is easy to see that the Ellis semigroups for these three cases are all algebraically isomorphic to $\Z\sqcup LZ_2\times \tilde X_e$ and $ \tilde X_e$ is a $2$-fold covering of $S^1$. Thus $ \tilde X_e
\cong S^1$ but the $\Z$-action on it is given by rotation by $\frac{\al}2$.
\end{example}

\section{The Ellis semigroup of quasi-bijective substitutions}\label{ESQB}
In this section we generalise the work of \cite{ESBS} on the computation of the algebraic structure of the Ellis semigroup to the larger class of dynamical systems obtained from a quasi-bijective substitution. In particular, we compute the Ellis semigroup of the Rudin-Shapiro shift and find that it is algebraically isomorphic to the Ellis semigroup of the Thue-Morse shift. As the computations may well be of independent interest, we keep this section self-contained. 

\subsection{More semigroup preliminaries}\label{preliminaries}
%
In this section $S$ denotes a semigroup without a zero element which admits a minimal idempotent $e$. Then $\Hh_e := e S e$ is a group whose identity element is $e$. The group inverse of an element $s\in \Hh_e$ is the unique normal inverse of $s$ in $S$, 
we denote it by $s^{-1}$. A little care has to be taken with this notation. 
While $(s_1s_2)^{-1} = s_2^{-1} s_1^{-1}$ if $s_1$ and $s_2$ belong to the same group, this may no longer be true if $s_i \in \Hh_{e_i}$ where $e_1,e_2$ are two distinct minimal idempotents. If $S$ is a sub-semigroup of $\Ff(X)$, the semigroup of functions on a space $X$, and $s\in\Hh_e$, then the restriction of $s^{-1}$ to
  $\im e$  is the inverse of the restriction of $s$ to $\im e$ as a function.
Let $S$ be a semigroup 
and let $e_+$, $e_-$ be minimal idempotents of $S$; they may coincide. 
We denote $\Hh_{+}:=e_+S e_+$, respectively $\Hh_{-}=e_-Se_-$,  the group whose identity is $e_+$, respectively $e_-$; both groups are isomorphic. Of relevance for the analysis below is the sub-semigroup of $S$ which is generated $\Hh_{+}$ and $\Hh_{-}$. There are four possibilities: $\Hh_{+}=\Hh_{-}$, $\Hh_+$ and $\Hh_-$ are in the same left ideal, $\Hh_+$ and $\Hh_-$ are in the same right ideal, or none of these first three cases. Let $e_{+-}$ be the unique idempotent in the group $\Hh_{{+-}}=R_{e_+}\cap L_{e_-}$, the intersection of the right ideal of $e_+$ with the left ideal of $e_-$.  It is the unique idempotent such that $e_+e_-\in \Hh_{{+-}}$. We have the following commuting diagram, where $\rho_x$ denotes right and $\lambda_x$ left multiplication: 
\[
\begin{array}{rcl}
\Hh_+  & \stackrel{\rho_{e_{+-}}} \longrightarrow & \Hh_{+-}\\
\lambda_{e_{-+}} \downarrow & & \downarrow \lambda_{e_{-}} \\
\Hh_{-+}  & \stackrel{\rho_{e_{-}}} \longrightarrow & \Hh_{-}
\end{array}
\]
and relations
\begin{eqnarray*}
e_+ e_{+-} = &e_{+-}& = e_{+-}e_- , \\ e_{+-} e_{+} = &e_+& = e_{+}e_{-+} ,  \\ e_{-}e_{-+}  = &e_{-+} &= e_{-+}e_{+} , \\
e_{-} e_{+-} = &e_{-}& = e_{-+}e_{-};
\end{eqnarray*}
we will use these repeatedly.

Let $S^{(2)}\subset \Hh_{-+}\times \Hh_{+}$ and $\phi\in \Hh_{+-}$. Let $\Lambda := \{+,-\}$ be a set of two elements. Define
\begin{equation}\label{eq:r-set}I = \{LR^{-1}:(L,R) \in S^{(2)}\}\subset \Hh_{-+},\end{equation}
and  define the $\Lambda\times I$ matrix $A_\phi=(a_{\lambda \ii})_{\lambda \ii}$ with entries in $\Hh_+$ by 
\begin{equation}\label{eq:matrix} a_{+\,\ii} = e_+ \qquad a_{-\,\ii} = \phi \ii \end{equation}
These data define the matrix semigroup $M[I,\Hh_{+},\Lambda;A_\phi]$. 
Different choices for $\phi$ lead to isomorphic semigroups\footnote{If $\phi=e_+\eta_o^{-1} e_-$ for some $\eta_0\in I$ then the matrix semigroup is normalised.}.


\subsection{Substitutions}\label{sec:substitutions}

We first briefly summarise the notation and results concerning substitutions that we will need; for an extensive background see \cite{Baake-Grimm} or \cite{Pytheas-Fogg}.

A  {\em substitution} is a map from a finite set $\mathcal A$, the alphabet, to the set of 
nonempty finite words (finite sequences) on $\mathcal A$. We extend $\theta$ to a map on finite words by concatenation: 
\begin{equation}\label{eq-concat}\theta(a_1\cdots a_k) = \theta(a_1)\cdots \theta(a_k),
\end{equation}
and to bi-infinite sequences $\cdots u_{-2} u_{-1} u_0 u_1 \cdots$  as  
\[\theta   (\cdots u_{-2} u_{-1} u_0 u_1 \cdots ) :=   \cdots \theta (u_{-2}) \theta (  u_{-1}    ) \theta  (u_{-1})\cdot   \theta (     u_{0})  \theta (    u_{1}) \cdots \, .\]
Here the $\cdot$ indicates the position between the negative indices and the nonnegative indices. 

We say that $\theta$ is {\em primitive} if there is some 
$k\in \N$ such that for any $a,a'\in \mathcal A$,
the word $\theta^k(a)$ contains at least one  occurrence of $a'$. 
We say that a finite word is {\em allowed} for $\theta$ if it appears somewhere in $\theta^k(a)$ for some $a\in \Aa$ and some  $k\in\N$.

The {\em substitution shift} $( X,  \sigma)$ is the dynamical system where the space $X$  consists of all bi-infinite sequences 
all of whose subwords are allowed for $\theta$. If $\theta$ is primitive, then the same shift space $X$ is generated by any power $\theta^n$,  $n\in \N$.
We equip $X$ with the subspace topology of the product topology on $\Aa^\Z$, making the left shift map $\sigma$  a continuous  $\Z$-action. Primitivity  of $\theta$ implies that $(X,\sigma)$ is minimal.

We say that a primitive substitution is {\em aperiodic} if $X$ does not contain any $\sigma$-periodic sequences. This is the case if and only if $X$ is an infinite space.

\subsection{Constant length and quasi-bijective substitutions }\label{def:quasi-bijective}

The substitution $\theta$ has
{\em (constant) length~$\ell$}  if for each $a\in \mathcal A$, 
$\theta (a)$ is a word of length $\ell$. In this case one can describe the substitution with  $\ell$ maps $\theta_i:\mathcal A \rightarrow \mathcal A$, $0\leq i \leq \ell-1$, such that
\begin{equation}\label{eq-as-perm}\nonumber
\theta(a) = \theta_0(a)\cdots \theta_{\ell-1}(a)
\end{equation}
for all $a\in\Aa$.

 Let $\Z_\ell$ denote the  $\ell$-adic integers, i.e., the inverse limit of cyclic groups $ \varprojlim \Z/\ell^n\Z $.  Let  $\Z_{\bar{\ell},h} :=  \varprojlim \Z/\ell^nh\Z $  and let $1 := (\cdots, 0,0,1)$; addition in $\Z_{\bar{\ell},h}$ is performed with carry.   
If $\theta$ is primitive and aperiodic, then Dekking's theorem \cite{dekking} tells us that   $(\Z_\ell, +1)$ is an equicontinuous factor of $(X, \sigma)$. Furthermore, 
there is an $h$, with $0<h<\ell$, with $h$ coprime to $\ell$, such that 
 $(\mathbb{Z}_{\bar{\ell},h}, +1)$, is  the maximal equicontinuous factor of  $(X, \sigma)$. The integer $h$ is called the {\em height} of $\theta$, and   we say that  $\theta$ has {\em trivial height} if $h=1$.

   We fix the factor map $\pe:X\rightarrow \Z_\ell$ from a primitive aperiodic length-$\ell$ substitution shift $(X,\sigma)$ to $(\Z_\ell,+1)$ with which we work in this article. We will specify it by requiring $\pe(u)=0$ if and only if $u$ is a $\theta$-fixed point. We refer the reader to \cite{dekking} for details.

The {\em column rank} of a length-$\ell$ substitution $\theta$  is defined as the minimal number of distinct letters in the image of a column map of $\theta^{n}$, for some $n$. In other words,
\begin{equation} \label{eq:dolumnnumber}c=c (\theta) := \inf_{j,n} \left\{\lvert {\theta^n}_j(\mathcal A)\rvert  : 0\le j <  \ell^n\right\}.\end{equation} 
For details, see \cite{dekking}. We remark that our notion of column rank agrees with Dekking's original definition of column number when $h=1$.  

A substitution $\theta$ is {\em quasi-bijective} if it has constant length and if for some power,  each of the maps $\theta_i$ has rank $c$, where $c$ is the column rank. When $c=|\mathcal A|$, a quasi-bijection substitution is known as a {\em bijective substitution}. As we are only interested in infinite shifts, we only consider  the case $c\geq 2$.
 
 We say that {the quasi-bijective} $\theta$ is {\em simplified} if
 \begin{enumerate}
 \item{each map $\theta_i$ has rank $c$, and}
 \item each $\theta$-periodic point is a fixed point of $\theta$, so that in particular $e_+:=\theta_0 $ and $e_-:= \theta_{\ell-1}$ are idempotents.
  \end{enumerate}
Given any quasi-bijective substitution $\theta$, property  (2) will be satisfied by a large enough power $\theta^n$ of $\theta$. Indeed,  if $L$ is the lowest common multiple of the least periods of the periodic points, then each periodic point is a fixed point under $\theta^L$. The fixed points are of the form $\theta^{\infty}(a)\cdot\theta^{\infty}(b)$, where $ab$ belongs to 
\[ \mathcal A^{(2)} = \{ ab: a\in \im e_- ,\,   b\in \im e_+, \,  \mbox{ and } ab \mbox{ is an allowed word for $\theta$}\}.\]
A fixed point is uniquely determined by such an $ab$;  
we will use the notation $a\cdot b$ to denote it.  

The following lemma can be verified as in the bijective case \cite{ESBS}.

\begin{lemma}\label{lem:unique-singular-fibre}
Let $\theta$ be a primitive, aperiodic, simplified  quasi-bijective substitution of length $\ell$ with maximal equicontinuous factor map
$\pe:X_\theta\to\Z_{\bar{\ell},h}$, where $\pe^{-1}(0)$ consists of the fixed points of $\theta$. Then the orbit of $0\in \Z_{\bar{\ell},h}$ contains all singular points, 
so that  $(X_\theta,\sigma)$ is USF. 
If two points $x,x'\in X_\theta$ are forward (or backward) proximal then they are forward (or backward) asymptotic. 
 \end{lemma}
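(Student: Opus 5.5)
We prove both assertions by combinatorics on the $\theta$-skeleton of a point, working with the simplified substitution $\theta$. We use that $\pe$ maps $u$ to the $\ell$-adic address of its origin inside the nested supertiles $\theta^n(\cdot)$. Consequently two points lie in the same fibre exactly when their supertile decompositions at all levels $n$ have the same cut positions.

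\textbf{Step 1: only the orbit of $0$ can be singular.} Let $\xi\in\Z_{\bar\ell,h}$ lie outside the $\Z$-orbit of $0$. Then for every $N$ there is $n$ such that the origin of any $u\in\pe^{-1}(\xi)$ sits at distance more than $N$ from both boundaries of its level-$n$ supertile $\theta^n(a_n(u))$. Let $u,v\in\pe^{-1}(\xi)$ be proximal. Then some common shift $\sigma^k u,\sigma^k v$ agree on a large window. Since the cut points coincide, this agreement forces, via recognisability and the fact that each $\theta_i$ has rank exactly $c$, that the level-$n$ letters of $u$ and $v$ are equal whenever their images under the relevant column maps agree.

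The key quasi-bijective input is the following. Each $\theta_i$ has rank $c$, and the column rank of every power is also $c$. Hence a composite column map of $\theta^n$ is injective on the image $\theta_j(\Aa)$ of any column map. From this we obtain that two letters whose $\theta^n$-images agree in one column already agree in every column, after one application of $\theta$. Propagating through all levels, $u$ and $v$ agree on every supertile containing the origin. By the choice of $\xi$ these supertiles exhaust $\Z$, so $u=v$. Thus $\xi$ is regular, every singular point lies in $\Z\cdot 0$, and the set of singular points is a single $T$-orbit. Since $\aute$ contains the $T$-translations, $(X_\theta,\sigma)$ is USF.

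\textbf{Step 2: forward proximal implies forward asymptotic.} Let $x,x'$ be forward proximal. Then they lie in the same fibre, and for large $n$ there are times $k\to\infty$ at which $x$ and $x'$ agree on long windows. Take a level-$n$ supertile of $x$ lying to the right of a window of agreement. By the injectivity property of Step 1, agreement on a full column pattern at level $n$ propagates: the parent letters of $x$ and $x'$ agree after applying $\theta_j$, so they agree on every subsequent supertile. The remaining agreement is on letters lying in the common image $\theta_j(\Aa)$, where all column maps act bijectively. Here we argue by contradiction, using recognisability of aperiodic primitive substitutions: any disagreement to the right of a region of agreement would propagate upward and contradict proximality at arbitrarily large scales. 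The backward case is symmetric.

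\textbf{Main obstacle.} The delicate point is the non-bijective letters, which lie outside the images of the column maps and occur only at level $0$. We handle this by passing to $\theta(X)$, where every letter lies in some $\theta_j(\Aa)$ and the column maps restricted to these images are bijections between sets of size $c$. On these images the argument is the bijective one from \cite{ESBS}, and we will check that it carries over verbatim.
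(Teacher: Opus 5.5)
Your strategy is the standard one the paper defers to (the bijective-case argument), and your key quasi-bijective input is correct. Since $(\theta^n)_i\theta_j$ is a column map of $\theta^{n+1}$, it has rank at least $c=|\theta_j(\Aa)|$, so every column map of every power is injective on every column image. For $h=1$ your Step 1 is then fine. As written, however, the proposal has a genuine gap when $h>1$, and Step 2 is not yet a proof.

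\textbf{Step 1 fails for $h>1$.} Let $r:\Z_{\bar\ell,h}\cong\Z_\ell\times\Z/h\Z\to\Z_\ell$ be the reduction and $\pi=r\circ\pe$ the $\theta$-odometer map. The $\theta^n$-supertiles only see $r(\xi)$. Moreover, $r^{-1}(\Z)$ is a union of $h$ orbits, of which only one is the orbit of $0$. Take $\xi$ on one of the other $h-1$ orbits, e.g.\ a non-zero element of $r^{-1}(0)$. Then $\xi$ is outside the orbit of $0$, yet every point of $\pe^{-1}(\xi)\subset\pi^{-1}(0)$ is a $\theta$-fixed point with all its level-$n$ cut points at the origin. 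So your claim that the origin eventually lies far from both boundaries is false there. Your argument only proves that singular points lie in $r^{-1}(\Z)$. Excluding proximal pairs among the fixed points lying over the extra $h-1$ orbits needs an additional argument that uses the height, for instance via the pure base, whose supertiles have length $h\ell^n$. Alternatively, restrict explicitly to $h=1$. For $h>1$ the $\theta$-fixed points are spread over the $h$ MEF fibres above $r^{-1}(0)$, so the normalisation ``$\pe^{-1}(0)$ consists of the fixed points'' really describes the trivial-height case, which covers Thue--Morse and Rudin--Shapiro.

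\textbf{Step 2 and the ``main obstacle'' paragraph.} Part of the stated mechanism is wrong: agreement does not propagate to supertiles further to the right. For simplified Thue--Morse, the fixed points $a\cdot a$ and $a\cdot b$ agree on $(-\infty,0)$ and differ at every position of $[0,\infty)$.

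What propagates is agreement \emph{upward}. Suppose $x,x'$ lie in a common $\pi$-fibre and $x_p=x'_p$. Because the cut points coincide, at every level the two letters above $p$ lie in the same column image, so injectivity makes them equal. Hence $x$ and $x'$ agree on the union $U_p$ of all supertiles containing $p$. If $\pi(x)\notin\Z$, then $U_p=\Z$. If, after a shift, $x$ and $x'$ are fixed points, then $U_p=[0,\infty)$ for every $p\ge 0$. Forward proximality gives some $p\ge0$ with $x_p=x'_p$, so $x_{[0,\infty)}=x'_{[0,\infty)}$. That is the whole argument; no recognisability-based contradiction is needed.

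Finally, the ``non-bijective letters at level $0$'' do not exist. Every letter of a point of $X_\theta$ lies in $\theta_j(\Aa)$, where $j$ is its position in its level-$1$ tile. All that matters is that $j$ is the same for both points of a fibre, so passing to $\theta(X)$ adds nothing.
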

The last property of the Lemma~\ref{lem:unique-singular-fibre} means that the dynamical systems associated to quasi-bijective substitution shifts are almost distal.
It follows that the Ellis semigroup $E(X_\theta)$ is the disjoint union of its kernel $\M(X_\theta)$ with $\Z$ \cite{Barge-Kellendonk}.

\subsection{Quasi-bijective substitutions and their Ellis semigroup}\label{bijective}

For USF systems with no extra spectrum, the kernel $\M(X_\theta)$ is determined by the restriction of $\Mm^{fib}(X_\theta):=\Mm(X_\theta)\cap \Ef(X_\theta)$ to the fibre $\pe^{-1}(0)$. We denote this restriction by $\Mm^{fib}_0(X_\theta)$. Indeed, this restriction is a matrix semigroup $M(I,\Gg_0^{fib},\{\pm\};A)$ which we compute below.  $\M(X_\theta)$  is then  isomorphic to the matrix semigroup $M(I,\Gg,\{\pm\};\tilde A)$ where $\Gg$ fits into the exact sequence
\begin{equation*}
\Gg^{fib} \hookrightarrow  \Gg \stackrel{\widetilde\pe} \twoheadrightarrow 
\Z_{\bar{\ell},h}
\end{equation*}
involving the structure group $\Gg^{fib}$ of $\Mm^{fib}(X_\theta)$ and 
$\tilde A_{\lambda,i} \in\Gg^{fib}$ the covariant function on $\Z_{\bar{\ell},h}$ which is equal to $\tilde A_{\lambda,i} \in\Gg^{fib}$
$\Xe=\Z_{\bar{\ell},h}$ is the maximal equicontinuous factor of $X$ \cite{ESBS}. 
 We showed in Theorem~\ref{thm-prod} that if $(X,\sigma)$ has no extra spectrum, then  
$$\Gg^{fib} \cong \{\tilde g:Z\to \Gg_0^{fib}:\tilde g(\eta+1) = \sigma \tilde g(\eta) \sigma^{-1}\},$$
where the right hand side is a group under pointwise multiplication and the isomorphism is given by $g\mapsto \tilde g$ where $\tilde g(\eta)(x) = g(x)$ for all $x\in \pe^{-1}(\eta)$. 

%

\subsection{Computation of $\M^{fib}_0(X_\theta)$}\label{structural-section} 
Recall from Proposition~\ref{prop-alg-kernel} that the algebraic structure of the kernel $\M(X_\theta)$ of the Ellis semigroup is uniquely determined by  $\M_0^{fib}(X_\theta)$ and 
$\Z_{\bar{\ell},h}$ provided that the topological normal closure of $\Gamma_0$,  the restriction of the little structure group $\Gamma$ to $e\pe^{-1}(0)$, contains all elements of the permutation group. As quasi-bijective shifts are almost distal, this then determines the algebraic structure of $E(X_\theta)$. We now compute $\M^{fib}_0(X_\theta)$.

Recall that since $\theta$ has length $\ell$ there are maps  $\theta_i:\mathcal A \rightarrow \mathcal A$ such that
$
\theta(a) = \theta_0(a)\cdots \theta_{\ell-1}(a)
$
for all $a\in\Aa$.
$\theta$ is thus uniquely determined by what we call its {\em expansion}, namely its representation as a concatenation of $\ell$ maps, which we write as
$$\theta = \theta_0|\theta_1|\cdots |\theta_{\ell-1}.$$
We use this bar notation to separate the columns, or  to distinguish the concatenation $ \theta_0|\theta_1$  from the composition $ \theta_0\theta_1$.
It follows from (\ref{eq-concat}) that 
the composition of two substitutions $\theta$, $\theta'$ of length $\ell$ and $\ell'$ over the same alphabet, which we simply denote by $\theta \theta'$, then has  an expansion into
$\ell \ell'$ maps
\begin{equation}\nonumber
\theta \theta' = \theta_0\theta'_0 |\cdots |\theta_{\ell-1}\theta'_0|\theta_0\theta'_1| \cdots |
\theta_{\ell-1}\theta'_{\ell'-1}; 
\end{equation}
in particular, the expansion of $\theta^2$ is given by 
\begin{equation}\label{expansion}
(\theta^2)_0|\cdots |(\theta^2)_{\ell^2-1} = \theta_0\theta_0 | \cdots |\theta_{\ell - 1}\theta_{0}|  \theta_0\theta_1 |  \cdots |\theta_{\ell-1}\theta_{\ell-1} 
\end{equation}
and iteratively we find, for any given $n$ the $\ell^n$ maps $(\theta^n)_i$ corresponding to the expansion of $\theta^n$.

The semigroup $S_\theta$ associated to a constant length substitution $\theta$ is defined to be the semigroup of maps on the alphabet which is generated by the column maps $(\theta^n)_i$, $n\in\N$, $i=0,\cdots,\ell^n-1$ of $\theta^n$ for all $n\geq 1$. We call two elements $L,R\in S_\theta$ \emph{consecutive} if $L=(\theta^n)_{i-1}$ and $R=(\theta^n)_i$ for some $n>0$ and $i>0$.


Given the semigroup $S_\theta$ of a quasi-bijective substitution in simplified form, let $e_+=\theta_0$, $e_-=\theta_{\ell-1}$, and let $S^{(2)}_\theta$ be the set of consecutive elements in $\Hh_{-+}\times \Hh_{+}$. 
We call $G_\theta:= \Hh_{+}$ the structure group of the substitution. Let $I_\theta$ be defined as in~\eqref{eq:r-set} with $S^{(2)}=S^{(2)}_\theta$ and $A_\phi$ be defined as in~\eqref{eq:matrix}.
\begin{theorem}\label{thm-RMG}
Let $\theta$ be a primitive aperiodic quasi-bijective substitution in simplified form. The semigroup $\Sfib$ is isomorphic to $M(I_\theta,G_\theta,\{\pm\};A_\phi)$. An isomorphism $\Phi:M(I_\theta,G_\theta,\{\pm\};A_\phi)\to \Sfib$ is given by $\Phi(\ii ,g,\epsilon):= f_{(\ii ,g,\epsilon)}$  where 
\begin{eqnarray*} f_{(\ii ,g,+)}( a\cdot b) &=&\ii g (b)\cdot g(b)\\
f_{(\ii ,g,-)}(a\cdot b) &=& \ii g\phi(a)\cdot  g\phi (a).
\end{eqnarray*} 
\end{theorem}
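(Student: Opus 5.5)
Following the bijective case of \cite{ESBS}, I will first describe the elements of $\Sfib$ concretely by their action on the fixed points $a\cdot b$, and then check that the proposed map $\Phi$ is a bijective semigroup morphism.

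\emph{Description of the elements.} Any $f\in\M^{fib}(X_\theta)$ is a limit of shifts $\sigma^{n_\nu}$ with $n_\nu\to 0$ $\ell$-adically, since $f$ preserves the fibre. Passing to a subnet, I may assume $n_\nu$ is eventually of the form $m\ell^k$ with $m\equiv\pm$ something. The left-infinite and right-infinite halves of $\sigma^{n_\nu}(a\cdot b)$ are then read off from consecutive column maps $(\theta^k)_{i-1},(\theta^k)_i$ applied to the letters $a$ or $b$. As $f$ lies in the kernel, the net can be chosen so that the relevant column maps converge in the finite semigroup $S_\theta$ to elements of minimal rank $c$, i.e.\ to group elements of the kernel of $S_\theta$. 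Two types of limit arise:
\begin{itemize}
\item those that see only the right half $b$ (approach from the right, governed by $e_+=\theta_0$);
\item those that see only the left half $a$ (approach from the left, governed by $e_-=\theta_{\ell-1}$).
\end{itemize}
In the first case $f(a\cdot b)=L(b)\cdot R(b)$ with $(L,R)\in S^{(2)}_\theta$. Writing $g=R\in\Hh_+$ and $\ii=LR^{-1}\in I_\theta$, this gives $L(b)=\ii g(b)$, because $R^{-1}R=e_+$ fixes $b\in\im e_+$. This is exactly $f_{(\ii,g,+)}$. The second case similarly gives $f(a\cdot b)=L'(a)\cdot R'(a)$ with $L'\in\Hh_-$-type elements. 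Right multiplying by $\phi\in\Hh_{+-}$ converts these to the normalised form $\ii g\phi(a)\cdot g\phi(a)$, using the relations $e_-e_{+-}=e_-$ and $e_{+-}e_+=e_+$ listed in \S\ref{preliminaries}. Together this shows that $\Phi$ is surjective. For the converse inclusion (every triple is realised), I will use primitivity and the fact that every consecutive pair and every group element occurs in some column of $\theta^n$, so that the corresponding shifts $\sigma^{\pm i\ell^n}$ accumulate on $f_{(\ii,g,\pm)}$.

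\emph{Morphism and injectivity.} I will compute $f_{(\ii,g,\lambda)}f_{(\jj,h,\mu)}$ on $a\cdot b$. For instance, with $\mu=+$ the inner map gives $\jj h(b)\cdot h(b)$, and applying $f_{(\ii,g,\lambda)}$ yields:
\begin{itemize}
\item for $\lambda=+$: $\ii gh(b)\cdot gh(b)$, matching $(\ii,g a_{+\jj}h,+)$ since $a_{+\jj}=e_+$;
\item for $\lambda=-$: $\ii g\phi\jj h(b)\cdot g\phi\jj h(b)$, matching $a_{-\jj}=\phi\jj$.
\end{itemize}
The case $\mu=-$ is analogous. For injectivity, note that $g$ is recovered from the right halves, since $g$ is injective on $\im e_+$ and $g\phi$ is injective on $\im e_-$. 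Then $\ii$ is recovered on $\im(g)$, which equals $\im e_+$ or $\im e_{-+}$ as appropriate. Finally, the sign is detected because the $+$ maps depend only on $b$ and the $-$ maps only on $a$, which can be distinguished by aperiodicity, since there exist allowed $ab,ab'$ with $b\neq b'$.

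\emph{Main obstacle.} I expect the main obstacle to be the first step: showing rigorously that every element of the kernel restricted to the fibre has one of the two one-sided forms, and that the left and right halves come from a genuinely \emph{consecutive} pair. In the quasi-bijective case, as opposed to the bijective one, the column maps are not permutations, so I must argue within the minimal ideal of the finite semigroup $S_\theta$ that these limits land in $\Hh_{-+}\times\Hh_+$. I would try doing this by choosing $n$ along a subsequence with $(\theta^n)_i$ idempotent-adjusted, using the simplified form.
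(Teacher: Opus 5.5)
\textbf{There is a genuine gap, at the step you yourself flag as the main obstacle.} Your overall plan is the paper's: realise each $f_{(\ii,g,\pm)}$ as a limit of shifts, check multiplicativity on the fibre, check injectivity, and show every element of $\Sfib$ has one of the two forms. Your multiplicativity and injectivity checks are fine, apart from one slip: $\im(g\phi)=\im e_+$, not $\im e_{-+}$, since $g\phi\in\Hh_{+-}$. This is harmless because $\ii=\ii e_+$.

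\textbf{Surjectivity is not proved.} You never show that an arbitrary element of $\Sfib$ acts as $a\cdot b\mapsto L(b)\cdot R(b)$ or $a\cdot b\mapsto L(a)\cdot R(a)$ for a single consecutive pair $(L,R)$. The route you sketch is unjustified and is not what makes it work: a subnet with $n_\nu=m\ell^k$, column maps ``converging to minimal rank'', and reading whole half-infinite sequences off $\sigma^{n_\nu}(a\cdot b)$.

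The missing idea is as follows. The image of a fixed point lies again in $\pe^{-1}(0)$, so it is a fixed point, and a fixed point is determined by its letters at positions $-1,0$. The element is a limit of $\sigma^{\nu_i}$ with, after passing to a subnet, all $\nu_i>0$ or all $\nu_i<0$. Since $\pe^{-1}(0)$ is finite, one single integer $\nu$ then reproduces these two-letter windows at all fixed points simultaneously.
\begin{itemize}
\item For $\nu>0$ and $\ell^n>\nu$, the window of $\sigma^\nu(a\cdot b)$ is $(\theta^n)_{\nu-1}(b)\,(\theta^n)_\nu(b)$.
\item For $\nu<0$, it is read off from the block $\theta^n(a)$ at positions $\nu+\ell^n-1$ and $\nu+\ell^n$.
\end{itemize}
No limiting procedure inside $S_\theta$ is needed.

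\textbf{Your rank worry disappears because $\theta$ is simplified.} Since $(\theta^n\theta^m)_{i+\ell^n j}=(\theta^n)_i(\theta^m)_j$, every element of $S_\theta$ is a column map of some power, so it has rank exactly $c$. Hence $S_\theta$ is completely simple.
\begin{itemize}
\item Positive case: $(e_-Le_+,e_+Re_+)$ are consecutive columns of $\theta\,\theta^n\theta$ and lie in $\Hh_{-+}\times\Hh_+$. They agree with $(L,R)$ on $\im e_+$, because $L(b)\in\im e_-$ and $R(b)\in\im e_+$ (the image is a fixed point). Put $g=e_+Re_+$ and $\ii=(e_-Le_+)g^{-1}$ to get the $+$ form.
\item Negative case: $(e_-Le_-,e_+Re_-)$ is consecutive in $\Hh_-\times\Hh_{+-}$, and Lemma~\ref{lem-neg} turns it into $(\ii g\phi,g\phi)$.
\end{itemize}

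\textbf{A smaller gap of the same kind sits in your realisation step.} Knowing that every consecutive pair and every group element occurs in some column does not show that the particular pair $(\ii g,g)$ is consecutive. You need that consecutive pairs are stable under diagonal right multiplication by $S_\theta$, because $(L\theta_j,R\theta_j)$ are consecutive columns of $\theta^n\theta$. Apply this to $(L',R')$ with $\ii=L'{R'}^{-1}$ and multiplier ${R'}^{-1}g$. Only then, with $\ii g=(\theta^n)_{i-1}$ and $g=(\theta^n)_i$, do the shifts $\sigma^{i\ell^{k+1}}$, $k\to\infty$, converge on $\pe^{-1}(0)$ to $f_{(\ii,g,+)}$.
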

We will prove Theorem~\ref{thm-RMG} in several steps.
Combined with Proposition~\ref{prop-alg-kernel} this theorem describes the 
algebraic structure of the Ellis semigroup $E(X_\theta)$ provided that the topological normal closure of $\Gamma_0$ contains all elements of the permutation group. 
The condition in Theorem~\ref{thm-RMG} then implies that $G_\theta$ is the permutation group of $\pe^{-1}(0)$.
\begin{cor}\label{cor-main-qbij}
Let $\theta$ be a primitive aperiodic quasi-bijective substitution in simplified form. Suppose that the normal closure of the little structure group $\Gamma_0$ contains all elements of the permutation group. 
Then $E(X_\theta) = \M(X_\theta) \sqcup \Z$ and $\M(X_\theta)$ is algebraically isomorphic to the matrix semigroup 
$M(I_\theta,\Gg,\{\pm\};\tilde A_\phi)$
where $\Gg$ can be identified with all covariant functions $\tilde g$ on the maximal equicontinuous factor $\Z_{\bar{\ell},h}$ such that $ \tilde g(\eta) \in B(\eta,\eta+\xi)$, $\xi = \tpe(g)$. Moreover, $I_\theta$ is as in~\eqref{eq:r-set} with $S^{(2)}=S^{(2)}_\theta$ and $\tilde A_\phi$ is the function on $\Z_{\bar{\ell},h}$ which takes the value $A_\phi(\eta) = A_\phi$ (given by ~\eqref{eq:matrix})
if $\eta$ lies in $\aute(0)$ whereas the entries of $A_\phi(\eta)$ are equal to $e$ if  $\eta$ is regular. The product of an element of $\M(X_\theta)$ with an element $n\in \Z$ corresponds to left multiplication by $\sigma$ on $\Gg$.
\end{cor}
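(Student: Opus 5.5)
The plan is to assemble the corollary from three earlier results: Theorem~\ref{thm-RMG}, Proposition~\ref{prop-alg-kernel} and Lemma~\ref{lem-alg-iso}, with Lemma~\ref{lem:unique-singular-fibre} supplying both the USF property and the decomposition $E=\M\sqcup\Z$.

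\emph{Step 1: the shape of $E(X_\theta)$.} By Lemma~\ref{lem:unique-singular-fibre}, $(X_\theta,\sigma)$ is USF. It is also almost distal, since forward or backward proximal pairs are asymptotic. The result of \cite{Barge-Kellendonk} then gives $E(X_\theta)=\M(X_\theta)\sqcup\Z$. An integer $n$ acts on the kernel by composition with $\sigma^n$, and since $\sigma^n$ is central this is left multiplication by $\sigma^n$.

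\emph{Step 2: identify $\Gg^{fib}_0$ and rule out extra spectrum.}
1. The hypothesis says that $\overline{\text{ncl}_{\Gg^{fib}_0}(\Gamma_0)}$ contains the full permutation group of $e\pe^{-1}(0)$. This set has finite cardinality $cr$, so the closure is harmless.
2. Since $\Gamma_0\subset\Gg^{fib}_0\subset S_{e\pe^{-1}(0)}$, the hypothesis forces $\Gg^{fib}_0=S_{e\pe^{-1}(0)}$ and $\overline{\text{ncl}_{\Gg^{fib}_0}(\Gamma_0)}=\Gg^{fib}_0$.
3. Theorem~\ref{thm-prod} then shows that the system has no extra spectrum, and also that $\mu$ is surjective onto the covariant functions.
4. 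Lemma~\ref{lem-alg-iso} now applies. It identifies $\Gg$ algebraically with the covariant functions $\tilde g$ on $\Xe=\Z_{\bar\ell,h}$ satisfying $\tilde g(\eta)\in B(\eta,\eta+\xi)$, with the stated twisted product.

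\emph{Step 3: the matrix data.} Theorem~\ref{thm-RMG} gives $\Sfib\cong M(I_\theta,G_\theta,\{\pm\};A_\phi)$. I then need to match this with the hypothesis of Proposition~\ref{prop-alg-kernel}, which is phrased with structure group $S_{\pe^{-1}(0)}$.

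The structure group of $\Sfib=\M^{fib}_0$ is the restriction $\Gg^{fib}_0$. By Step 2 this is the full permutation group of $e\pe^{-1}(0)$. Note that $e\pe^{-1}(0)$ has $cr$ points, which need not be all of $\pe^{-1}(0)$. I read the proposition's $S_{\pe^{-1}(0)}$ as this group, as its own proof via Lemma~\ref{lem-alg-iso} indicates.

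So $G_\theta\cong\Gg^{fib}_0$, and the sandwich matrix $A_\phi$ transported through this isomorphism is the $A$ of Proposition~\ref{prop-alg-kernel}. That proposition then gives $\M(X_\theta)\cong M(I_\theta,\Gg,\{\pm\};\tilde A_\phi)$. Here $\tilde A_\phi(\eta)=A_\phi$ on $\aute(0)$, and the entries of $\tilde A_\phi(\eta)$ are $e$ off $\aute(0)$. By the USF property these are exactly the regular points.

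\emph{Main obstacle.} The delicate point is checking that the isomorphism of Theorem~\ref{thm-RMG} is compatible with the restriction used in Proposition~\ref{prop-alg-kernel}. Concretely, its index sets $I_\theta,\{\pm\}$ must label the same right and left ideals as those of $\M(X_\theta)$, and $A_\phi$ must be the sandwich matrix computed from the minimal idempotents acting on $\pe^{-1}(0)$. I would argue this as in the paragraph preceding Proposition~\ref{prop-alg-kernel}. The idempotents of $\M(X_\theta)$ all lie in $\M^{fib}$, they act trivially off $X_0$, and by covariance they are determined by their restriction to $\pe^{-1}(0)$. Restriction is therefore a bijection on idempotents that preserves products, and the Rees coordinates transfer.
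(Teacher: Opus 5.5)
Your proposal is correct and takes essentially the paper's route. The paper gives no separate proof: it combines Theorem~\ref{thm-RMG} with Proposition~\ref{prop-alg-kernel} and Lemma~\ref{lem-alg-iso}, uses Lemma~\ref{lem:unique-singular-fibre} for USF and almost distality, and uses the Barge--Kellendonk result for $E(X_\theta)=\M(X_\theta)\sqcup\Z$. Your derivation of no extra spectrum via Theorem~\ref{thm-prod}, and your reading of $S_{\pe^{-1}(0)}$ as the permutation group of $e\pe^{-1}(0)$ (a point the paper states loosely), supply details the paper leaves implicit.
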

If $\Gamma_0$ is smaller then similar results as for bijective substitutions can be obtained by generalising the approach of \cite{ESBS} using splitting sections. We will not discuss this here. 

 The first step in proving Theorem~\ref{thm-RMG}  is to show that there are  elements of $\M^{fib}_0(X)$that  act as stated. The next step is to show that $\Phi$ is multiplicative. 
Direct verification shows that $\Phi$ is injective. In the last step we show that $\Phi$ is surjective. We start with the following  lemma.
\begin{lemma} \label{lem-neg}
The set $S^{(2)}_\theta$ is invariant under diagonal right multiplication with elements from $\Hh_{-+}\cup \Hh_{+}$. Furthermore, if 
$\phi \in\Hh_{+-}$ then  
the map $(L,R) \mapsto (L\phi,R\phi)$ is a bijection between $S^{(2)}_\theta$ and the set $S^{(2,-)}_\theta$ of consecutive elements $(L,R)$ which lie in $\Hh_{-}\times\Hh_{+-}$. 
\end{lemma}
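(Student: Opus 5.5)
The plan is to work directly with the expansion formula \eqref{expansion} and the idempotent relations listed in Section~\ref{preliminaries}. Everything reduces to one structural fact: if $(L,R)=((\theta^n)_{i-1},(\theta^n)_i)$ is consecutive, then for any column map $M=(\theta^m)_j$ the pair $(LM,RM)$ is again consecutive. Indeed, by \eqref{expansion} applied to $\theta^n\theta^m$, the column maps of $\theta^{n+m}$ at positions $j\ell^n+i-1$ and $j\ell^n+i$ are $(\theta^n)_{i-1}(\theta^m)_j$ and $(\theta^n)_i(\theta^m)_j$. Consecutive pairs are therefore closed under diagonal right multiplication by column maps, and hence by the semigroup $S_\theta$ they generate.

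\textbf{Invariance under $\Hh_{-+}\cup\Hh_+$.} Let $(L,R)\in S^{(2)}_\theta$ and $h\in\Hh_+$. By the previous paragraph, $(Lh,Rh)$ is consecutive. It remains to check that $Lh\in\Hh_{-+}$ and $Rh\in\Hh_+$.
\begin{itemize}
\item $Rh\in\Hh_+$ because $\Hh_+$ is a group.
\item $Lh=Le_+h$ lies in $L_{e_+}$, since $h=he_+$. It lies in $R_{e_-}$ because $L$ does. So $Lh\in\Hh_{-+}$.
\end{itemize}
For $h\in\Hh_{-+}$ the argument is the same: $Rh=e_+Rh$ lies in $R_{e_+}$, and $h\in L_{e_+}$ puts $Rh$ in $L_{e_+}$, so $Rh\in\Hh_+$. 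Similarly $Lh\in R_{e_-}\cap L_{e_+}=\Hh_{-+}$. Throughout, I use the standard Green's-relation facts in the completely simple kernel: $\Hh_{xy}=R_{e_x}\cap L_{e_y}$, and a product $st$ lies in $R_s\cap L_t$.

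\textbf{Bijection with $S^{(2,-)}_\theta$.} Fix $\phi\in\Hh_{+-}$.
\begin{itemize}
\item \emph{The map lands in the right set.} The pair $(L\phi,R\phi)$ is consecutive. Moreover $L\phi\in R_{e_-}\cap L_{e_-}=\Hh_-$ and $R\phi\in R_{e_+}\cap L_{e_-}=\Hh_{+-}$.
\item \emph{The inverse map.} Let $\psi\in\Hh_{-+}$ satisfy $\phi\psi=e_+$. Such a $\psi$ exists: $\phi\,\Hh_{-+}$ sits inside $\Hh_+$ and $\rho$-multiplication is a bijection between these $\Hh$-classes, or one can take $\psi=\phi^{-1}$ in the sense of the commuting diagram. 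Then $(L',R')\mapsto(L'\psi,R'\psi)$ sends consecutive pairs to consecutive pairs. It maps $\Hh_-\times\Hh_{+-}$ into $\Hh_{-+}\times\Hh_+$, by the same $R/L$-class bookkeeping.
\item \emph{The two maps are mutually inverse.} First, $L\phi\psi=Le_+=L$ and $R\phi\psi=R$, since $L,R\in L_{e_+}$ means $e_+$ is a right identity for them. In the other direction, $\psi\phi$ is the identity $e_-$ of $\Hh_-$, and $e_-$ acts as a right identity on $L_{e_-}$. This gives $L'\psi\phi=L'$ and $R'\psi\phi=R'$.
\end{itemize}

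\textbf{Main obstacle.} The only delicate point is that the multiplier must itself lie in $S_\theta$ for the consecutiveness argument to apply. Since $\Hh_\pm,\Hh_{\pm\mp}$ are defined as subsets of $S_\theta$, this holds. Also, $S_\theta$ must be taken as generated by column maps, so that closure under right multiplication by arbitrary elements of $S_\theta$ follows by induction on word length from the single-column case. I would verify that inductive step carefully, together with the existence of $\psi$ with $\phi\psi=e_+$ and $\psi\phi=e_-$ inside the kernel.
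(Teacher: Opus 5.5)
Your proof is correct and follows essentially the same route as the paper: closure of consecutive pairs under diagonal right multiplication by column maps (hence by all of $S_\theta$), the $\mathcal{R}$/$\mathcal{L}$-class bookkeeping, and an inverse given by right multiplication with some $\psi\in\Hh_{-+}$, which the paper writes explicitly as $\psi=e_-\phi^{-1}e_+$. Two small points. First, the normal inverse $\phi^{-1}$ itself lies in $\Hh_{+-}$, so it is not a valid choice of $\psi$; your other argument, that left (not right) multiplication by $\phi$ maps $\Hh_{-+}$ bijectively onto $\Hh_+$, is the correct one. Second, $\psi\phi=e_-$ follows at once from $(\psi\phi)^2=\psi e_+\phi=\psi\phi$ in the group $\Hh_-$.
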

\begin{proof} Let $(L,R)$ consecutive elements in $\Hh_{-+}\times \Hh_{+}$. By definition 
there are $n>0$ and $0 < i < \ell^n$ such that $L=\theta^n_{i-1}$ and $R=\theta^n_{i}$. Let $\theta_j$ be a column map. By construction $L\theta_j$ and $R\theta_j$ are consecutive. Iteratively we find that 
 $L\phi$ and $R\phi$ are consecutive for any $\phi\in S_\theta$. Now if $\phi\in \Hh_{-+}\cup \Hh_{+}$ then $L\phi \in \Hh_{-+}$  and $R\phi \in \Hh_{+}$. On the other hand, if $\phi\in \Hh_{+-}$ then  $L\phi\in \Hh_{-}$ and $R\phi\in \Hh_{+-}$. We thus see that then $(L,R) \mapsto (L\phi,R\phi)$ maps $S^{(2)}_\theta$ to $S^{(2,-)}_\theta$. Diagonal right multiplication with $e_-\phi^{-1} e_+$ yields an inverse to this map, indeed, $\phi e_-\phi^{-1} e_+ = e_{+-}e_+ = e_+$ and $e_-\phi^{-1} e_+ \phi= e_- e_{+-} = e_-$.
\end{proof}
\begin{prop} \label{prop-hin}
Let $(\ii ,g)\in I_\theta \times G_\theta$.
Then $\Sfib$ contains the functions 
$f_{(\ii ,g,+ )}$ and $f_{(\ii ,g,-)}$.
\end{prop}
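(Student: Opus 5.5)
The plan is to realise $f_{(\ii,g,\pm)}$ as limits of nets $\sigma^{t_\nu}$ in $E(X_\theta)$, where the $t_\nu$ are obtained from the self-similar structure of the fixed points, and then to restrict the resulting elements to the fibre $\pe^{-1}(0)$.

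The starting observation is that for a fixed point $a\cdot b$ and any $n$ and $0\le i<\ell^n$, the point $\sigma^{i}\theta^n(a\cdot b)$ has, around the origin, the letters $(\theta^n)_{i-1}(a)$ and $(\theta^n)_i(b)$ (using $(\theta^n)_{-1}$ as the last column applied to $a$ when $i=0$). More precisely, the word of length roughly $2\ell^m$ around the origin of $\sigma^{i}\theta^{n}(a\cdot b)$, with $i=j\ell^{m}$ and $m<n$, is $\theta^m$ applied to the pair $(\theta^{n-m})_{j-1}(a)(\theta^{n-m})_j(b)$.

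Take first the sign $+$. By definition of $I_\theta$ we have $\ii=LR^{-1}$ with $(L,R)\in S^{(2)}_\theta$ consecutive, so $L=(\theta^n)_{i-1}$ and $R=(\theta^n)_i$. By Lemma~\ref{lem-neg} we may multiply on the right by $R^{-1}g$, which lies in $\Hh_+$, and obtain consecutive elements $(LR^{-1}g, e_+g)=(\ii g,g)$ at some level $n'$ and position $i'$. I will need these at arbitrarily deep levels with the same column pair. I intend to get this by composing with $\theta_0$ on the right, since $e_+$ is idempotent and fixes the pair, and by composing on the left as in the expansion \eqref{expansion}: the pair sitting at position $i\ell^{k}$ in $\theta^{n+k}$ has columns $(\ii g\,e_-^{?}, g e_+)$. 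Working this out carefully, with $e_-=\theta_{\ell-1}$ and $e_+=\theta_0$ acting as the last and first columns of $\theta^k$, gives a sequence $t_k=i\ell^{k}$ with $t_k\to$ a fixed element of $\Z_\ell$.

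Because the letters $\ii g(b)$ and $g(b)$ lie in $\im e_-$ and $\im e_+$ respectively, the points $\sigma^{t_k}(a\cdot b)$ agree with $\theta^{k}(\ii g(b)\cdot g(b))$ on a window of size $\ell^{k}$. They therefore converge to the fixed point $\ii g(b)\cdot g(b)$.

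The subtle step is that the limit must not depend on $a$ for the $+$ case. For this I would choose $i$ so that the approach is from the right side: one shifts into the middle of a level-$n$ supertile, so that the left letter is itself determined by $b$ through $L R^{-1}$. Formally, the left column $(\theta^{n'})_{i'-1}$ applied to $a$ equals $\ii g e_+(\cdot)$ composed appropriately. Here I expect to have to replace $a$ by the image of $b$ under the column structure, which is the point where quasi-bijectivity and the group $\Hh_{-+}$ enter; this is the main obstacle.

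Passing to a subnet, $\sigma^{t_k}$ converges in $E(X_\theta)$ to some $f$. Its $\tpe$-image is $\lim t_k=0$, because $\ell^k\to0$ in $\Z_{\bar\ell,h}$ after choosing $i'$ divisible by $h$, or after replacing $t_k$ by multiples. Hence $f$ preserves the MEF fibres. Multiplying $f$ by a minimal idempotent on the left puts it in $\M^{fib}(X_\theta)$, and since its restriction to $\pe^{-1}(0)$ is the stated map, $f_{(\ii,g,+)}\in\Sfib$.

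For the sign $-$, I use the bijection of Lemma~\ref{lem-neg} with $\phi$: the pairs $(L\phi,R\phi)\in S^{(2,-)}_\theta$ are consecutive and lie in $\Hh_-\times\Hh_{+-}$, and the dependence is now on $a$ through $\phi$. Taking $t_k=-(\ell^{k}-i)$-type positions, so that the approach is from the left, the same argument yields the limit $\ii g\phi(a)\cdot g\phi(a)$.
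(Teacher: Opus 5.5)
Your strategy is the paper's: use Lemma~\ref{lem-neg} to realise $(\ii g,g)$ as a consecutive pair $((\theta^n)_{i-1},(\theta^n)_i)$, look at the shifts $\sigma^{i\ell^k}(a\cdot b)$, and take a limit point in $E(X_\theta)$. As written, however, the proposal leaves open the step you call the ``main obstacle''. That obstacle does not exist; it comes from a wrong starting observation. Consecutive pairs have $1\le i\le \ell^n-1$, and $a\cdot b=\theta^n(a\cdot b)$ carries the block $\theta^n(b)$ on $[0,\ell^n-1]$. So the two letters at positions $i-1,i$ are $(\theta^n)_{i-1}(b)\,(\theta^n)_i(b)=\ii g(b)\,g(b)$. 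Your formula with $(\theta^n)_{i-1}(a)$ is correct only for $i=0$. Nothing has to be ``replaced'', and quasi-bijectivity plays no role here.

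To keep the same pair at deeper levels, the idempotents compose on the left, not on the right as in your $\ii g\,e_-^{?}$. In $\theta^{k+n}=\theta^k\theta^n$ the columns at positions $i\ell^k-1$ and $i\ell^k$ are $(\theta^k)_{\ell^k-1}(\theta^n)_{i-1}=e_-\ii g=\ii g$ and $(\theta^k)_0(\theta^n)_i=e_+g=g$. Hence $\sigma^{i\ell^k}(a\cdot b)_{[-\ell^k,\ell^k-1]}=\theta^k(\ii g(b))\,\theta^k(g(b))$, which gives convergence to $\ii g(b)\cdot g(b)$ independently of $a$. For the sign $-$, the correct shifts are $(i-\ell^n)\ell^k$, where the pair $(\ii g\phi,g\phi)$ sits at offset $i$ inside the block $\theta^n(a)$ on $[-\ell^n,-1]$. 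Shifts of the form $i-\ell^k$ do not give growing windows carrying the same pair.

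Two further steps are wrong as stated, though neither is needed.
\begin{enumerate}
\item When $h>1$, $\ell^k\not\to 0$ in $\Z_{\bar\ell,h}$, since its image in $\Z/h\Z$ is a unit. Replacing $t_k$ by multiples would also destroy the column pair. Fibre preservation follows instead because the limit $f$ maps the fixed points, i.e.\ $\pe^{-1}(0)$, to fixed points, and an element of $E(X_\theta)$ preserving one $\pe$-fibre preserves all of them.
\item Left multiplication by an arbitrary minimal idempotent changes the restriction of $f$ to $\pe^{-1}(0)$, so it does not preserve the formula you want. Membership in the kernel comes for free from $E(X_\theta)=\M(X_\theta)\sqcup\Z$: a fibre-preserving element other than the identity already lies in $\M^{fib}(X_\theta)$.
\end{enumerate}
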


\begin{proof}
Let $(\ii ,g)\in I_\theta \times G_\theta$.
We claim that $(\ii g,g)\in S^{(2)}_\theta$. Indeed, as $\ii\in I_\theta$ there are 
$(L',R')\in S^{(2)}_\theta$ such that $\ii = L'{R'}^{-1}$. Thus
  $\ii g = L'{R'}^{-1}g = L'(g^{-1} R')^{-1}$. Clearly, $g = R' (g^{-1} R')^{-1}$ and our claim follows from Lemma~\ref{lem-neg}.
There are thus $n>0$ and $1\leq i\leq \ell^n-1$ such that
$ \ii g =(\theta^n)_{i-1}\in \Hh_{-+}$ and $g =(\theta^n)_i\in \Hh_+$.
 
Let $a\cdot b$ be a fixed point. As $\theta$ is simplified we have $e_{-}(a) = a$ and $e_{+}(b) = b$.
The two-letter word $\sigma^{i}(a\cdot b)_{[-1,0]}$ is $(\theta^n)_{i-1}(b)(\theta^n)_i(b)$. Furthermore 
the expansion of 
$\theta^{k+1}\theta^n$ contains $\theta^k\theta_{\ell-1} (\theta^n)_{i-1}| \theta^k\theta_0 (\theta^n)_{i}$ at positions  
 $[\ell^{k+1}i- \ell^k,\ell^{k+1}i+\ell^k-1]$.
Hence 
$$\sigma^{i\ell^{k+1}}(a\cdot b)_{[-\ell^k,\ell^k-1]} =\theta^k (\theta_{\ell-1} \ii g(b)) \,\theta^k (\theta_0  g(b)) = \theta^k (\ii g(b))\, \theta^k (g(b))$$
(the r.h.s.\ is the concatenation of two words of length $\ell^k$)  
the second equality following from $\theta_{\ell-1} \ii=e_-\ii = \ii$ and $ \theta_0 g=e_+ g = g$. 
It follows that    
$$\sigma^{i \ell^{k+1}}(a\cdot b)   \stackrel{k\to +\infty}\longrightarrow 
\ii g(b)\cdot g(b)$$
in the topology of $X$.
By compactness there exists a map $f_{(\ii ,g,+)}\in E(X)$ which agrees with the map 
$a\cdot b\mapsto \ii g(b)\cdot g(b)$ on the fibre $\pe^{-1}(0)$. An element of $E(X)$ either preserves all $\pe$-fibres or none. Hence 
$f_{(\ii ,g,+)}(a\cdot b)\in \Ef(X)$.

To construct elements in $E(X)$ which act like $f_{(\ii ,g,-)}$, we observe that, by Lemma~\ref{lem-neg}, this is equivalent to finding elements which acts like  
$a\cdot b\mapsto L(a)\cdot R(a)$ on $\pe^{-1}(0)$ where 
$(L,R)\in S^{(2,-)}_\theta$. This can be done as above, the only difference is that we take 
$i' = i-\ell^{n}$ with $n$ and $i$ as above. Then the two-letter word $\sigma^{i'}(a\cdot b)_{[-1,0]}$ is $L(a)\,R(a)$ and we find 
$\sigma^{i' \ell^k}(a\cdot b)   \stackrel{k\to +\infty}\longrightarrow 
L(a)\cdot R(a)$.  
By compactness, we obtain a function $f_{(\ii ,g,-)}\in \Ef(X)$ which restricts on the fixed point fibre to $a\cdot b\mapsto L(a)\cdot R(a) = \ii g \phi(a) \cdot g\phi(a)$.
\end{proof}

Recall the map $\Phi$ defined in Theorem~\ref{thm-RMG}.
\begin{prop}
The map $\Phi$ is a semigroup morphism.
\end{prop}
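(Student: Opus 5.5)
We must show that $f_{(\ii,g,\epsilon)} f_{(\jj,h,\delta)} = f_{(\ii,g,\epsilon)(\jj,h,\delta)}$ holds on $\pe^{-1}(0)$. In $M(I_\theta,G_\theta,\{\pm\};A_\phi)$ the product is $(\ii,g,\epsilon)(\jj,h,\delta)=(\ii, g\,a_{\epsilon\jj}\,h,\delta)$, with $a_{+\jj}=e_+$ and $a_{-\jj}=\phi\jj$. Both sides are maps on the fibre $\pe^{-1}(0)$, whose points are the fixed points $a\cdot b$ with $a\in\im e_-$ and $b\in\im e_+$. It therefore suffices to compose the explicit formulas of Theorem~\ref{thm-RMG} and compare them pointwise. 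I will split into four cases according to $(\epsilon,\delta)\in\{\pm\}^2$. The key simplification is that the second factor only determines which letter is fed in, namely $b$ if $\delta=+$ and $\phi(a)$ if $\delta=-$. So the argument reduces to two computations, according to $\epsilon$.

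First compute $f_{(\jj,h,\delta)}(a\cdot b)=\jj h(c)\cdot h(c)$, where $c=b$ or $c=\phi(a)$. Now apply $f_{(\ii,g,\epsilon)}$ to the fixed point $a'\cdot b'$ with $a'=\jj h(c)$ and $b'=h(c)$. One should check that this is indeed a fixed point. Since $\jj\in\Hh_{-+}$, we have $e_-\jj=\jj$, so $a'\in\im e_-$. Since $h\in\Hh_+$, we have $b'\in\im e_+$. The word $a'b'$ is allowed because $(\jj h,h)$ is consecutive, by the claim in the proof of Proposition~\ref{prop-hin}.

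If $\epsilon=+$, the result is $\ii g(b')\cdot g(b')=\ii g h(c)\cdot gh(c)$. This equals $f_{(\ii,ge_+h,\delta)}(a\cdot b)$, since $e_+h=h$. This matches $a_{+\jj}=e_+$.

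If $\epsilon=-$, the result is $\ii g\phi(a')\cdot g\phi(a')=\ii g\phi\jj h(c)\cdot g\phi\jj h(c)$. This is exactly $f_{(\ii,g\phi\jj h,\delta)}(a\cdot b)$, which matches $a_{-\jj}=\phi\jj$. Here I must confirm that $g\phi\jj h$ lies in $G_\theta=\Hh_+$. This holds because $\phi\in\Hh_{+-}$ and $\jj\in\Hh_{-+}$, so $\phi\jj\in e_+Se_+=\Hh_+$ by the relations listed in Section~\ref{preliminaries}.

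The only delicate point is that the elements $f_{(\cdot)}$ were defined as elements of $\Sfib$ specified by their restriction to $\pe^{-1}(0)$. Multiplicativity must therefore be understood for the restrictions, which compose correctly because elements of $\Ef$ preserve the fibre $\pe^{-1}(0)$. Since $\Sfib$ consists precisely of these restrictions, checking pointwise equality on fixed points suffices, and no further obstacle is expected beyond careful bookkeeping with the idempotent relations.
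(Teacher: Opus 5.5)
Your proof is correct and is essentially the paper's: both compute the matrix-semigroup products $(\ii,g,\pm)(\ii',g',\pm)$ and check them against the composition of the explicit formulas of Theorem~\ref{thm-RMG} evaluated on the fixed points $a\cdot b$. Where you differ, you do so only in presentation. Grouping by the input letter ($b$ or $\phi(a)$) folds the paper's four cases into two. You also make explicit two points the paper leaves implicit: that the intermediate point is a fixed point, and that $g\phi\ii' g'\in G_\theta$.
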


\begin{proof}  Let $(\ii,g)\in I_\theta\times G_\theta$.
We have 
\begin{eqnarray*}
(\ii,g,+)(\ii',g',\pm) &=& (\ii,ge_+ g',\pm) = (\ii,g g',\pm)\\
(\ii,g,-)(\ii',g',\pm) &=&  (\ii,g \phi\ii' g',\pm)
\end{eqnarray*}
The image under $\Phi$ of the elements of the r.h.s.\ is given by 
\begin{eqnarray*}
f_{(\ii ,gg',+)}(a\cdot b) &=& \ii gg' (b)\cdot gg'(b)\\
f_{(\ii ,gg',-)}(a\cdot b) &=& \ii gg' \phi(a)\cdot g g'\phi(a)\\
f_{(\ii ,g\phi\ii' g',+)}(a\cdot b) &=& \ii g\phi\ii' g' (b)\cdot g \phi\ii' g'(b)\\
f_{(\ii ,g\phi\ii' g',-)}(a\cdot b) &=& \ii g\phi\ii' g' \phi(a)\cdot g \phi\ii' g'\phi(a)\end{eqnarray*}
$\Phi$ applied to the factors of the l.h.s.\ yields
\begin{eqnarray*}  f_{(\ii ,g,+)}f_{(\ii' ,g',+)}(a\cdot b)   &=& 
f_{(\ii ,g,+)}(\ii' g'(b)\cdot g'(b))= \ii gg'(b)\cdot gg'(b)\\
f_{(\ii ,g,+)}f_{(\ii' ,g',-)}(a\cdot b)   &=& 
f_{(\ii ,g,+)}(\ii' g'\phi(a)\cdot g'\phi(a))\\
& =& \ii gg'\phi(a)\cdot gg'\phi(a)\\
f_{(\ii ,g,-)}f_{(\ii' ,g',+)}(a\cdot b)   &=& 
f_{(\ii ,g,-)}(\ii' g'(b)\cdot g'(b))\\
& =& \ii g \phi \ii' g'(b)\cdot g \phi \ii' g'(b)\\
f_{(\ii ,g,-)}f_{(\ii' ,g',-)}(a\cdot b)   &=& 
f_{(\ii ,g,-)}(\ii' g'\phi(a)\cdot g'\phi(a))\\
& =& \ii g\phi\ii' g' \phi(a)\cdot g \phi\ii' g'\phi(a)\end{eqnarray*}
\end{proof}
\begin{prop}\label{prop-rueck}
For any $\varphi\in \Sfib$ there exists $(\ii,g)\in I_\theta\times G_\theta$ such that $\varphi(a\cdot b) = \ii g(b)\cdot g(b)$ or $\varphi(a\cdot b) = \ii g\phi(a)\cdot g\phi(a)$.
\end{prop}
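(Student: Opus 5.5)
Elements of $\Sfib$ are restrictions to $\pe^{-1}(0)$ of elements $f\in\M^{fib}(X_\theta)$, and every such $f$ is a limit of a net $\sigma^{n_\nu}$ with $n_\nu\to 0$ in $\Z_{\bar\ell,h}$ (because $f$ preserves fibres). The plan is to analyse such nets combinatorially through the $\ell$-adic expansion of $n_\nu$, and to show that on the fixed points the limit can only act in one of the two stated forms.

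\textbf{Step 1: reduce to consecutive column maps.} Fix a fixed point $a\cdot b$ and a large $k$. Since $n_\nu\to 0$ $\ell$-adically, for $\nu$ large $n_\nu\equiv 0 \bmod \ell^k$, so $n_\nu=j\ell^k$ for some integer $j$. The window $\sigma^{n_\nu}(a\cdot b)_{[-\ell^k,\ell^k-1]}$ is then the concatenation $\theta^k(c)\theta^k(d)$, where $cd$ is the two-letter word of $a\cdot b$ at positions $j-1,j$. If $j>0$, choose $n$ with $\ell^n>j$; then $cd=(\theta^n)_{j-1}(b)\,(\theta^n)_j(b)$. If $j\le 0$, the analogous expression reads the letters from $a$ via the expansion of $\theta^n$ on the left, giving $cd=L(a)R(a)$ with $L=(\theta^n)_{\ell^n+j-1}$ and $R=(\theta^n)_{\ell^n+j}$.

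\textbf{Step 2: obtain a single pair of column maps.} Since $S_\theta$ is finite, we pass to a subnet along which the sign of $j$ is constant and the pair $(L,R)$ of consecutive column maps is constant. Because the pair is independent of the fixed point, the limit must act as $a\cdot b\mapsto \theta^\infty(L(\cdot))\cdot\theta^\infty(R(\cdot))$ applied to $b$ (respectively $a$).

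\textbf{Step 3: use minimality of the kernel to land in the right groups.} The map $f$ lies in the kernel, and the images must be fixed points. Precomposing with $\theta^\infty$-stabilisation, i.e. with high powers of $e_\pm$ (which act as the identity on the letters of fixed points), lets us replace $L,R$ by $e_-Le_\pm$ and $e_+Re_\pm$, both of which are consecutive by Lemma~\ref{lem-neg}. For $j>0$ this places $(L,R)$ in $\Hh_{-+}\times\Hh_+$, hence in $S^{(2)}_\theta$. We then set $g=R$ and $\ii=LR^{-1}\in I_\theta$, giving $\varphi(a\cdot b)=\ii g(b)\cdot g(b)$. For $j\le0$ we get $(L,R)\in S^{(2,-)}_\theta$, and Lemma~\ref{lem-neg} writes $(L,R)=(L'\phi,R'\phi)$ with $(L',R')\in S^{(2)}_\theta$; this yields the second form.

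\textbf{Main obstacle.} The hard part is Step 3: justifying that the limiting column maps can be taken in the groups $\Hh_{-+},\Hh_+$ rather than being arbitrary rank-$c$ maps. I would argue that $R(b)$ must be a letter in $\im e_+$ (it is the right letter of a fixed point), so on the relevant letters $R=e_+R$. Similarly $L=e_-L$. On the domain side, $b\in\im e_+$ gives $R=Re_+$ on the relevant letters. Rank considerations in the completely simple semigroup of rank-$c$ maps then identify $e_+Re_+\in\Hh_+$.
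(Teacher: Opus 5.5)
Your proposal is correct and is essentially the paper's argument. You approximate $f$ by shifts of one sign, use finiteness of $\pe^{-1}(0)$ to read the $[-1,0]$-word of every image off one pair of consecutive column maps $(L,R)$, use that the images are fixed points to put the pair into $\Hh_{-+}\times\Hh_+$, and invoke Lemma~\ref{lem-neg} for negative shifts.

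A few small repairs are needed:
\begin{enumerate}
\item The $\ell$-adic windows are unnecessary, since a fixed point is determined by its $[-1,0]$-word.
\item The case $j=0$ must be excluded. It means $n_\nu=0$, which cannot happen frequently because $f\neq\Id$.
\item The consecutiveness of $(e_-L,e_+R)$ does not come from Lemma~\ref{lem-neg}, which only covers diagonal right multiplication. It comes from $\theta_{\ell-1}(\theta^n)_{j-1}=(\theta^{n+1})_{\ell j-1}$ and $\theta_0(\theta^n)_{j}=(\theta^{n+1})_{\ell j}$. The paper sidesteps this by taking $n$ large so that $L=Le_+$ and $R=Re_+$, from which $e_-L=L$ and $e_+R=R$ follow outright.
\end{enumerate}
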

\begin{proof} Any $\varphi\in \Sfib\subset \Ef_0(X)$ is the restriction of a function $f\in \Ef(X)$ to $\pe^{-1}(0)$. We consider first the case that this function belongs to $E(X,\Z^+)$,
that is, $f$ is a pointwise limit of a generalised sequence $(\sigma^{\nu_i})_i$ with $\nu_i> 0$ (the inequality is strict as $f\neq \Id$). Let $ab\in \mathcal A^{(2)}$ be an allowed two-letter word of $\Aa$ which defines a fixed point for $\theta$.  Note that $\varphi(a\cdot b)_{[-1,0]}$ is an open neighbourhood of $\varphi(a\cdot b)$ in $X$. Thus given $a\cdot b\in\pe^{-1}(0)$ there exists a 
$i_0$ such that $\varphi(a\cdot b)_{[-1,0]} = \sigma^{\nu_i}(a\cdot b)_{[-1,0]}$ for $i\geq i_0$. As $\pe^{-1}(0)$ is finite there exists a 
$\nu>0$ such that $\varphi(a\cdot b)_{[-1,0]} = \sigma^{\nu}(a\cdot b)_{[-1,0]}$  for all $a\cdot b\in\pe^{-1}(0)$. Let 
 $L=(\theta^n)_{\nu-1}$ and $R=(\theta^n)_{\nu}$ where $\ell^n\geq \nu$. Then, for all  $a\cdot b\in\pe^{-1}(0)$ we have $\varphi(a\cdot b)_{[-1,0]} = 
L(b) R(b)$. Since the fixed points are uniquely determined by their two-letter word on $[-1,0]$ we have $\varphi(a\cdot b) = L(b) \cdot R(b)$. 
As $\nu>1$ we have $L = Le_+$ and $R=R e_+$. As $L(b) \cdot R(b)$ is a fixed point we have $e_-L=L$ and $e_+R = R$. Hence $(RL^{-1},R)\in I_\theta\times G_\theta$.

If $f \in E^-(X)$ we argue similarly using a generalised sequence $(\sigma^{\nu_i})_i$ to approximate $f_-$ but with $\nu_i<0$.  
This leads to  $\varphi(a\cdot b)_{[-1,0]} = L(a)\cdot R(a)_{[-1,0]}$ with $(L,R)\in S^{(2,-)}_\theta$ which we can write as $(L'\phi,R'\phi)$ by Lemma~\ref{lem-neg}. We therefore get the second equation for $\phi$.
\end{proof}


\begin{example}[Rudin-Shapiro substitution] \label{ex:RS} A simplified form of
the Rudin-Shapiro substitution $\theta$ is given by  
\[
\begin{array}{c} a \mapsto\\ b\mapsto\\ c\mapsto\\ d\mapsto \end{array} 
\begin{array}{c} a\,\\ d\,\\ a\,\\ d\, \end{array}
\!\!\!\!\!\!{\begin{array}{c} c\,\\ b\,\\ c\,\\ b\, \end{array}}
\!\!\!\!\!\!{\begin{array}{c} a\,\\ a\,\\ d\,\\ d\, \end{array}}
\!\!\!\!\!\!{\begin{array}{c} b\,\\ b\,\\ c\,\\ c. \end{array}}
\]
We use the notation $\begin{pmatrix}
           \alpha \\
           \beta \\
           \gamma \\ \delta
          \end{pmatrix} $
to denote the bijection that sends $a$ to $\alpha$, $b$ to $\beta$, $c$ to $\gamma$ and $d$ to $\delta$.
The expansion of $\theta$ is $\theta_0|\theta_1|\theta_2|\theta_3$ with \[\theta_0 = \begin{pmatrix} a\\d\\a\\d \end{pmatrix}=:e_+,  \, 
\theta_1 =  
\begin{pmatrix} 
c\\b\\c\\b \end{pmatrix}, \,\theta_2 =  \begin{pmatrix} a\\a\\d\\d \end{pmatrix} , \,\theta_3 =  \begin{pmatrix} b\\b\\c\\c \end{pmatrix}=:e_-.\]
It follows that $\theta_1 = e_{-+}$ and $\theta_2=e_{+-}$. Furthermore,
\[S_\theta = \left\{  \begin{pmatrix} a\\d\\a\\d \end{pmatrix}, \begin{pmatrix} d\\a\\d\\a \end{pmatrix},   \begin{pmatrix} a\\a\\d\\d \end{pmatrix},  \begin{pmatrix} d\\d\\a\\a \end{pmatrix}, \begin{pmatrix} 
b\\c\\b\\c \end{pmatrix}, \begin{pmatrix} 
c\\b\\c\\b \end{pmatrix},  \begin{pmatrix} b\\b\\c\\c \end{pmatrix},  \begin{pmatrix} c\\c\\b\\b \end{pmatrix}  \right\}.\]
Consecutive elements in $\Hh_{-+}\times \Hh_+$ are $\theta_1\theta_0|\theta_2\theta_0$, $\theta_3\theta_0|\theta_0\theta_1$ and many more, but these suffice to determine that
$S^{(2)}_\theta$ contains $e_{-+}$ and $e_-e_+$ and therefore equals $\Hh_{-+}$. We choose $\phi = e_+ e_-$. With that choice
$a_{-\,e_{-+}} = e_+ e_- e_{-+} = e_+$ and $a_{-\,e_{+}} = e_+ e_- e_{+} =\tau$, the non-trivial element of $\Hh_{-+}\cong S_2$. Hence $\Sfib$ is isomorphic to the semigroup $\mathcal M_2$ which we discussed in Example~\ref{ex:ex}.
\end{example}

\begin{example}[Thue-Morse substitution] \label{ex:TM} A simplified version of the  Thue-Morse substitution is given by
\[
\begin{array}{c} a \mapsto\\ b\mapsto\end{array} 
\begin{array}{c} a\,\\ b \end{array}
\!\!\!\!\!\!{\begin{array}{c} b\,\\ a \end{array}}
\!\!\!\!\!\!{\begin{array}{c} b\,\\ a \end{array}}
\!\!\!\!\!\!{\begin{array}{c} a\,\\ b \end{array}}
\]
This is a bijective substitution. We computed $\Sfib$ in \cite{ESBS}. It turns out that the result is also isomorphic to $\mathcal M_2$. Hence the Rudin-Shapiro shift and the Thue-Morse shift have isomorphic $\M_0^{fib}$ and conjugate equicontinuous factor. It follows from Corollary~\ref{cor-main-qbij} that their Ellis semigroups are algebraically isomorphic. 
\end{example}
\newpage
\section{Notation glossary}\label{sec:notation glossary}

\begin{table}[h]
\begin{tabular}{|l|l|}
\hline
 $\pe:(X,\sigma)\to (\Xe,\delta)$ &  The maximal  equicontinuous factor of $(X,\sigma)$  \\ \hline
 $\aut=\aut(X,\sigma)$ &          The automorphism group of $(X,\sigma)$           \\ \hline
  $\aut^{fib}\subset \aut$ 
& the subgroup of $\aut$ of elements which preserve the $\pe$-fibres. \\ \hline
$\aute:=\{\alpha_{eq}|\alpha\in \aut\}$&  the corresponding automorphisms of $\Xe$  \\ \hline
  $cr=cr(X)$ & the coincidence rank of $(X, \sigma)$  \\ \hline
 $\Ff(X)$  
&  the set of functions from $X$ to itself. \\ \hline
$\ker(E)$ &  the minimal bilateral ideal of $E$  \\ \hline
 $\Gg$ & the Rees structure group of $\ker(E)$ \\ \hline
$\Gamma$ & the little structure group of $E$  \\ \hline
 $\ev_x (f)$
&   evaluation of $f$ at the point $x$ \\ \hline
$\phi_*: E(X) \to E(Y)$  & the factor map induced by  $\phi:(X, \sigma)\to (X',\sigma')$ \\ \hline
 $L^{fib}$ & $\{ f\in L: \widetilde\pe(f)=0\}$  \\ \hline
  $E^{fib} $
& $ \{ f\in E: \widetilde\pe(f)=0\}$      \\ \hline
$\tpe $ &  $\ev_0\circ {\pe}_*:L\to \Xe.$  \\ \hline
 $\Gg^{fib}$ & $\Gg\cap L^{fib}$.  \\ \hline
 $ \Gg^{\xi} $ &   $ \{f\in \Gg: \widetilde\pe (f) = \xi\} $\\ \hline
  $\overline{\text{ncl}_G(H)}$ &  the topological  normal closure of $H$ in $G$  \\ \hline
 $J_L$ & the (minimal) idempotents of $L$ \\ \hline
 $\Xe^{sing}$ &  the set of singular fibres in $\Xe$
  \\ \hline
 $\Xe^{reg}$ &   the set of regular fibres in $\Xe$  \\ \hline
  $X_0$
 & $e\pe^{-1}(\Xe^{sing})$.  \\ \hline
  $\aut_0$ & the restriction of $\aut$ to $X_0$  \\ \hline
 $\res_{A}(f)$ & the restriction of $f$ to the set $A$  \\ \hline
  $\Gf_\xi$ & the restriction of $\Gf$ to $e\pe^{-1}(\xi)$  \\ \hline
  $\Gamma_\xi$ & the restriction of the little structure group $\Gamma$ to $e\pe^{-1}(\xi)$.
  \\ \hline
 $\Gg^{sing}$ &$\{ f\in \Gg: \widetilde{\pe}(f) \in {\mathfrak a}_{eq}(0)\}.$  \\ \hline
  $ \Gg_\xi^{sing} $ & $\res_{e\pe^{-1}  (   {\mathfrak a}_{eq} +\xi      )} (\Gg^{sing}) $  \\ \hline
\end{tabular}
\end{table}

\section*{Acknowledgments}
We thank Lorenzo Sadun for sharing with us preliminary results which led to several examples in Section~\ref{sec:coverings}.

\providecommand{\bysame}{\leavevmode\hbox to3em{\hrulefill}\thinspace}
\providecommand{\MR}{\relax\ifhmode\unskip\space\fi MR }
\providecommand{\MRhref}[2]{%
  \href{http://www.ams.org/mathscinet-getitem?mr=#1}{#2}
}
\providecommand{\href}[2]{#2}

\end{document}